\documentclass[reqno,11pt]{amsart}
\usepackage[letterpaper, margin=1in]{geometry}
\RequirePackage{amsmath,amssymb,amsthm,graphicx,mathrsfs,url,slashed,subcaption,empheq}
\RequirePackage[usenames,dvipsnames]{xcolor}
\RequirePackage[colorlinks=true,linkcolor=Red,citecolor=Green]{hyperref}
\RequirePackage{amsxtra}
\usepackage{cancel}
\usepackage{tikz-cd, wrapfig, enumitem}

\makeatletter
\@namedef{subjclassname@2020}{\textup{2020} Mathematics Subject Classification}
\makeatother

\title{The canonical lamination calibrated by a cohomology class}
\author{Aidan Backus}
\address{Department of Mathematics, University of Toronto}
\email{aidan.backus@utoronto.ca}
\date{\today}
\thanks{}
\keywords{laminations, minimal hypersurfaces, calibrations, functions of least gradient, stable norm, Thurston asymmetric metric}
\subjclass[2020]{primary: 49Q05; secondary: 53C38, 37F34}

\newcommand{\NN}{\mathbf{N}}
\newcommand{\ZZ}{\mathbf{Z}}
\newcommand{\QQ}{\mathbf{Q}}
\newcommand{\RR}{\mathbf{R}}
\newcommand{\CC}{\mathbf{C}}

\newcommand{\PP}{\mathbf P}

\newcommand{\Sph}{\mathbf S}

\newcommand{\Ball}{\mathbf{B}}

\newcommand*\dif{\mathop{}\!\mathrm{d}}

\DeclareMathOperator{\card}{card}
\DeclareMathOperator{\dist}{dist}
\DeclareMathOperator{\id}{id}
\DeclareMathOperator{\Hom}{Hom}

\DeclareMathOperator{\PD}{PD}
\DeclareMathOperator{\supp}{supp}

\DeclareMathOperator{\tr}{tr}

\newcommand{\Two}{\mathrm{I\!I}}
\newcommand{\Ric}{\mathrm{Ric}}

\newcommand{\weakto}{\rightharpoonup}

\newcommand{\normal}{\mathbf n}

\newcommand{\vol}{\mathrm{vol}}

\newcommand{\diam}{\mathrm{diam}}
\DeclareMathOperator{\Gal}{Gal}
\DeclareMathOperator{\arcosh}{arcosh}

\newcommand{\Lip}{\mathrm{Lip}}

\newcommand{\Riem}{\mathrm{Riem}}

\newcommand{\Mass}{\mathbf M}

\newcommand{\dfn}[1]{\emph{#1}\index{#1}}

\newcommand{\loc}{\mathrm{loc}}
\newcommand{\cpt}{\mathrm{cpt}}

\newtheorem{theorem}{Theorem}[section]

\newtheorem{lemma}[theorem]{Lemma}

\newtheorem{corollary}[theorem]{Corollary}
\newtheorem{conjecture}[theorem]{Conjecture}

\theoremstyle{definition}
\newtheorem{definition}[theorem]{Definition}

\newtheorem{example}[theorem]{Example}

\newtheorem*{ack}{Acknowledgments}

\makeatletter

\makeatother

\newlist{prfenum}{enumerate}{1}
\setlist[prfenum]{
  nosep,
  label=(\theprfenum--\arabic*),
}
\newcounter{prfenum}
\AddToHook{env/proof/begin}{\setcounter{prfenum}{0}}
\AddToHook{env/prfenum/begin}{\stepcounter{prfenum}}

\numberwithin{equation}{section}


\def\XXint#1#2#3{{\setbox0=\hbox{$#1{#2#3}{\int}$ }
\vcenter{\hbox{$#2#3$ }}\kern-.6\wd0}}

\usepackage[backend=bibtex,style=alphabetic,giveninits=true]{biblatex}

\addbibresource{best_curl.bib}
\renewbibmacro{in:}{}
\DeclareFieldFormat{pages}{#1}

\begin{document}
\begin{abstract}
Let $\rho$ be a unit cohomology class of degree $d - 1$, on a closed oriented Riemannian manifold of dimension $2 \leq d \leq 7$. We construct a lamination $\lambda_\rho$ whose leaves are exactly the minimal hypersurfaces calibrated by every calibration in $\rho$. The geometry of $\lambda_\rho$ is closely related to the geometry of the unit ball of $H_{d - 1}(M, \mathbb R)$ when it is equipped with Gromov's stable norm, so our main theorem constrains the shape of the stable unit ball in terms of the topology of $M$. These results establish a close analogy between the stable norm and Thurston's earthquake norm on the tangent space to Teichmüller space.
\end{abstract}

\maketitle

\section{Introduction}
Let $M$ be a closed oriented Riemannian manifold of dimension $2 \leq d \leq 7$.
The \dfn{stable norm} $\|\alpha\|_1$ of a homology class $\alpha \in H_{d - 1}(M, \RR)$ is the infimum of the area of all $d - 1$-cycles representing $\alpha$.
The stable norm was introduced by Federer in his work \cite{Federer1974}, on the duality between area-minimizing currents and calibration cochains.
Among other applications, the stable norm was studied by Gromov, \cite{gromov2007metric}, for its connections to systolic geometry and Brock and Dunfield, \cite{Brock2017}, because of its connection to the Thurston--Gromov simplicial norm.

A quarter-century ago, Auer and Bangert released a research announcement \cite{Auer01}, which proposed to study codimension-$1$ measured oriented laminations $\lambda$ in $M$ which minimize their mass in their homology class $[\lambda] \in H_{d - 1}(M, \RR)$.\footnote{We review the basic definitions related to laminations in \S\ref{sec: lamination review}. In our convention, \emph{all laminations are Lipschitz and nonempty}.}\footnote{Some of Auer and Bangert's work appears in an unpublished manuscript, \cite{Auer12}.}
In codimension $1$, every homology class $\alpha$ can be represented by a mass-minimizing lamination $\lambda$ (whose mass then equals $\|\alpha\|_1$), which one can think of roughly think of as a canonical choice of representative of $\alpha$.
If two laminations have common leaves, those leaves cannot intersect, and Auer and Bangert proposed to use this observation to establish a deep connection between the intersection theory of $M$ and the geometry of the stable unit ball.
A similar approach was used by Balacheff and Massart, \cite{Massart96, Massart2007}, to study the stable unit ball when $M$ is a negatively curved surface. 

While trying to prove the theorems claimed in \cite{Auer01}, it is often useful to imitate ideas of the works \cite{Thurston98,Wolpert82,Gu_ritaud_2017,huang2024earthquakemetricteichmullerspace} of the Thurston school on Thurston's asymmetric metric on Teichm\"uller space.
To make this precise, let $g \geq 2$, let $\Sigma_g$ be the closed oriented surface of genus $g$, let $\mathscr T_g$ be its Teichm\"uller space, let $\rho, \sigma \in \mathscr T_g$ be hyperbolic metrics on $\Sigma_g$, and let $L(\rho, \sigma) \geq 1$ be the infimum of Lipschitz constants of maps $(\Sigma_g, \rho) \to (\Sigma_g, \sigma)$ homotopic to $\id_{\Sigma_g}$.
\dfn{Thurston's stretch metric} on $\mathscr T_g$ is $\log L$.
Thurston's stretch metric is studied using geodesic laminations on $(\Sigma_g, \rho)$, and in particular the \dfn{canonical maximally-stretched lamination} given by the following theorem.

\begin{theorem}[{\cite{Gu_ritaud_2017}}]\label{existence of stretched lam}
For every $g \geq 2$ and $\rho, \sigma \in \mathscr T_g$, there exists a unique largest chain-recurrent geodesic lamination $\lambda_{\rho, \sigma}$ in $(\Sigma_g, \rho)$ such that for every Lipschitz map $f: (\Sigma_g, \rho) \to (\Sigma_g, \sigma)$ homotopic to $\id_{\Sigma_g}$ such that $\Lip(f) = L(\rho, \sigma)$, $f$ stretches every leaf of $\lambda_{\rho, \sigma}$ by a factor of $L(\rho, \sigma)$.
\end{theorem}

The \dfn{costable norm}, $\|\cdot\|_\infty$, on $H^{d - 1}(M, \RR)$ is the dual norm of the stable norm.
The purpose of this paper is to flesh out the idea that the costable norm of $M$ is closely analogous to Thurston's stretch distance in an infinitesimal neighborhood of a hyperbolic metric.
Our main theorem is that for each class of unit costable norm, there is a lamination in $M$ of minimal hypersurfaces which is analogous to Thurston's canonical lamination.
Studying the structure of this lamination yields several of the proposed theorems of \cite{Auer01}.
A sample application of the theory we shall develop is that under purely topological assumptions, $M$ has many uniquely ergodic laminations of minimal hypersurfaces.

A \dfn{calibration} (of codimension $1$) is a closed $d - 1$-form $F$ such that $\|F\|_{L^\infty} = 1$.
For any calibration $F$ on $M$, a hypersurface $N \subset M$ is \dfn{$F$-calibrated} if the pullback of $F$ to $N$ is the area form on $N$.
If $N$ is $F$-calibrated, then the mean curvature of $N$ is $0$, and if $N$ is closed then $N$ minimizes its area in its real homology class.
This brings us to our main theorem:

\begin{theorem}\label{existence of calibrated lam}
For every $\rho \in H^{d - 1}(M, \RR)$ such that $\|\rho\|_\infty = 1$, there is a unique largest lamination $\lambda_\rho$ in $M$ such that for every calibration $F$ representing $\rho$, $F$ calibrates every leaf of $\lambda_\rho$.
\end{theorem}

The lamination $\lambda_\rho$ is the \dfn{canonical lamination} calibrated by $\rho$.
We prove Theorem \ref{existence of calibrated lam} in \S\ref{canonical sec}.
The proof uses multiple results from \cite{BackusCML}, including the interpretation of mass-minimizing laminations in terms of \dfn{functions of least gradient}, functions $u$ which minimize their total variation $\int_M \star |\dif u|$.
We carefully note that the lamination $\lambda_\rho$ is not itself mass-minimizing, since it may not admit any sort of transverse measure and therefore does not have a well-defined homology class.
However, any measured sublamination of $\lambda_\rho$ is mass-minimizing in its homology class.

To illustrate Theorem \ref{existence of calibrated lam}, suppose that $d = 2$, so that we can identify homotopy classes of maps $M \to \Sph^1$ with homomorphisms $\pi_1(M) \to \ZZ$, which in turn can be identified with lattice points in $H^1(M, \RR)$.
Let $\rho$ be such a lattice point; by rescaling $M$ we may assume that $\|\rho\|_\infty = 1$.
In that case, any calibration which represents $\rho$ is the derivative of a minimizing Lipschitz map in the homotopy class $\rho$, and every leaf of the canonical lamination calibrated by $\rho$ is maximally stretched by every minimizing Lipschitz map in $\rho$. 
Thus Theorem \ref{existence of calibrated lam} is a generalization of a version of Theorem \ref{existence of stretched lam} where one works with homotopy classes of maps $M \to \Sph^1$ rather than $[\id_{\Sigma_g}]$.

See \S\ref{sec: high dimension} for a discussion of the possible generalizations of Theorem \ref{existence of calibrated lam} to higher dimension and codimension.

In \S\ref{canonical structure}, we study measured sublaminations of canonical calibrated laminations.
This is motivated by the fact that the \dfn{earthquake norm}, the dual of the norm induced by Thurston's stretch metric, is not strictly convex, and its failure to be strictly convex detects the failure of geodesic laminations to be uniquely ergodic \cite{huang2024earthquakemetricteichmullerspace}.
This suggests that if the stable norm is not strictly convex, then there should be canonical calibrated laminations which are not uniquely ergodic; it turns out that this is exactly what happens.

A finite Borel measure $\mu$ is \dfn{transverse} to a lamination $\lambda$, if $\supp \mu = \supp \lambda$ and $\mu$ is invariant under deformations which preserve the area forms of every leaf of $\lambda$.
A transverse probability measure $\mu$ is \dfn{ergodic} if, for every Borel set $E$ which is a union of leaves of $\lambda$, either $\mu(E) = 0$ or $\mu(E) = 1$.
The lamination $\lambda$ is \dfn{uniquely ergodic}, if there is a unique probability measure $\mu$ which is transverse to $\lambda$ (in which case $\mu$ must be ergodic).

Let $\rho \in H^{d - 1}(M, \RR)$ have unit norm.
The canonical lamination $\lambda_\rho$ may not admit a transverse measure, but the proof of Theorem \ref{existence of calibrated lam} shows that $\lambda_\rho$ has a sublamination which admits an ergodic transverse measure.
Let
$$B := \{\alpha \in H_{d - 1}(M, \RR): \|\alpha\|_1 \leq 1\}$$
be the stable unit ball, and let
$$\rho^* := \{\alpha \in \partial B: \langle \rho, \alpha\rangle = 1\}$$
be the dual flat to $\rho$.
Since the stable norm does not have to be convex, $\rho^*$ does not have to be a singleton.

\begin{corollary}\label{extreme points are indecomposable}
For every $\rho \in H^{d - 1}(M, \RR)$ with $\|\rho\|_\infty = 1$, $\rho^*$ is the set of homology classes which are represented by probability measures which are transverse to sublaminations of the canonical lamination $\lambda_\rho$.
Every extreme point of $\rho^*$ is represented by an ergodic measure on a sublamination of $\lambda_\rho$.
\end{corollary}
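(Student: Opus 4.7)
The argument splits into two steps, corresponding to the two statements. For the set equality, I first check that every transverse probability measure on a sublamination of $\lambda_\rho$ represents a point of $\rho^*$. If $\mu$ is such a measure on $\lambda' \subseteq \lambda_\rho$ and $F$ is any calibration representing $\rho$, then every leaf of $\lambda'$ is $F$-calibrated, so pairing $F$ against the current $T_\mu$ associated to $\mu$ yields $\Mass(\mu) = 1$. Since this pairing depends only on $\rho = [F]$ and $[\mu]$, we obtain $\langle \rho, [\mu]\rangle = 1$, and the duality chain
\[
1 = \langle \rho, [\mu]\rangle \leq \|\rho\|_\infty\,\|[\mu]\|_1 \leq \|\rho\|_\infty\,\Mass(\mu) = 1
\]
forces $\|[\mu]\|_1 = 1$, so $[\mu] \in \rho^*$.

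Conversely, given $\alpha \in \rho^*$, I would invoke the codimension-$1$ existence theorem for mass-minimising laminations (built into the proof of Theorem~\ref{existence of calibrated lam}, using the dimension bound $d \le 7$) to produce a mass-minimiser $T$ in the class $\alpha$ with $\Mass(T) = 1$. For any calibration $F$ representing $\rho$, the equality $\langle F,T\rangle = \langle \rho,\alpha\rangle = 1 = \Mass(T)$ together with the pointwise bound $|F|\le 1$ forces every leaf of $T$ to be $F$-calibrated. Varying $F$ over all calibrations of class $\rho$, each leaf of $T$ lies in the set $\mathscr S$ from Theorem~\ref{existence of calibrated lam}, so $T$ is supported in $\lambda_\rho$ and corresponds to a transverse probability measure on a sublamination of $\lambda_\rho$ with homology class $\alpha$.

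For the extreme-point statement, I would run an ergodic decomposition inside $\rho^*$. Given an extreme point $\alpha$ of $\rho^*$, the first step yields a transverse probability measure $\mu$ with $[\mu]=\alpha$; decompose $\mu = \int \mu_e\,dP(e)$ into ergodic transverse probability measures $\mu_e$. Each $[\mu_e]$ belongs to $\rho^*$ by the first step, so $\alpha = \int [\mu_e]\,dP(e)$ is a barycentric representation of $\alpha$ in the compact convex set $\rho^*$. Extremality of $\alpha$ then forces $[\mu_e] = \alpha$ for $P$-almost every $e$, producing the desired ergodic representative. The main obstacle I anticipate is setting up this ergodic decomposition cleanly for transverse measures on a Lipschitz lamination in the sense of \S\ref{sec: lamination review}: one must realise the space of transverse probability measures on $\lambda_\rho$ as a metrisable compact convex subset of currents (via weak-$\ast$ compactness in the dual of continuous $(d-1)$-forms) so that Choquet's theorem applies, and then verify that the extreme points of this simplex coincide with the ergodic transverse measures as defined in the paper.
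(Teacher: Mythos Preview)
Your argument for the set equality is correct and matches the paper: the forward inclusion is exactly the calibration computation you give (the paper packages it as Lemma~\ref{properties of calibrated laminations}), and the reverse inclusion is obtained by producing, for each $\alpha \in \rho^*$, an $\alpha$-equivariant function of least gradient whose associated lamination is then $F$-calibrated for every calibration $F$ in $\rho$ (Lemmata~\ref{existence for least gradient}, \ref{calibrated means measured stretch} and \ref{calibration condition}, together with Theorem~\ref{least gradient iff lamination}).

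For the extreme-point statement your route via Choquet ergodic decomposition is valid, but the paper takes a shorter path that sidesteps exactly the obstacle you flag. Rather than decomposing a given $\mu$ and pushing the barycentre through, the paper observes that $\mathcal M(\lambda_\rho)$ is a compact convex set in the vague topology and that the homology-class map $\mathcal M(\lambda_\rho) \to \rho^*$ is an affine surjection. For any extreme point $\alpha$ of $\rho^*$, the fibre over $\alpha$ is a closed face of $\mathcal M(\lambda_\rho)$, so by Krein--Milman it contains an extreme point $\mu$ of $\mathcal M(\lambda_\rho)$; the paper then proves directly (the standard two-line argument, as in \cite[Theorem~4.4]{einsiedler2010ergodic}) that extreme points of $\mathcal M(\lambda_\rho)$ are ergodic. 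This avoids any need for an integral representation or for verifying the Choquet-simplex structure of the space of transverse measures: only metrisable compactness of $\mathcal M(\lambda_\rho)$ and the elementary ``extreme $\Rightarrow$ ergodic'' lemma are used. Your approach would yield the same conclusion but at the cost of the extra measure-theoretic setup you anticipate.
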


Auer and Bangert \cite{Auer01} observed that one can use a lemma of Arnoux and Levitt \cite{Arnoux1986} to estimate the number of ergodic measures on sublaminations of a lamination without closed leaves.
So by Corollary \ref{extreme points are indecomposable}, the Arnoux--Levitt lemma applied to $\lambda_\rho$ determines the structure of $\rho^*$.
A homology class $\alpha \in H_{d - 1}(M, \RR)$ has \dfn{rational direction} if there exists $c > 0$ such that $c\alpha$ is in the image of the map $H_{d - 1}(M, \ZZ) \to H_{d - 1}(M, \RR)$.
Let $b_1(M) := \dim H_1(M, \QQ)$ be the first Betti number.

\begin{theorem}\label{vertex counting}
Let $F$ be a maximal flat of the stable unit sphere $\partial B$. Then:
\begin{enumerate}
\item $F$ is a convex polytope.
\item The number of vertices of $F$ with irrational direction is at most $\max(0, b_1(M) - 1)$.
\item A vertex $\alpha$ of $F$ has rational direction iff $\alpha$ is represented by a closed leaf of $\lambda_\rho$.
\end{enumerate}
\end{theorem}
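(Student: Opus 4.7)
The strategy is to use Corollary \ref{extreme points are indecomposable} to identify vertices of $F$ with ergodic transverse measures on sublaminations of $\lambda_\rho$, to apply an Arnoux-type bound for the non-closed-leaf components, and to use the structure theory of calibrated integer currents for part (3). To begin, any maximal flat $F \subseteq \partial B$ arises as $F = \rho^*$ for some $\rho \in H^{d-1}(M, \RR)$ with $\|\rho\|_\infty = 1$: by Hahn--Banach $F$ lies in a supporting affine hyperplane $\{\langle\rho, \cdot\rangle = 1\}$, so $F \subseteq \rho^*$, and maximality forces equality. Corollary \ref{extreme points are indecomposable} then identifies the vertices of $F$ with extreme points of the set of transverse probability measures on sublaminations of $\lambda_\rho$, each represented by an ergodic such measure. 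Decomposing $\supp \lambda_\rho$ into minimal sublaminations, every ergodic measure is supported on a single minimal sublamination, which either is a closed leaf or contains no closed leaves.

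For part (3), the forward direction is immediate. For the converse, suppose a vertex $\alpha$ has rational direction, so $c\alpha \in H_{d-1}(M, \ZZ)$ for some $c > 0$. By de Rham/Thom theory, $c\alpha$ is represented by an integer rectifiable cycle $T$ of minimum mass $c = \langle\rho, c\alpha\rangle$ in its class (since $\rho$ calibrates); the structure theory of calibrated integer currents then writes $T = \sum n_i [H_i]$ as a finite positive integer sum of closed $\rho$-calibrated hypersurfaces. Since $\alpha$ is extreme in $\rho^*$, a convex-combination argument (using $\|\cdot\|_1 = \langle\rho, \cdot\rangle$ on the cone of $\rho$-calibrated classes, and additivity $\|[H_1] + [H_2]\|_1 = \|[H_1]\|_1 + \|[H_2]\|_1$ for calibrated classes) forces each $[H_i]$ to be a positive multiple of $\alpha$. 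Thus $\alpha$ is represented by the closed leaf $H_1 \subseteq \lambda_\rho$.

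For parts (1) and (2), I would apply Arnoux's lemma \cite{Arnoux1986} to the union $\kappa'$ of the non-closed-leaf minimal sublaminations of $\lambda_\rho$: the ergodic transverse probability measures on $\kappa'$ have linearly independent classes in $H_{d-1}(M, \RR) \cong \RR^{\dim H_1(M, \RR)}$ (a nontrivial relation, after splitting into positive and negative parts, would contradict uniqueness of ergodic decomposition together with the mutual singularity of the summands), yielding a crude bound of $\dim H_1(M, \RR)$. The sharpening to $\max(0, \dim H_1(M, \RR) - 1)$ comes from the affine constraint $\langle\rho, \cdot\rangle = 1$ on $\rho^*$ combined with part (3): a rational point of $\rho^*$ lying in the relative interior of the convex hull of the irrational vertices would, by part (3), correspond to a closed leaf inside a non-closed-leaf minimal sublamination --- absurd. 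A standard compactness argument shows that only finitely many closed leaves can represent vertices, so $F$ is the convex hull of finitely many extreme points, hence a convex polytope.

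The main obstacle is the sharpening from $\dim H_1(M, \RR)$ to $\max(0, \dim H_1(M, \RR) - 1)$ in part (2): the naive linear-independence argument only gives $\dim H_1(M, \RR)$, and extracting one more dimension requires Arnoux's lemma in its refined form, together with the density of $H_{d-1}(M, \ZZ)$ inside $H_{d-1}(M, \RR)$ and the calibration-theoretic identification from part (3). A secondary difficulty is the structure theory of calibrated integer currents used in the converse of part (3), which ultimately relies on a Federer-type decomposition of mass-minimizing integer cycles into closed minimal hypersurfaces.
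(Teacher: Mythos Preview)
Your overall strategy is sound and follows the paper's outline, but there is a genuine gap in the sharpening of the bound in part (2), and two places where the argument is too vague.

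\textbf{The sharpening from $b_1$ to $b_1 - 1$.} Your proposed mechanism---that a rational-direction point in the relative interior of the convex hull of the irrational vertices would ``correspond to a closed leaf inside a non-closed-leaf minimal sublamination''---does not work as stated. Such a point $\beta \in \rho^*$ is not a vertex, so part (3) does not apply to it. Even if $\beta$ has rational direction and is represented by some closed leaf $H \subseteq \lambda_\rho$ (via your integer-current argument), $\beta$ is \emph{also} represented by a convex combination of the ergodic measures on the non-closed minimal sublaminations, and there is no reason these two representatives should coincide: the map $\mathcal M(\lambda_\rho) \to \rho^*$ is not injective in general, so no contradiction follows. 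The paper's mechanism (Lemma \ref{abelian case}) is genuinely different: for each minimal non-closed sublamination $\kappa_i$ one constructs an auxiliary $1$-cocycle $t_i$, a slight rational perturbation of one of the ergodic classes $[\mu_i^1]$, chosen so that $t_i$ takes values in a discrete subgroup on cycles and vanishes on cycles supported near the other $\kappa_j$. One then shows, using the Arnoux-type integrality lemma (Lemma \ref{equality of measures}), that the combined family $(t_i, [\mu_i^j])_{i,j}$ is linearly independent in $H^1(M, \RR)$. Since there is at least one $t_i$ whenever there is a non-closed minimal component, this buys the extra dimension. Your proposal invokes ``Arnoux's lemma in its refined form'' but does not identify this construction, and the alternative mechanism you do spell out is incorrect.

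\textbf{Part (3).} Your approach via the decomposition of a mass-minimizing integer cycle into closed hypersurfaces is plausible in dimensions $\leq 7$, but it is different from the paper's argument (Lemma \ref{rational implies closed}), which shows directly that an \emph{ergodic} calibrated measured lamination of rational homology class must be a single closed leaf: the rational class gives a map $M \to \Sph^1$ whose universal-cover lift is the least-gradient primitive $\tilde u$, and a non-closed leaf would accumulate on itself, forcing infinitely many level sets through a small ball and hence infinite total variation. The paper's route ties the closed leaf directly to the given ergodic measure rather than merely to its homology class, which is what the statement of part (3) actually requires.

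\textbf{Part (1).} ``A standard compactness argument'' is not enough here. The paper's argument is specific: assuming infinitely many closed-leaf vertices with distinct homology classes, one weights the corresponding closed leaves by $2^{-n}$ to produce a transverse measure on a sublamination, and then applies the Morgan--Shelan decomposition (Theorem \ref{MorganShelan}) to conclude that only finitely many maximal parallel families of closed leaves occur---hence only finitely many homology classes among the $N_n$, a contradiction.
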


We illustrate this theorem with two examples where $\partial B$ cannot be strictly convex.
\begin{enumerate}
\item Suppose that $M$ is homeomorphic to the closed oriented surface $\Sigma_g$ of genus $g \geq 2$.
By a theorem of Massart \cite{Massart1997StableNO}, if a maximal flat $\rho^*$ has a point of rational direction, then $\rho^*$ has at most $3g - 3$ vertices, all of which have rational direction.
It follows that $\lambda_\rho$ consists of at most $3g - 3$ closed geodesics, plus possibly a ``spiraling'' part which admits no transverse measures.
On the other hand, if $\rho^*$ has no points of rational direction, then $\rho^*$ has at most $2g - 1$ vertices, and $\lambda_\rho$ has no closed leaves.
\item Suppose that $d = 3$, $M$ has positive scalar curvature $R \geq R_{\rm min} > 0$, and $\rho^*$ is a maximal flat.
The inradius of any stable minimal surface $N \subset M$ is $\lesssim R_{\rm min}^{-1/2}$ \cite[Proposition 2.2]{Liokumovich18}; this holds when $N$ is an intrinsic ball in a leaf of $\lambda_\rho$, since calibrated surfaces are stable minimal. 
So every leaf of $\lambda_\rho$ must be closed, and every vertex of $\rho^*$ must have rational direction.
\end{enumerate}

\begin{corollary}\label{strict convexity iff unique ergodicity}
If the stable unit ball $B$ is strictly convex, then every ergodic calibrated lamination is uniquely ergodic.
In particular, if $b_1(M) \geq 2$ and $B$ is strictly convex, then all but countably many homology classes in $\partial B$ are represented by uniquely ergodic calibrated laminations without closed leaves.
\end{corollary}

The analysis of $\lambda_\rho$ yields the following theorem on the strict convexity of the stable unit ball, which was proposed without proof by Auer and Bangert \cite[Theorems 6 and 7]{Auer01}.
The \dfn{intersection product} $\alpha \cdot \beta$ of two homology classes $\alpha, \beta$ is the Poincar\'e dual of the cup product $\PD(\alpha) \smile \PD(\beta)$, and the \dfn{derived series} of a group $\Gamma$ is defined by letting $\Gamma^{(0)} := \Gamma$ and $\Gamma^{(n + 1)}$ be the commutator subgroup of $\Gamma^{(n)}$.

\begin{theorem}\label{Auer Bangert thm}
One has:
\begin{enumerate}
\item If there is a line segment $[\alpha, \beta] \subset \partial B$, then $\alpha \cdot \beta = 0$. \label{ABt1}
\item Let $\Gamma := \pi_1(M)$. If $\Gamma^{(1)}/\Gamma^{(2)}$ is a torsion group, then $B$ is strictly convex. \label{ABt2}
\end{enumerate}
\end{theorem}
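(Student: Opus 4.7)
For part~\ref{ABt1}, my plan is to apply the Hahn--Banach theorem to the segment $[\alpha, \beta] \subset \partial B$ to produce a supporting functional: some $\rho \in H^{d - 1}(M, \RR)$ with $\|\rho\|_\infty = 1$ and $\langle \rho, \alpha \rangle = \langle \rho, \beta \rangle = 1$, so that $[\alpha, \beta] \subseteq \rho^*$. Corollary~\ref{extreme points are indecomposable} then represents $\alpha, \beta$ by transverse measures $\mu_\alpha, \mu_\beta$ to sublaminations $\lambda_\alpha, \lambda_\beta$ of $\lambda_\rho$. Because $\lambda_\rho$ is a codimension-$1$ lamination, in any flow box $(-1, 1) \times D$ for $\lambda_\rho$ all leaves of $\lambda_\alpha$ and $\lambda_\beta$ are level sets $\{t\} \times D$ of the same transverse coordinate $t$. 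The Poincar\'e duals $\eta_\alpha := \PD(\alpha), \eta_\beta := \PD(\beta) \in H^1(M, \RR)$ thus admit closed $1$-current representatives $\omega_\alpha, \omega_\beta$ that take the forms $dt \otimes \mu_\alpha$ and $dt \otimes \mu_\beta$ in each flow box. Since $dt \wedge dt = 0$, the wedge $\omega_\alpha \wedge \omega_\beta$ vanishes formally; to deduce that $\eta_\alpha \cup \eta_\beta = 0$ in $H^2(M, \RR)$, I pair against an arbitrary smooth closed $(d - 2)$-form $\theta$ and use the Poincar\'e duality relation to rewrite
$$\int_M \omega_\alpha \wedge \omega_\beta \wedge \theta = T_\alpha(\omega_\beta \wedge \theta) = \int_{L \in \lambda_\alpha} (\omega_\beta \wedge \theta)|_L \, d\mu_\alpha(L) = 0,$$
the final equality holding because $\omega_\beta$ is purely transverse to $\lambda_\rho$ and hence restricts to zero on each leaf of $\lambda_\rho \supseteq \lambda_\alpha$. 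Poincar\'e duality then gives $\alpha \cdot \beta = 0$.

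For part~\ref{ABt2}, the plan is an argument by contradiction. If $B$ is not strictly convex, pick a nontrivial segment $[\alpha, \beta] \subset \partial B$; since $\|\alpha\|_1 = \|\beta\|_1 = 1$ and $0 \notin \partial B$, the endpoints are linearly independent. By part~\ref{ABt1}, $\eta_\alpha \cup \eta_\beta = 0$ in $H^2(M, \RR)$ with $\eta_\alpha, \eta_\beta$ linearly independent (Poincar\'e duality being a linear isomorphism). It therefore suffices to show that the hypothesis forces the cup product map $\wedge^2 H^1(M, \RR) \to H^2(M, \RR)$ to be injective. First, the Lyndon--Hochschild--Serre five-term exact sequence with $\QQ$-coefficients for $1 \to \Gamma^{(1)} \to \Gamma \to \Gamma/\Gamma^{(1)} \to 1$ reads
$$H_2(\Gamma, \QQ) \to H_2(\Gamma/\Gamma^{(1)}, \QQ) \to \Gamma^{(1)}/[\Gamma, \Gamma^{(1)}] \otimes \QQ \to 0,$$
and since $\Gamma^{(2)} = [\Gamma^{(1)}, \Gamma^{(1)}] \subseteq [\Gamma, \Gamma^{(1)}]$, the hypothesis that $\Gamma^{(1)}/\Gamma^{(2)}$ is torsion forces $\Gamma^{(1)}/[\Gamma, \Gamma^{(1)}]$ to be torsion, making $H_2(\Gamma, \QQ) \twoheadrightarrow H_2(\Gamma/\Gamma^{(1)}, \QQ) = \wedge^2 H_1(\Gamma, \QQ)$; dualizing, the inflation $\wedge^2 H^1(\Gamma, \RR) \hookrightarrow H^2(\Gamma, \RR)$, which is cup product, is injective. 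Second, because the universal cover $\tilde M$ is simply connected, the Serre spectral sequence $H^p(\Gamma, H^q(\tilde M, \RR)) \Rightarrow H^{p + q}(M, \RR)$ has $E_2^{0, 1} = 0$, so no $d_2$-differential hits $E_2^{2, 0}$, giving $H^2(\Gamma, \RR) \hookrightarrow H^2(M, \RR)$. Composing yields the desired injectivity, contradicting $\eta_\alpha \cup \eta_\beta = 0$, and so $B$ must be strictly convex.

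The main obstacle is rigorously justifying the current-theoretic computation in part~\ref{ABt1}: the $1$-currents $\omega_\alpha, \omega_\beta$ are singular, and both the wedge $\omega_\beta \wedge \theta$ and the pairing $T_\alpha(\omega_\beta \wedge \theta)$ require careful interpretation. The cleanest approach is to approximate $\omega_\beta$ by smooth closed $1$-forms $\omega_\beta^\varepsilon$ which remain purely transverse to $\lambda_\rho$---built by mollifying the transverse measure $\mu_\beta$ in each flow box and patching via a partition of unity subordinate to a cover of $\supp \lambda_\rho$ by flow boxes---so that $\omega_\alpha \wedge \omega_\beta^\varepsilon$ vanishes as a current in each flow box, and then passing $\varepsilon \to 0$ preserves the cohomology class. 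The Lipschitz regularity of $\lambda_\rho$ guaranteed by Theorem~\ref{existence of calibrated lam} is precisely what ensures that the $\omega_\beta^\varepsilon$ can be chosen to interpolate consistently across flow-box boundaries.
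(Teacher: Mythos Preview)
Your argument for part~\ref{ABt1} is essentially the paper's: find $\rho$ with $[\alpha,\beta]\subseteq\rho^*$, realize $\alpha,\beta$ by Ruelle--Sullivan currents of measured sublaminations of $\lambda_\rho$, and observe that both currents are pointwise proportional to the same conormal $1$-form $\normal_\lambda^\flat$ (equivalently, to $\dif t$ in a flow box), so their wedge vanishes. The paper phrases this via the decomposition $T_\mu=\normal_\lambda^\flat\mu$ from (\ref{decomposition of Ruelle Sullivan}) rather than flow-box coordinates, but the content is identical. Your mollification sketch is a reasonable way to make the pairing $\int T_\alpha\wedge\omega_\beta\wedge\theta$ rigorous; note, though, that patching local mollifications by a partition of unity will in general destroy closedness, so you would need the mollified transverse densities to agree on overlaps, which requires more than Lipschitz transition maps. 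The paper is no more careful at this step.

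Your argument for part~\ref{ABt2} is correct and takes a genuinely different route. The paper argues geometrically: assuming $B$ is not strictly convex, it produces two ergodic sublaminations of some $\lambda_\rho$, forms the average $u=(u_\alpha+u_\beta)/2$ of their least-gradient primitives on $\tilde M^{\rm ab}$, observes that $\partial\{u>0\}$ is disconnected, and invokes a lemma (Lemma~\ref{abelian cover connected}, due to Auer--Bangert) to produce a nontrivial class in $H_1(\tilde M^{\rm ab},\RR)\cong\Hom(\Gamma^{(1)}/\Gamma^{(2)},\RR)$. You instead deduce part~\ref{ABt2} formally from part~\ref{ABt1}: the hypothesis forces the cup product $\wedge^2 H^1(M,\RR)\to H^2(M,\RR)$ to be injective (via the five-term sequence and the injection $H^2(\Gamma,\RR)\hookrightarrow H^2(M,\RR)$ coming from $H^1(\tilde M,\RR)=0$), so two linearly independent classes on a flat would have nonzero cup product, contradicting part~\ref{ABt1}. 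Your approach is cleaner and shows that part~\ref{ABt2} is really a cohomological shadow of part~\ref{ABt1}; the paper's approach is more self-contained geometrically and does not rely on part~\ref{ABt1}. One small point: your truncated five-term sequence should continue with $\to H_1(\Gamma,\QQ)$, and you are implicitly using that this last map is zero because $H_1(\Gamma,\QQ)\to H_1(\Gamma/\Gamma^{(1)},\QQ)$ is an isomorphism; this is correct but worth stating.
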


For example, suppose that $M$ has the homotopy type of a torus.
Then $B$ is strictly convex and so:
\begin{enumerate}
\item $M$ has many uniquely ergodic laminations of minimal hypersurfaces.
If $M = \RR^d/[0, 1]^d$, then these laminations are the irrational foliations of $M$, but what Theorem \ref{Auer Bangert thm} says is that the same behavior occurs regardless of the Riemannian metric on $M$.
\item If $d = 3$, then the discussion after Theorem \ref{vertex counting} implies that $M$ does not have positive scalar curvature. This is a classic theorem of Schoen and Yau \cite{Schoen1979} which has a much more direct proof.
\end{enumerate}

Since the canonical calibrated lamination $\lambda_\rho$ does not have to be uniquely ergodic, it is natural to ask if there is a canonical measure on $\lambda_\rho$.
In upcoming work \cite{daskalopoulos2025}, Daskalopoulos and Uhlenbeck will show there is a favored measure on any canonical maximally stretched lamination with only closed leaves.
The same proof works on $\lambda_\rho$, but it is quite lengthy, so we omit the proof.
For each $p < \infty$, there is a unique representative $F_p \in L^p(M, \Omega^{d - 1})$ of $\rho$ which is \dfn{$p$-harmonic}; that is,
$$\dif F_p = 0, \quad \dif^*(|F_p|^{p - 2} F_p) = 0.$$

\begin{theorem}[{\cite{daskalopoulos2025}}]\label{thm: uniqueness of measure}
Let $F_p$ be the $p$-harmonic representative of $\rho$, and let
$$\dif u_p := |F_p|^{p - 2} \star F_p.$$
After taking a subsequence, $u_p$ converges in $L^1_\loc(\tilde M)$ to a function $u$ of least gradient such that $\mu := |\dif u| \in \mathcal M(\lambda_\rho)$.
If $M$ is a hyperbolic surface and $\lambda_\rho$ only has closed leaves, then $\mu$ is independent of the choice of subsequence.
\end{theorem}

The proof of Theorem \ref{thm: uniqueness of measure} suggests that if $\lambda_\rho$ has a uniquely ergodic sublamination $\kappa$ which has strictly larger Hausdorff dimension than the rest of $\lambda_\rho$ (and $M$ is not a closed hyperbolic surface), then $\mu$ conjecturally should be the unique measure on $\kappa$, and so should be independent of the choice of subsequence.

Theorems \ref{existence of calibrated lam} and \ref{thm: uniqueness of measure} are explicitly based on theorems about the earthquake norm, and there are also versions of Theorem \ref{vertex counting} and \ref{Auer Bangert thm} for the earthquake norm proven in \cite{huang2024earthquakemetricteichmullerspace}.
In \S\ref{Teichmuller conjectures} we conjecture a version of Corollary \ref{extreme points are indecomposable} for the earthquake norm.

\begin{ack}
This work was closely inspired by ideas in \cite{Auer01} of Franz Auer and Victor Bangert; I am especially grateful to Victor Bangert for allowing me to view their unpublished manuscript \cite{Auer12}.
I also thank Georgios Daskalopoulos and Karen Uhlenbeck for helpful discussions, and James Farre, Yi Huang, Zhenhua Liu, Ben Lowe, and the anonymous referee for helpful comments on an earlier draft.
This research was supported by the National Science Foundation's Graduate Research Fellowship Program under Grant No. DGE-2040433.
\end{ack}

\section{Preliminaries}\label{prevResults}
\subsection{Notation}
Unless otherwise noted, $M$ always denotes a closed oriented Riemannian manifold of dimension $2 \leq d \leq 7$.
The operator $\star$ is the Hodge star on $M$.
We denote the musical isomorphisms by $\sharp, \flat$.
We write $H^\ell$ for de Rham cohomology, but never a Sobolev space, which we instead denote $W^{\ell, p}$.
The second fundamental form of a submanifold $N$ is $\Two_N$.
If $K$ is a closed compact subset of a topological vector space, $\mathcal E(K)$ is the set of extreme points of $K$.

The sheaf of $\ell$-forms is denoted $\Omega^\ell$, and the sheaf of closed $\ell$-forms is denoted $\Omega^\ell_{\rm cl}$.
We assume that $\ell$-forms are $L^1_\loc$, but \emph{not} that they are continuous; hence $\dif$ must be meant in the sense of distributions.

We write $A \lesssim_\theta B$ to mean that $A \leq CB$, where $C > 0$ is a constant that only depends on $\theta$.

\subsection{Differential forms in \texorpdfstring{$L^\infty$}{L-infinity}}
In this section, one can allow $M$ to be an arbitrary complete Riemannian manifold; compactness is unnecessary.

One of the main technical difficulties that we shall have to deal with is that we cannot prove the existence of continuous calibrations in general, and so we shall need to study differential forms which are merely in $L^\infty$.
Such a form $F$ does not need to be well-defined on a set of zero measure, so in general, it does not make sense to integrate $F$ along a submanifold of $M$.

\begin{theorem}[$L^\infty$ Poincar\'e lemma]\label{Hodge theorem}
Let $x \in M$, and $0 \leq k \leq d - 1$.
Then there exists $r_* > 0$ which depends only on $\Riem_M$ near $x$ and the injectivity radius of $x$, such that for every $0 < r \leq r_*$ and $F \in L^\infty(B(x, r), \Omega^{k + 1}_{\rm cl})$, there exists a H\"older continuous $k$-form $A$ such that $F = \dif A$.
\end{theorem}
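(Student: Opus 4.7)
The plan is to reduce via the exponential map to a Euclidean ball, construct a primitive with the Poincar\'e cone homotopy operator, and then upgrade $L^\infty$ regularity to H\"older regularity using $W^{1,p}$ mapping properties of that operator and Morrey's embedding.

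First I would choose $r_* > 0$ so small that $\exp_x \colon B_{r_*}(0) \subset T_x M \to B(x, r_*)$ is a diffeomorphism whose bi-Lipschitz constant is controlled by $\Riem_M$ near $x$ and by $\inj_M(x)$ alone. Since $\dif$ commutes with pullback and pullback by a bi-Lipschitz diffeomorphism preserves $L^\infty$ and H\"older classes of forms, it suffices to prove the statement on a Euclidean ball $B := B_r(0) \subset \RR^d$ equipped with a smooth metric comparable to the flat one. On $B$, I would define the Poincar\'e homotopy operator
$$(KF)_y(v_1, \dots, v_k) := \int_0^1 t^k F_{ty}(y, v_1, \dots, v_k) \, \dif t$$
and set $A := KF$. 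A Fubini argument shows $A$ is a well-defined $L^\infty$ $k$-form on $B$. Mollifying $F$ to $F_\varepsilon$, applying the smooth Poincar\'e identity $\dif K + K \dif = \id$ to $F_\varepsilon$, and passing to the distributional limit yields $\dif A = F$, using that $F$ is closed so that $K \dif F_\varepsilon \to 0$.

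Second, to promote $A$ from $L^\infty$ to H\"older continuous, I would invoke the $L^p$-to-$W^{1,p}$ continuity of the Poincar\'e homotopy operator (a computation due to Iwaniec--Lutoborski, or equivalently a consequence of Calder\'on--Zygmund theory applied to the rescaled kernel): for every $1 < p < \infty$, $K \colon L^p(B, \Omega^{k+1}) \to W^{1, p}(B, \Omega^k)$ is bounded. Since $F \in L^\infty(B) \subset L^p(B)$ for every finite $p$, we obtain $A \in W^{1, p}(B)$ for every such $p$, and Morrey's embedding $W^{1,p} \hookrightarrow C^{0, 1 - d/p}$ produces $A \in C^{0, \alpha}(B)$ for every $\alpha \in (0, 1)$, as desired.

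The main obstacle is the $W^{1,p}$ estimate for $K$. Differentiating $(KF)_y$ in $y$ produces, after the substitution $s = t|y|$, a convolution-like operator with a kernel of Calder\'on--Zygmund type plus a lower-order radial term from the explicit contraction with $y$, so the bound ultimately reduces to standard singular integral theory. However, because $F$ is only assumed $L^\infty$ and not continuous, both the identity $\dif A = F$ and the differentiation under the integral defining $\nabla A$ require justification through mollification and careful distributional bookkeeping; this is where the bulk of the work lies.
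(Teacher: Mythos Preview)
Your overall strategy matches the paper's exactly: reduce to a Euclidean ball via the exponential map, produce a primitive in $W^{1,p}$ for some $p>d$, and apply Morrey's embedding. The paper simply cites Costabel--McIntosh for the existence of a bounded right inverse $L^p(\Omega^{k+1}_{\rm cl}) \to W^{1,p}(\Omega^k)$, whereas you attempt to build one explicitly.

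The gap is that the single-center cone operator you wrote down does \emph{not} map closed $L^\infty$ (or $L^p$) forms to $W^{1,p}$ when $k \geq 1$, and in fact need not even produce a continuous primitive. Take $F = 1_{\{x_1 > 0\}}\, dx^1 \wedge dx^2$ on a ball in $\RR^d$, $d \geq 2$; this is closed since $\partial_j 1_{\{x_1>0\}}\, dx^j \wedge dx^1 \wedge dx^2$ vanishes for every $j$. Then $\int_0^1 t\, 1_{\{ty_1>0\}}\, dt = \tfrac12 1_{\{y_1>0\}}$, so the $dx^1$-component of $KF$ is $-\tfrac12\, y_2\, 1_{\{y_1>0\}}$, which jumps across $\{y_1=0\}$; thus $KF$ is discontinuous and $\partial_1(KF)_1$ is a measure concentrated on a hyperplane, lying in no $L^p$. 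The Iwaniec--Lutoborski estimate you invoke is for the \emph{averaged} Poincar\'e operator, in which the cone vertex is integrated against a bump function; that averaging is exactly what smooths the radial singularity and produces a genuine Calder\'on--Zygmund kernel. Replacing your $K$ by the averaged version (equivalently, citing Costabel--McIntosh directly, as the paper does) closes the gap with no change to the rest of your argument.
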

\begin{proof}
We may choose $r_*$ so that the exponential map $B_{\RR^d}(0, r_*) \to B(x, r_*)$ is a diffeomorphism which induces topological isomorphisms for every function space under consideration.
Thus it is no loss to replace $B(x, r)$ with the unit euclidean ball $\Ball^d$.
By the main theorem of \cite{Costabel2010}, for every $1 < p < \infty$ there is a continuous right inverse to the exterior derivative
\[\begin{tikzcd}
	{W^{1, p}(\Ball^d, \Omega^{\ell - 1})} & {L^p(\Ball^d, \Omega^\ell_{\rm cl})}
	\arrow["{\mathrm d}", from=1-1, to=1-2]
\end{tikzcd}.\]
The result now follows from the Sobolev embedding theorem if we take $p > d$.
\end{proof}

The next result is a rephrasing of \cite[Theorem 1.2]{Anzellotti1983}, and asserts that closed $L^\infty$ $d - 1$-forms can be integrated along Lipschitz hypersurfaces.

\begin{theorem}[normal trace theorem]\label{integration is welldefined}
Let $\iota: N \to M$ be the inclusion of an oriented Lipschitz hypersurface.
Let $\mathcal X$ be the space of $F \in L^\infty(M, \Omega^{d - 1})$ such that the components of $\dif F$ are Radon measures.
Then the pullback $\iota^*$ of $d - 1$-forms extends to a bounded linear map
$$\iota^*: \mathcal X \to L^\infty(N, \Omega^{d - 1})$$
satisfying the estimate
\begin{equation}\label{integral over chain is linfinity}
	\|\iota^* F\|_{L^\infty(N)} \leq \|F\|_{L^\infty(M)}.
\end{equation}
\end{theorem}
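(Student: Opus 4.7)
The plan is to reduce the statement to Anzellotti's classical normal trace theorem via Hodge duality. The Hodge star and musical isomorphism give an isometric identification
\[
L^\infty(M, \Omega^{d-1}) \ni F \mapsto X_F := (\star F)^\sharp \in L^\infty(M, TM),
\]
under which the condition $F \in \mathcal{X}$ corresponds precisely to $\operatorname{div} X_F$ being a Radon measure, and under which, when $F$ and $N$ are smooth, the pullback $\iota^* F$ coincides with $\langle X_F, \nu_N\rangle\, \vol_N$, where $\nu_N$ is the oriented unit normal to $N$. It therefore suffices to construct, for any bounded divergence-measure vector field $X$, a well-defined normal trace $[X \cdot \nu_N] \in L^\infty(N)$ satisfying the pointwise $L^\infty$ bound, and then set $\iota^* F := [X_F \cdot \nu_N]\,\vol_N$.

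I would proceed locally. Near each point of $N$, choose Lipschitz coordinates in which $N$ is the graph of a Lipschitz function and locally separates a small ball into two open sets of finite perimeter; call one of them $E$. In these coordinates, \cite[Theorem 1.2]{Anzellotti1983} applies and defines $[X_F \cdot \nu_N] \in L^\infty(N)$ weakly by the integration-by-parts identity
\[
\int_N [X_F \cdot \nu_N]\, \phi\, d\mathcal{H}^{d-1} = \int_E \phi \, d(\operatorname{div} X_F) + \int_E \langle X_F, \nabla \phi\rangle\, d\vol
\]
for all $\phi \in C^\infty_c$ supported in the coordinate patch, with the sharp bound $\|[X_F \cdot \nu_N]\|_{L^\infty} \leq \|X_F\|_{L^\infty} = \|F\|_{L^\infty}$. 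I would then patch these local definitions with a finite partition of unity subordinate to a cover of $N$ by such coordinate patches, verify bilinearity and continuity, and check that in the smooth case the construction recovers the ordinary pullback, so that $\iota^*$ is genuinely extended.

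The main obstacle is showing that $[X_F \cdot \nu_N]$ is intrinsic, i.e.\ independent of the choice of coordinates and of which side $E$ we selected. Independence of side is automatic from Anzellotti's formalism, since swapping $E$ with its complement simultaneously flips the sign of $\nu_N$ and the boundary terms in the identity above. Independence of coordinates follows from the tensorial nature of the weak identity: divergences and gradients transform correctly under Lipschitz diffeomorphisms, and the measure $\operatorname{div} X_F$ is defined intrinsically on $M$, so the pairing is invariant under the change-of-variable formula for Lipschitz maps. Once these checks are in place, \eqref{integral over chain is linfinity} is immediate from the Anzellotti bound, and the theorem follows.
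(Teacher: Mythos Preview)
Your proposal is correct and matches the paper's approach exactly: the paper does not give a proof at all but simply states that the theorem is ``a rephrasing of \cite[Theorem 1.2]{Anzellotti1983},'' and your Hodge-duality dictionary $F \leftrightarrow X_F = (\star F)^\sharp$ is precisely the rephrasing the paper has in mind. Your write-up in fact supplies the details of that translation (that $\dif F$ being measure-valued is equivalent to $\operatorname{div} X_F$ being measure-valued, and that $\iota^* F = \langle X_F, \nu_N\rangle\,\vol_N$ in the smooth case) which the paper leaves entirely to the reader.
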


The \dfn{comass} of a differential $k$-form $F$ is
$$\|F\|_{L^\infty_*} := \sup_{\Sigma \subset M} \frac{1}{\vol(\Sigma)} \int_\Sigma F,$$
where the supremum ranges over all oriented $k$-dimensional submanifolds $\Sigma$.
It is clear that $\|F\|_{L^\infty_*} \leq \|F\|_{L^\infty}$, but if $F$ is a $d - 1$-form, then the converse holds as well; we shall often use this fact without comment.

A \dfn{$k$-current of finite mass} is a continuous linear functional on the space 
$$C^0(M, \Omega^{d - k}) \cap L^\infty(M, \Omega^{d - k})$$
of bounded continuous $d - k$-forms.\footnote{Be warned: this convention agrees with currents in algebraic geometry, where currents are viewed as generalizations of forms, but not geometric measure theory, where currents are viewed as generalizations of submanifolds.}
We denote the action of a current $T$ on a form $\varphi$ by $\int_M T \wedge \varphi$.

The \dfn{mass} of a $k$-current $T$ is
$$\Mass(T) := \sup_{\|F\|_{L^\infty_*} \leq 1} \int_M T \wedge F.$$
If $T$ represents a $d - k$-dimensional submanifold $\Sigma$, in the sense that $\int_M T \wedge F = \int_\Sigma F$, then $\Mass(T) = \vol(\Sigma)$.
A function $u \in L^1_\loc(M)$ has \dfn{bounded variation}, denoted $u \in BV(M)$, if $\dif u$ is a $1$-current of finite mass, in which case $\Mass(\dif u)$ is the \dfn{total variation} of $u$, and we write $\int_M \star |\dif u|$ to mean $\Mass(\dif u)$.

One cannot multiply two arbitrary distributions, but one can define $\dif u \wedge F$ when $u \in BV$, $F \in L^\infty$, and $\dif F = 0$.
More precisely, we have:

\begin{definition}
Let $u \in BV(M, \Omega^k)$ and $F \in L^\infty(M, \Omega^{d - k - 1})$.
Assume that $\dif F \in L^d(M, \Omega^{d - k})$.
Then the \dfn{Anzellotti wedge product} of $\dif u$ and $F$ is the distribution $\dif u \wedge F$, such that for every test function $\chi \in C^\infty_\cpt(M, \RR)$,
$$\langle \dif u \wedge F, \chi\rangle := -\int_M \chi u \wedge \dif F - \int_M \dif \chi \wedge u \wedge F.$$
\end{definition}

The next theorem is essentially \cite[Theorem 1.5]{Anzellotti1983}, but we sketch the argument because Anzellotti did not formulate it in such generality.

\begin{theorem}[Anzellotti's theorem]\label{Anzellotti wedge product exists}
Let $u \in BV(M, \Omega^k)$, $F \in L^\infty(M, \Omega^{d - k - 1})$, and $\dif F \in L^d(M, \Omega^{d - k})$.
Then the Anzellotti wedge product $\dif u \wedge F$ is well-defined as a distribution.
In fact, $\dif u \wedge F$ is a signed Radon measure, and 
$$\Mass(\dif u \wedge F) \leq \Mass(\dif u) \|F\|_{L^\infty_*}.$$
In particular, if $k = 0$,
\begin{equation}\label{Anzellotti Holder inequality}
\Mass(\dif u \wedge F) \leq \|F\|_{L^\infty} \int_M \star |\dif u|.
\end{equation}
\end{theorem}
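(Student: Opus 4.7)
The plan is to define the wedge product by approximating $u$ with smooth forms, and then invoke the Riesz representation theorem to upgrade the resulting distribution to a signed Radon measure. First observe that the defining formula is well-posed: by the BV Sobolev embedding (Theorem \ref{BV Sobolev}), $u \in L^{d/(d - 1)}_\loc$, so H\"older's inequality with conjugate exponents $d/(d - 1)$ and $d$ makes $\chi u \wedge \dif F$ absolutely integrable, while $\dif \chi \wedge u \wedge F$ is absolutely integrable because $\dif \chi \in C^\infty_\cpt$, $u \in L^1_\loc$, and $F \in L^\infty$. Thus $\dif u \wedge F$ is an honest distribution.

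Next, I would mollify $u$ as $u_n := u * \eta_n$ using a partition of unity and standard mollifiers in coordinate charts covering $\supp \chi$, so that $u_n$ is smooth, $u_n \to u$ in every $L^p_\loc$ with $p < \infty$, and $\Mass(\dif u_n) \leq \Mass(\dif u)$ up to an error vanishing with $n$. For smooth $u_n$, the distributional identity for $\dif F$ applied to the compactly supported test form $\chi u_n$, combined with the Leibniz rule $\dif(\chi u_n) = \dif\chi \wedge u_n + \chi\, \dif u_n$, yields
\[
\int_M \chi\, \dif u_n \wedge F \;=\; -\int_M \chi\, u_n \wedge \dif F \;-\; \int_M \dif \chi \wedge u_n \wedge F,
\]
which is exactly the defining expression for $\langle \dif u_n \wedge F, \chi\rangle$. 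Viewing $\dif u_n$ as a $(d - k - 1)$-dimensional current of mass $\int_M |\dif u_n|$ and $\chi F$ as a $(d - k - 1)$-form of comass at most $\|\chi\|_{L^\infty}\Comass(F)$, the comass--mass duality bounds
\[
\left|\int_M \chi\, \dif u_n \wedge F\right| \;\leq\; \|\chi\|_{L^\infty}\, \Comass(F)\, \Mass(\dif u).
\]

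Finally, pass $n \to \infty$. The first term on the right side of the Stokes identity converges using $u_n \to u$ in $L^{d/(d-1)}_\loc$ paired with $\dif F \in L^d$; the second converges using $u_n \to u$ in $L^1_\loc$ paired with the bounded forms $F$ and $\dif \chi$. Hence $|\langle \dif u \wedge F, \chi\rangle| \leq \|\chi\|_{L^\infty}\Comass(F)\Mass(\dif u)$, and the Riesz representation theorem realizes $\dif u \wedge F$ as a signed Radon measure with total mass at most $\Comass(F)\Mass(\dif u)$. The specialization \eqref{Anzellotti Holder inequality} when $k = 0$ follows because $F$ is then a $(d-1)$-form, for which the excerpt establishes $\Comass(F) = \|F\|_{L^\infty}$, and $\Mass(\dif u) = \int_M \star |\dif u|$ by definition. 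The main difficulty I expect is the interplay between mollification and the BV mass on a curved manifold: one must handle the partition of unity carefully so that $\Mass(\dif u_n) \to \Mass(\dif u)$ without extra mass accumulating at chart overlaps, and one must verify that passing to the limit commutes with the bilinear pairing against $F$ despite only having $F \in L^\infty$ rather than continuous.
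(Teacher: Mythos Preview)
Your proposal is correct and follows essentially the same route as the paper: both arguments use the $BV$ Sobolev embedding to show the pairing is well-defined, approximate $u$ by smooth forms, apply the mass--comass duality to the smooth approximants, and pass to the limit. The only cosmetic difference is that the paper phrases the limiting step via weak-$*$ convergence in $BV$ together with the portmanteau theorem to control $\liminf_n \Mass(1_{\overline U}\,\dif u_n)$, whereas you invoke the strict-convergence property $\Mass(\dif u_n)\to\Mass(\dif u)$ of the Anzellotti--Giaquinta mollification directly; either device handles exactly the concern you flag about mass not accumulating at chart overlaps.
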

\begin{proof}
By the $BV$ Sobolev embedding theorem, \cite[\S5.6]{evans2015measure}, for every $\chi \in C^\infty_\cpt(M)$, $\chi u$ belongs to the dual space of $L^d(M, \Omega^{d - k})$.
Therefore for every $\chi \in C^\infty_\cpt(M)$, $\langle \dif u \wedge F, \chi\rangle$ is finite, so $\dif u \wedge F$ is well-defined as a distribution.

Suppose that $\supp \chi \Subset U$ for some $U \Subset M$.
If $u$ is sufficiently smooth, then an integration by parts gives 
$$|\langle \dif u \wedge F, \chi\rangle| = \left|\int_M \chi \dif u \wedge F\right| \leq \|F\|_{L^\infty_*} \|\chi\|_{C^0} \Mass(1_U \dif u).$$
In general, we can find a sequence $(u_n) \subset C^\infty$ such that $u_n \weakto^* u$ in $BV$.
Then $u_n \weakto u$ in $L^{\frac{d}{d - 1}}$ and $\dif u_n \weakto^* \dif u$ as currents of locally finite mass.
Since we are testing $\dif u$ against the $L^d$ form $\chi F$,
$$|\langle \dif u \wedge F, \chi\rangle| \leq \liminf_{n \to \infty} |\langle \dif u_n \wedge F, \chi\rangle| \leq \|F\|_{L^\infty_*} \|\chi\|_{C^0} \liminf_{n \to \infty} \Mass(1_U \dif u_n).$$
But, by the portmanteau theorem \cite[Theorem 17.20]{kechris2012classical},
$$\liminf_{n \to \infty} \Mass(1_U \dif u_n) \leq \liminf_{n \to \infty} \Mass(1_{\overline U} \dif u_n) \leq \Mass(\dif u)$$
which gives the desired estimate, since we only used the $C^0$ norm of $\chi$.
\end{proof}

\subsection{Calibrated geometry}\label{calibrated review sec}
We recall calibrated geometry, which was developed by Harvey and Lawson \cite{Harvey82}.

\begin{definition}
A \dfn{calibration} is a $k$-form $F$ such that $\dif F = 0$ and $\|F\|_{L^\infty_*} = 1$.
If $\Sigma$ is a $k$-dimensional submanifold, and $F$ pulls back to the Riemannian volume form of $\Sigma$, we say that $\Sigma$ is \dfn{$F$-calibrated}.
\end{definition}

If $\Sigma$ is $F$-calibrated, then for any $k - 1$-dimensional submanifold $\Lambda$,
$$\vol(\Sigma) = \int_\Sigma F = \int_{\Sigma + \partial \Lambda} F \leq \vol(\Sigma + \partial \Lambda),$$
so that $\Sigma$ is area-minimizing.
On the other hand, if $A \in W^{1, \infty}(M, \Omega^{k - 1})$, and $\Sigma$ is a closed $F$-calibrated submanifold, then
$$\|F\|_{L^\infty_*} = 1 = \frac{1}{\vol(\Sigma)} \int_\Sigma F = \frac{1}{\vol(\Sigma)} \int_\Sigma F + \dif A \leq \|F + \dif A\|_{L^\infty_*},$$
so $F$ minimizes its comass in its cohomology class if it calibrates a closed hypersurface.

The definition of $F$-calibration extends to currents.
If $F$ is a calibration $k$-form, a $d - k$-current $T$ is \dfn{$F$-calibrated} if
$$\int_M T \wedge F = \Mass(T).$$
By Anzellotti's theorem, Theorem \ref{Anzellotti wedge product exists}, this definition makes sense as long as $T$ has locally finite mass.
If $T$ is $F$-calibrated, then for any $d - k - 1$-current $S$, $\Mass(T) \leq \Mass(T + \dif S)$.

The comass and mass induce norms on cohomology and homology.
The \dfn{stable norm} $\|\cdot\|_1$ on $H_k(M, \RR)$ is defined by 
$$\|\theta\|_1 := \inf_{\PD([T]) = \theta} \Mass(T),$$
where $\PD(\omega)$ is the Poincar\'e dual of the class $\omega$, and $T$ ranges over $d - k$-currents.
The \dfn{costable norm} $\|\cdot\|_\infty$ is the dual norm of $\|\cdot\|_1$ on $H^k(M, \RR)$.
The following theorem is a special case of the main theorem of \cite[\S4]{Federer1974} but it is essential to us, so we sketch the proof.

\begin{theorem}
For every $\rho \in H^k(M, \RR)$,
$$\|\rho\|_\infty = \min_{[F] = \rho} \|F\|_{L^\infty_*},$$
where $F$ ranges over closed measurable $k$-forms of class $\rho$.
\end{theorem}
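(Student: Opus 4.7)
The plan is to prove the equality via a two-sided bound, establishing attainment of the infimum as a byproduct. Write $\nu(\rho) := \inf_{[F] = \rho} \Comass(F)$.

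\emph{Easy direction and attainment.} For any closed $k$-form $F$ with $[F] = \rho$ and any $(d-k)$-current $T$ of finite mass with $\PD([T]) = \theta$, the duality pairing and the definition of mass give
$$\langle \rho, \theta\rangle = \int_M T \wedge F \leq \Comass(F) \cdot \Mass(T).$$
Taking the infimum over $T$ and then the supremum over $\theta$ in the unit ball of the stable norm yields $\|\rho\|_\infty \leq \nu(\rho)$. To see the infimum in $\nu(\rho)$ is attained, take a minimizing sequence $(F_n)$. The pointwise comass norm on $\Lambda^k T_x^* M$ is equivalent to the Riemannian norm (equivalence of norms on a finite-dimensional space, uniform in $x$ by orthogonal invariance), so $\|F_n\|_{L^\infty}$ is bounded. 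Extract a weak-$*$ limit $F \in L^\infty(M, \Omega^k)$; closedness and the cohomology class are preserved by pairing against smooth test forms, and the comass is weak-$*$ lower semicontinuous because $\Comass(F) = \sup\{\int_M T \wedge F : T \text{ rectifiable}, \Mass(T) \leq 1\}$ is a supremum of weak-$*$ continuous functionals.

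\emph{Hard direction via Hahn--Banach duality.} Since $H^k(M, \RR)$ is finite-dimensional, to prove $\nu \leq \|\cdot\|_\infty$ it suffices to show the dual seminorm $\nu^*$ on $H_k(M, \RR)$ equals $\|\cdot\|_1$. The inequality $\nu^* \leq \|\cdot\|_1$ follows immediately from $\nu \geq \|\cdot\|_\infty$. For the reverse, fix $\theta \in H_k(M, \RR)$ and define $L_\theta(F) := \langle [F], \theta\rangle$ on the subspace of continuous closed $k$-forms; this satisfies $|L_\theta(F)| \leq \nu^*(\theta) \nu([F]) \leq \nu^*(\theta) \Comass(F)$. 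Extend $L_\theta$ by Hahn--Banach to a linear functional $\tilde L_\theta$ on all of $C^0(M, \Omega^k)$, preserving the comass bound. Since the comass is equivalent to the pointwise Riemannian norm, $\tilde L_\theta$ is continuous on $C^0$ sections of a vector bundle, so by the Riesz representation theorem it is represented by a $(d-k)$-current $T$ of finite mass with $\Mass(T) \leq \nu^*(\theta)$. Because $L_\theta$ vanishes on exact continuous forms, so does $\tilde L_\theta$, forcing $\dif T = 0$; and the identity $\int_M T \wedge F = \langle [F], \theta\rangle$ on closed $F$ gives $\PD([T]) = \theta$. Hence $\|\theta\|_1 \leq \Mass(T) \leq \nu^*(\theta)$, as needed.

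The main obstacle is in the hard direction: the Hahn--Banach extension must be identified with an honest current, not merely a functional in the pathological dual of $L^\infty$. This forces one to extend over continuous (rather than $L^\infty$) forms, where the Riesz representation theorem applies, and to check that $\tilde L_\theta$ correctly vanishes on exact forms so that the resulting current is closed with the prescribed cohomology class. All weak-$*$ convergence steps and the comass-to-$L^\infty$ equivalence used along the way are ultimately consequences of finite-dimensionality of fibers and of cohomology.
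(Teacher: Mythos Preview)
Your proof is correct, and while both you and the paper invoke Hahn--Banach, the two applications are genuinely different. The paper works on the primal side: it defines a functional $\Psi$ on closed $L^1$ $(d-k)$-forms by $\Psi(\kappa) = \langle \rho, \PD([\kappa])\rangle$, bounds it by $\|\rho\|_\infty \Mass(\kappa)$, and then extends by Hahn--Banach to all of $L^1(M, \Omega^{d-k})$; the resulting extension \emph{is} the desired $L^\infty$ form $F$, so attainment and the inequality $\Comass(F) \leq \|\rho\|_\infty$ come out in a single stroke. You instead work on the dual side: you fix a homology class $\theta$, use Hahn--Banach and Riesz on $C^0$ forms to produce a closed current $T$ realizing $\|\theta\|_1 \leq \nu^*(\theta)$, and then appeal to bidual reflexivity in finite dimensions to conclude $\nu = \|\cdot\|_\infty$; attainment is a separate weak-$*$ compactness step. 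The paper's route is shorter and gives the minimizer directly, while yours has the pleasant side effect of independently re-proving that the stable norm infimum is attained by a closed current.
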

\begin{proof}
For each representative $F$ of $\rho$, and with $T$ ranging over all $d - k$-currents,
$$\|\rho\|_\infty = \sup_{\substack{\Mass(T) \leq 1 \\ \dif T = 0}} \int_M T \wedge F,$$
so in particular $\|\rho\|_\infty \leq \|F\|_{L^\infty_*}$.
Conversely, for each $\kappa \in L^1(M, \Omega^{d - k})$ such that $\dif \kappa = 0$, let 
$$\Psi(\kappa) := \langle \rho, \PD([\kappa])\rangle.$$
Then 
$$|\Psi(\kappa)| \leq \|\rho\|_\infty \|\PD([\kappa])\|_1 \leq \|\rho\|_\infty \Mass(\kappa)$$
so by the Hanh-Banach theorem, there exists $F \in L^\infty(M, \Omega^k)$ such that $\|F\|_{L^\infty_*} \leq \|\rho\|_\infty$ and for every $\kappa \in L^1(M, \Omega^{d - k})$ such that $\dif \kappa = 0$,
$$\int_M \kappa \wedge F = \langle \rho, \PD([\kappa])\rangle.$$
This implies that $\dif F = 0$ and $[F] = \rho$.
\end{proof}

\subsection{Laminations}\label{sec: lamination review}
We use roughly the same formalism for laminations as in \cite{Morgan88}, which we also used in \cite{BackusCML}.
Let $M$ be a Riemannian manifold.
Fix an interval $I \subset \RR$ and a box $J \subset \RR^{d - 1}$.
A (codimension-$1$, Lipschitz) \dfn{laminar flow box} is a Lipschitz coordinate chart $\Psi: I \times J \to M$ and a compact set $K \subseteq I$, called the \dfn{local leaf space}, such that for each $k \in K$, $\Psi|_{\{k\} \times J}$ is a $C^1$ embedding, and the \dfn{leaf} $\Psi(\{k\} \times J)$ is a $C^1$ complete hypersurface in $\Psi(I \times J)$.
Two laminar flow boxes belong to the same \dfn{laminar atlas} if the transition map preserves the local leaf spaces.

\begin{definition}
A (codimension-$1$, Lipschitz) \dfn{lamination} $\lambda$ is a closed nonempty set $\supp \lambda$ and a maximal laminar atlas $\{(\Psi_\alpha, K_\alpha): \alpha \in A\}$ such that
$$\supp \lambda \cap \Psi_\alpha(I \times J) = \Psi_\alpha(K_\alpha \times J).$$
\end{definition}

Note carefully that the leaves of a lamination will typically not be embedded, but merely injectively immersed. 
The following theorem allows us to construct laminations without explicitly constructing their flow boxes, provided that the leaves are minimal hypersurfaces.

\begin{theorem}[{\cite[Theorem A]{BackusCML}}]\label{disjoint surfaces are lamination}
Let $\mathcal S$ be a set of disjoint minimal hypersurfaces in $M$, such that $\bigcup_{N \in \mathcal S} N$ is a closed set, and $\sup_{N \in \mathcal S} \|\Two_N\|_{C^0} < \infty$.
Then $\mathcal S$ is the set of leaves of a Lipschitz lamination $\lambda$, and the normal vector to the leaves of $\lambda$ extends to a Lipschitz section of a line bundle over $M$.
\end{theorem}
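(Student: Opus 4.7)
The plan is to exhibit Lipschitz laminar flow boxes at each point of $\bigcup_{N \in \mathcal S} N$ by writing nearby leaves as a totally ordered family of minimal graphs, and then to extend the resulting normal vector field globally.

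First, the uniform bound $H := \sup_{N \in \mathcal S} \|\Two_N\|_{C^0} < \infty$ together with the curvature tensor of $M$ gives a uniform lower bound $r_0 > 0$ on the normal injectivity radius of each leaf. Fix $x \in \bigcup_N N$ and pick a leaf $N_0 \ni x$. In Fermi coordinates based on $N_0$ over a ball $B \subset N_0$ of radius $\ll r_0$, any leaf $N \in \mathcal S$ meeting the Fermi neighborhood can be written as the graph of a function $u_N : B \to \RR$ solving the minimal surface equation, with $\|u_N\|_{C^{1,1}}$ controlled by $H$. Disjointness combined with the strong maximum principle for the minimal surface operator forces $\{u_N\}$ to be totally ordered on $B$. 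Parameterizing by $K := \{u_N(x_0)\}$ for a basepoint $x_0$, which is compact because $\bigcup_N N$ is closed, the chart $\Psi(k, y) := \exp_y^{\perp}(u_k(y)\,\normal_{N_0}(y))$ is then a Lipschitz laminar flow box at $x$, once $k \mapsto u_k$ is shown to be continuous in $C^0(B)$.

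Second, Lipschitz dependence of the graphs $u_k$ on the parameter $k$ comes from the observation that the difference $v := u_{k_1} - u_{k_2}$ of two disjoint minimal graphs satisfies a linear, uniformly elliptic equation whose coefficients are bounded in terms of $H$, and $v$ is signed by the ordering. The Harnack inequality and interior Schauder estimates thus yield $\|v\|_{C^{1,\alpha}(B/2)} \lesssim_H \|v\|_{C^0(B)}$. Consequently the normal map $\normal: \bigcup_N N \to S(TM)$ sending a point to the unit normal of its leaf is Lipschitz on each flow box; choosing orientations consistently across overlapping charts yields a Lipschitz unit section of a line subbundle defined on $\bigcup_N N$. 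A partition of unity on the cover by flow boxes plus Kirszbraun's theorem on the surrounding open set then extends this section to a Lipschitz section of a line bundle over all of $M$ (the line bundle being the one spanned pointwise by the extended section, with any Lipschitz extension acceptable off of $\bigcup_N N$).

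The main obstacle is promoting the merely pointwise ordering and the $C^0$ bound on second fundamental forms into a quantitative Lipschitz comparison between distinct leaves. A purely geometric argument is insufficient, since two $C^{1,1}$ hypersurfaces whose distance to each other is small need not have normals that vary Lipschitz-continuously as one moves transversally; one really has to use the minimal surface PDE, via the Harnack chain applied to $v = u_{k_1} - u_{k_2}$, to close the loop between the transverse separation of leaves and the regularity of the associated unit normal field.
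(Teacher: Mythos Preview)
This theorem is not proved in the present paper: it is quoted as \cite[Theorem A]{BackusCML} and used as a black box, so there is no ``paper's own proof'' to compare against here. Your outline is nonetheless the standard route to such a result and is essentially what one finds in \cite{BackusCML}: uniform second fundamental form bounds give a uniform graphical scale, disjointness plus the strong maximum principle forces the graphs to be totally ordered, and the linearized minimal surface equation for the difference $v = u_{k_1} - u_{k_2}$ supplies the transverse Lipschitz regularity.

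One point deserves tightening. You write that Harnack and Schauder give $\|v\|_{C^{1,\alpha}(B/2)} \lesssim_H \|v\|_{C^0(B)}$ and then conclude that the normal field is Lipschitz; but that estimate alone is only Schauder, and it does not yet control $\|v\|_{C^0(B)}$ in terms of the transverse parameter $|k_1 - k_2| = |v(x_0)|$. The missing step is precisely the Harnack inequality applied to the signed function $v$, which yields $\sup_{B'} v \lesssim_H \inf_{B'} v \leq v(x_0)$ on a slightly smaller ball $B'$, so that $\|v\|_{C^1} \lesssim_H |k_1 - k_2|$. You allude to this in your final paragraph (``Harnack chain applied to $v$''), so the idea is present, but the logical order in the second paragraph obscures where Harnack actually enters. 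Also, the extension of the normal to all of $M$ via Kirszbraun is a little glib: Kirszbraun applies to maps into Hilbert spaces, not to sections of nontrivial line bundles, so one should either work in local trivializations and patch, or first extend the normal as a Lipschitz vector field (not necessarily unit) and then take the line it spans.
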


In our application, $d \leq 7$ and the hypersurfaces in $\mathcal S$ are stable, so we can check the hypothesis on curvature in Theorem \ref{disjoint surfaces are lamination} using \cite{Schoen81}.
In my experience, it is a common misconception that the hypothesis on curvature can be removed, but the next example shows that it cannot be.

\begin{example}\label{ex: stack of catenoids}
Let $M$ be the unit ball of $\RR^3$, let
\begin{align*}
\iota_n: \RR \times \Sph^1 &\to \RR^3 \\
(z, \theta) &\mapsto (2^{-n} \cosh(2^n z) \cos \theta, 2^{-n} \cosh(2^n z) \sin \theta, z + 1/n)
\end{align*}
and let $N_n := M \cap (\iota_n)_*(\RR \times \Sph^1)$.
So if $\iota_n(z, \theta) \in N_n$ then $2^{-n} \cosh(2^n z) \leq 1$.
In this case, if $n \geq 3$ then
$$2^n z \leq \arcosh(2^n) \leq n,$$
and in particular
$$N_n \subset \{(x, y, z) \in M: 1/n - n/2^n \leq z \leq 1/n + n/2^n\}.$$
It follows that if $n, m \geq 12$ then $N_n \cap N_m = \emptyset$.
Let $\mathcal S$ be the set of all $N_n$s where $n \geq 12$, and the $z$-axis; one could call such a structure a \dfn{stack of catenoids}.
A stack of catenoids is not a lamination, even though its leaves are disjoint and minimal, and have closed union.
The leaves of a stack of catenoids have Morse index $\leq 1$ and area $\leq 5\pi$, which is ``almost as good'' as having bounded Gaussian curvature, but even this is not enough.
\end{example}

An arbitrary lamination cannot be viewed as a current, but following Ruelle and Sullivan \cite{Ruelle75}, we view laminations which have been equipped with transverse measures and orientations as currents, so our next task is to define the Ruelle-Sullivan current.

\begin{definition}
Let $\lambda$ be a lamination with laminar atlas $\{(\Psi_\alpha, K_\alpha): \alpha \in A\}$. Then:
\begin{enumerate}
\item $\lambda$ is equipped with an \dfn{orientation} if the transition maps $\Psi_\alpha^{-1} \circ \Psi_\beta$ are orientation-preserving.
\item A \dfn{transverse measure} $\mu$ to $\lambda$ consists of Radon measures $\mu_\alpha$ on each local leaf space $K_\alpha$, such that the transition maps $\Psi_\alpha^{-1} \circ \Psi_\beta$ send $\mu_\beta$ to $\mu_\alpha$, and $\supp \mu_\alpha = K_\alpha$.
The pair $(\lambda, \mu)$ is a \dfn{measured lamination}.
\item Suppose that $\lambda$ is oriented, $\mu$ is a transverse measure to $\lambda$, and $\{\chi_\alpha: \alpha \in A\}$ is a partition of unity subordinate to $\{\Psi_\alpha(I \times J): \alpha \in A\}$. The \dfn{Ruelle-Sullivan current} $T_\mu$ acts on $\varphi \in C^0_\cpt(M, \Omega^{d - 1})$ by 
$$\int_M T_\mu \wedge \varphi := \sum_{\alpha \in A} \int_{K_\alpha} \left[\int_{\{k\} \times J} (\Psi_\alpha^{-1})^* (\chi_\alpha \varphi)\right] \dif \mu_\alpha(k).$$
\end{enumerate}
\end{definition}

It bears repeating that in our convention, a lamination $\lambda$ is nonempty, and if $\mu$ is a transverse measure to $\lambda$, then $\supp \mu = \supp \lambda$.

Let $(\lambda, \mu)$ be a measured oriented lamination.
It is a straightforward modification of the arguments of \cite[\S8]{daskalopoulos2020transverse} to show that the Ruelle-Sullivan current $T_\mu$ is a closed $1$-current which is well-defined, in the sense that $T_\mu$ does not depend on the choice of laminar atlas.
Furthermore, by \cite[Lemma 3.1]{BackusCML},
\begin{equation}\label{decomposition of Ruelle Sullivan}
T_\mu = \normal_\lambda^\flat \mu
\end{equation}
where $\normal_\lambda^\flat$ is the conormal $1$-form to $\lambda$ and $\mu(U) := \int_U \star |T_\mu|$ for every open set $U$.
Often we leave $\mu$ implicit and just write $T_\lambda$ for $T_\mu$.

Let $\lambda$ be a lamination.
A Borel set $E \subseteq \supp \lambda$ is \dfn{saturated} if, for every leaf $N$ of $\lambda$ such that $N \cap E$ is nonempty, $N \subseteq E$.
Every leaf of $\lambda$ is Borel, and therefore saturated.
A \dfn{sublamination} of $\lambda$ is a closed saturated set.
Every sublamination of $\lambda$ inherits the flow boxes of $\lambda$ and therefore is itself a lamination.

\begin{lemma}\label{existence of intersections}
Let $\mathscr S$ be a nonempty set of laminations.
Suppose that there exists a hypersurface which is a leaf of every lamination in $\mathscr S$.
Then there exists a lamination whose set of leaves is the intersection of the sets of leaves of the laminations in $\mathscr S$.
\end{lemma}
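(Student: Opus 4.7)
The plan is to fix an auxiliary reference lamination $\mu_0 \in \mathscr S$ and to build the desired lamination by restricting $\mu_0$'s atlas to those leaves which happen to lie in $\mathcal T := \bigcap_{\mu \in \mathscr S} \{\text{leaves of } \mu\}$. By hypothesis $\mathcal T$ is nonempty, any two of its elements are disjoint because they are already distinct leaves of $\mu_0$, and $\bigcup_{N \in \mathcal T} N \subseteq \supp \mu$ for every $\mu \in \mathscr S$. For $x \in \bigcup_{N \in \mathcal T} N$, pick a laminar flow box $(\Psi : I \times J \to M, K_0)$ of $\mu_0$ through $x$ and set
$$K' := \{k \in K_0 : \text{the leaf of } \mu_0 \text{ meeting } \Psi(\{k\} \times J) \text{ belongs to } \mathcal T\} \subseteq I.$$
Since $\mathcal T$ consists of leaves of $\mu_0$, every leaf of $\mathcal T$ meeting $\Psi(I \times J)$ is of the form $\Psi(\{k\} \times J)$ for some $k \in K'$, and transitions between two such candidate charts are simply $\mu_0$-transitions and therefore preserve local leaf spaces. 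Consequently the entire lemma reduces to the single statement that $K'$ is closed in $I$: that one fact will simultaneously yield closedness of $\bigcup_{N \in \mathcal T} N$ and all the axioms of a Lipschitz lamination with leaf set equal to $\mathcal T$.

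To verify closedness of $K'$, I would take a sequence $k_n \to k$ in $I$ with $k_n \in K'$ and show that the $\mu_0$-leaf $N$ through $\Psi(\{k\} \times J)$ is a leaf of every $\mu \in \mathscr S$. Fix such a $\mu$, and let $N_n \in \mathcal T$ be the leaf of $\mu_0$ through $\Psi(\{k_n\} \times J)$. Then $N_n \subseteq \supp \mu$ for each $n$, so closedness of $\supp \mu$ gives $\Psi(\{k\} \times J) \subseteq \supp \mu$. Working now in a flow box of $\mu$ around any $y \in \Psi(\{k\} \times J)$, the local leaf parameters $j_n$ of $N_n$ lie in the compact local leaf space $K^\mu$ of $\mu$ and converge (after extraction) to some $j \in K^\mu$; comparing this with the $\mu_0$-picture should then show that $N$ coincides near $y$ with the $\mu$-leaf $N^\mu_y$ indexed by $j$.

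The technical heart of the argument, and the step I expect to be the main obstacle, is upgrading this \emph{local} coincidence to a \emph{global} equality of $N$ with a single leaf of $\mu$. The strategy will be to let $A \subseteq N$ be the set of $z$ at which $N$ locally agrees with some leaf of $\mu$; the previous paragraph shows $A$ is nonempty, the joint flow box structure shows $A$ is open in $N$, and closedness of $A$ should follow by reapplying the limit argument pointwise along $N$. The delicate issue is that the $\mu$-leaf identified at each $z \in A$ must patch consistently as $z$ varies along $N$, since a priori nothing forces a $\mu_0$-leaf to propagate along a single $\mu$-leaf; handling this will require exploiting the uniqueness of the local leaf through a given point together with connectedness of $N$. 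Once $A$ is shown to be open and closed, connectedness of $N$ forces $A = N$, so $N$ is a leaf of $\mu$; as $\mu$ was arbitrary, $N \in \mathcal T$, $K'$ is closed, and the lemma follows.
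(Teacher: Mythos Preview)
Your approach is essentially the paper's: fix a reference lamination $\lambda\in\mathscr S$ and restrict its atlas. The only real difference is in how compactness of the restricted local leaf space is obtained. The paper defines $K'_\alpha$ by the \emph{local} condition that the plaque $\Psi_\alpha(\{k\}\times J)$ be contained in some leaf of $\kappa$, separately for each $\kappa\in\mathscr S$, and then observes that $K'_\alpha$ is compact as an intersection over $\kappa$ of compact sets (each one compact because the local leaf spaces of $\kappa$ are compact). This one-line argument replaces your sequential limit and bypasses the local-to-global step you flag, since compactness is established without ever following a global leaf. The identification of the resulting leaf set with $\mathcal T$ is left implicit in the paper but follows exactly from the disjointness-plus-connectedness reasoning you sketch: the consistency worry dissolves because distinct leaves of $\kappa$ are disjoint, so the locally identified $\kappa$-leaf through points of $N$ is constant on overlaps and hence globally constant by connectedness of $N$.
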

\begin{proof}
Let $\lambda \in \mathscr S$, and let $(\Psi_\alpha, K_\alpha)_{\alpha \in A}$ be a laminar atlas for $\lambda$.
Let $K'_\alpha$ be the set of $k \in K_\alpha$ such that for every $\kappa \in \mathscr S$, there exists a leaf $N$ of $\kappa$ such that
$$(\Psi_\alpha)_*(\{k\} \times J) \subseteq N.$$
It is clear that this property is preserved by transition maps.
Then $K_\alpha'$ is an intersection of compact sets (since the local leaf space of each $\kappa \in \mathscr S$ is compact), so $K_\alpha'$ is compact.
The hypersurface which is a common leaf of every lamination in $\mathscr S$ witnesses that for some $\alpha$, $K_\alpha'$ is nonempty.
Therefore $(\Psi_\alpha, K'_\alpha)_{\alpha \in A}$ is a laminar atlas.
The fact that $K_\alpha'$ is compact for every $\alpha$ implies that the supposed lamination whose atlas is $(\Psi_\alpha, K'_\alpha)_{\alpha \in A}$ has a closed support.
\end{proof}

We shall also need a form of the \dfn{Morgan--Shalen decomposition}, \cite[Theorem I.3.2]{Morgan88}, of a measured lamination.
To formulate it, let us say that a lamination $\lambda$ is \dfn{exceptional}\footnote{Exceptional laminations are often called \dfn{minimal}, but that clashes with the use of the word ``minimal'' to refer to vanishing mean curvature, so we have not adopted this terminology.} if every leaf of $\lambda$ is dense in $\supp \lambda$, and $\lambda$ is not a single closed leaf.
A lamination $\lambda$ is a \dfn{parallel family of closed leaves} if there exists a closed leaf $N$ of $\lambda$ with trivial normal bundle, such that every leaf of $\lambda$ is a section of the normal bundle of $N$.

\begin{theorem}[Morgan--Shalen decomposition]\label{MorganShelan}
Suppose that $M$ is closed and oriented.
For every measured oriented lamination $\lambda$, one of the following holds:
\begin{enumerate}
\item $\lambda$ is a foliation with a dense leaf.
\item $\lambda$ is the disjoint union of finitely many clopen sublaminations $\kappa$, such that either $\kappa$ is exceptional, or $\kappa$ is a parallel family of closed leaves.
\end{enumerate}
\end{theorem}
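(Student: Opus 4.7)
The plan is to adapt the classical Morgan--Shalen structure theorem to our codimension-$1$ Lipschitz measured-lamination setting, organizing the argument by first extracting the closed-leaf part and then analyzing the remaining leaves via their orbit closures.

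For the closed part, I would let $\lambda_{\rm cl}$ be the closure in $\supp \lambda$ of the union of all closed leaves of $\lambda$. Applying Theorem \ref{disjoint surfaces are lamination} to sequences of closed leaves (whose second fundamental forms are uniformly bounded by the Lipschitz structure of $\lambda$) shows that $\lambda_{\rm cl}$ is a sublamination. Where its local leaf space has nonempty interior, $\lambda_{\rm cl}$ is locally foliated by closed leaves, carving out maximal parallel families of closed leaves in the sense of the statement; a finite cover of $M$ by flow boxes, combined with the fact that each parallel family occupies an interval of its local leaf space, bounds the number of such families. For the non-closed part, each non-closed leaf $N$ has closure $\overline N$ that is a sublamination by Lemma \ref{existence of intersections} and contains no isolated leaf, so $\overline N$ is either exceptional or equals $\supp \lambda$. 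In the latter case, if $\supp \lambda = M$ then we are in conclusion (1); otherwise I introduce the equivalence relation $N_1 \sim N_2 \iff \overline{N_1} = \overline{N_2}$ on the non-closed leaves, whose equivalence classes are exceptional sublaminations.

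The main obstacle is to verify that the parallel families and the equivalence classes together give a \emph{clopen, finite} partition of $\supp \lambda$. Clopenness rests on the claim that in a measured codimension-$1$ lamination no leaf accumulates on a minimal sublamination to which it does not belong: non-crossing of leaves in codimension $1$ forces any supposed accumulating leaf $N$ to spiral toward a minimal set $\kappa$ from one side, and the local decomposition $T_\mu = \normal_\lambda^\flat \mu$ from (\ref{decomposition of Ruelle Sullivan}) then gives a Poincar\'e-type obstruction, since holonomy-invariance of $\mu$ implies that $\mu$-almost every transversal point is recurrent and so cannot be wandering toward $\kappa$. For finiteness, each exceptional class and each parallel family contributes strictly positive $\mu$-mass on any transversal meeting it, and since the pieces are disjoint, $\Mass(T_\mu) < \infty$ bounds their number. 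This recurrence argument also simultaneously handles stray non-closed leaves lying outside every minimal sublamination: they must be recurrent and hence absorbed into the equivalence class of some exceptional component, so no leaves of $\lambda$ are omitted from the decomposition.
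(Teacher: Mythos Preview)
The paper's proof is essentially a two-line citation: it invokes \cite[Theorem I.3.2]{Morgan88} directly for any lamination without a leaf dense in $M$, and then observes that orientability of $\lambda$ forces the normal bundle of each closed leaf to be trivial, which is what makes each closed-leaf component a \emph{parallel family} in the sense defined. Your proposal instead attempts a from-scratch reconstruction of the Morgan--Shalen argument.

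There are concrete errors in the tools you invoke. Theorem~\ref{disjoint surfaces are lamination} applies only to collections of \emph{minimal} hypersurfaces with a uniform bound on $\|\Two_N\|_{C^0}$; in the general measured-oriented-lamination setting of Theorem~\ref{MorganShelan} the leaves are merely $C^1$ and need not be minimal, so second fundamental forms are not controlled (or even defined), and that theorem is unavailable. Likewise, Lemma~\ref{existence of intersections} concerns intersections of a family of laminations sharing a common leaf, not the closure of a single leaf; it does not establish that $\overline N$ is a sublamination. Both facts you want (that the closure of the union of closed leaves is a sublamination, and that $\overline N$ is a sublamination) are true, but they follow directly from the flow-box definition and need no such machinery.

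More substantively, you omit the one step the paper does spell out: nothing in your argument explains why a clopen piece consisting of closed leaves has \emph{trivial} normal bundle, which is part of the definition of ``parallel family of closed leaves.'' This is exactly where the orientation hypothesis enters, and without it the conclusion as stated does not follow. Your recurrence argument for ruling out spiraling leaves is on the right track (full support of $\mu$ plus holonomy invariance does preclude one-sided accumulation), but the dichotomy ``$\overline N$ is exceptional or equals $\supp\lambda$'' is not justified: a priori $\overline N$ could be a proper sublamination containing leaves not dense in it, and you need the no-spiraling conclusion \emph{first} to rule this out.
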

\begin{proof}
First observe that the proof of \cite[Theorem I.3.2]{Morgan88} goes through for any lamination $\lambda$ such that no leaf of $\lambda$ is dense in $M$, even if $\lambda$ is a foliation.
It then remains to rule out the case that $\kappa$ is a family of sections of a nontrivial normal bundle of a closed leaf: this holds because $\lambda$ is oriented.
\end{proof}

\subsection{Perimeter-minimizing sets}
In this section only, $M$ denotes a complete Riemannian manifold of bounded curvature; we do not assume that $d = 7$ or $M$ is closed.
A Borel set $U \subseteq M$ is \dfn{perimeter-minimizing} if for every ball $B \subseteq M$ and every Borel set $V \subseteq M$ such that $1_U = 1_V$ on $\partial B$, $\vol(\partial U) \leq \vol(\partial V)$ in the sense of currents.
In the proof of Theorem \ref{Auer Bangert thm}(\ref{ABt2}), we shall need an estimate on perimeter-minimizing sets, which we now prove; the reader can ignore this section until then.
See \cite[Chapter 5]{Giusti77} for the proof when $M$ is an open subset of euclidean space.

\begin{lemma}
There are constants $\delta, c > 0$ which only depend on $d$ such that for every $r \in (0, \delta \|\Riem_M\|_{C^0}^{-1/2}]$ and $x \in M$ such that $\dist(x, \partial M) > r$,
\begin{equation}\label{reverse isoperimetric inequality}
\vol(U \cap B(x, r)) \geq cr^d.
\end{equation}
\end{lemma}
\begin{proof}
If we take $\delta$ small enough, then we can approximate $B(x, r)$ by a euclidean ball so well that, by the euclidean isoperimetric inequality, for every $0 < \rho \leq r$,
$$\vol(\partial(U \cap B(x, \rho))) \geq \frac{1}{2c_d} \vol(U \cap B(x, \rho))^{\frac{d - 1}{d}},$$
where $c_d > 0$ is the euclidean isoperimetric constant.
We can reason as in the proof of \cite[Proposition 5.14]{Giusti77} to see that for almost every $0 < \rho < r$,
$$\frac{\dif}{\dif \rho} \vol(U \cap B(x, \rho)) \geq \frac{1}{2} \vol(\partial(U \cap B(x, \rho))) \geq \frac{1}{4c_d} \vol(U \cap B(x, \rho))^{\frac{d - 1}{d}}.$$
Let $f(\rho) := \vol(U \cap B(x, \rho))^{1/d}$, so that 
$$f'(\rho) = \frac{\vol(U \cap B(x, \rho)^{\frac{d - 1}{d}})}{d} \frac{\dif}{\dif \rho} \vol(U \cap B(x, \rho)) \geq \frac{1}{4dc_d}.$$
Therefore $f(\rho) \geq \rho/(4dc_d)$, as desired.
\end{proof}

\begin{lemma}\label{unbounded implies infinite measure}
For every unbounded perimeter-minimizing set $U \subseteq M$, $\vol(U) = \infty$.
\end{lemma}
\begin{proof}
Let $r := \min(1, \delta \|\Riem_M\|_{C^0}^{-1/2})$ where $\delta$ is as in the previous lemma.
Since $U$ is unbounded, there is an infinite $2r$-separated set $S \subset U$.
Then the set $U \cap \bigcup_{x \in S} B(x, r)$ has infinite volume by (\ref{reverse isoperimetric inequality}).
\end{proof}

\section{Calibrated laminations and functions of least gradient}
\subsection{Calibrated laminations}
Let $M$ be a closed oriented Riemannian manifold.
Let $F$ be a calibration on $M$, and $\lambda$ a measured oriented lamination in $M$.
There are two things that one could conceivably mean by saying that $\lambda$ is $F$-calibrated: that every leaf of $\lambda$ is $F$-calibrated, or that the Ruelle-Sullivan current, $T_\lambda$, is $F$-calibrated.
The purpose of this section is to show that these two notions are equivalent.

\begin{definition}
Let $F \in L^\infty(M, \Omega^{d - 1})$ be a calibration.
A lamination $\lambda$ is \dfn{$F$-calibrated} if every leaf of $\lambda$ is $F$-calibrated.
\end{definition}

By the normal trace theorem, Theorem \ref{integration is welldefined}, this definition makes sense.
Of course, one is only really interested in calibrated laminations if they are mass-minimizing, so now we recall that the \dfn{mass} of a measured oriented lamination $\lambda$ is
$$\Mass(\lambda) := \Mass(T_\lambda).$$
Since a current can be approximated by smooth $1$-forms in the weakstar topology on currents, every current has a cohomology class $[T] \in H^1(M, \RR)$.
Thus, the \dfn{homology class} $[\lambda] \in H_{d - 1}(M, \RR)$ is the Poincar\'e dual of $[T_\lambda]$.

\begin{definition}
Let $\lambda$ be a measured oriented lamination, and assume that $M$ is compact.
Then $\lambda$ is \dfn{homologically minimizing}, if for every measured oriented lamination $\kappa$ such that $[\lambda] = [\kappa]$,
$$\Mass(\lambda) \leq \Mass(\kappa).$$
\end{definition}

Let $(\lambda, \mu)$ be a measured oriented lamination.
Let $(\chi_\alpha)$ be a locally finite partition of unity subordinate to a laminar atlas $(U_\alpha, K_\alpha)$ for $\lambda$.
If $\sigma_{\alpha, k}$ denotes the leaf in $U_\alpha$ corresponding to the real number $k \in K_\alpha$, then the definition of the Ruelle-Sullivan current unpacks as
\begin{equation}\label{coarea formula on laminations}
\int_M T_\lambda \wedge F = \sum_\alpha \int_{K_\alpha} \int_{\sigma_{\alpha, k}} \chi_\alpha F \dif \mu_\alpha(k).
\end{equation}
Since $T_\lambda$ and $F$ are closed, if $M$ is closed, then the left-hand side of (\ref{coarea formula on laminations}) is a homological invariant:
\begin{equation}\label{Ruelle Sullivan homology}
\int_M T_\lambda \wedge F = \langle [F], [\lambda]\rangle.
\end{equation}

\begin{lemma}\label{calibration condition}
Let $F$ be a calibration.
Let $T_\lambda$ be the Ruelle-Sullivan current of a measured oriented lamination $\lambda$.
Then the following are equivalent:
\begin{enumerate}
\item $T_\lambda$ is $F$-calibrated.
\item $\lambda$ is $F$-calibrated.
\end{enumerate}
\end{lemma}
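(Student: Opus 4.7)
The plan is to unpack both sides of the calibration identity $\int_M T_\lambda \wedge F = \Mass(T_\lambda)$ using (\ref{coarea formula on laminations}) and the local decomposition (\ref{decomposition of Ruelle Sullivan}), so that the equation becomes a leafwise comparison. By the normal trace theorem (Theorem \ref{integration is welldefined}) and $\Comass(F)=1$, for each leaf $\sigma_{\alpha,k}$ the pullback $\iota^* F$ is a well-defined element of $L^\infty(\sigma_{\alpha,k}, \Omega^{d-1})$ with
$$\iota^* F \leq d\mathcal{H}^{d-1}|_{\sigma_{\alpha,k}}$$
as signed measures on the oriented leaf, with equality iff $\sigma_{\alpha,k}$ is $F$-calibrated. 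Combining this with (\ref{coarea formula on laminations}) and the fact that $\Mass(T_\lambda) = \mu(\supp\lambda)$ admits the same iterated description via (\ref{decomposition of Ruelle Sullivan}) (i.e.\ unfolds as $\sum_\alpha \int_{K_\alpha}\int_{\sigma_{\alpha,k}} \chi_\alpha\, d\mathcal{H}^{d-1}\, d\mu_\alpha(k)$), gives the master identity
$$\Mass(T_\lambda) - \int_M T_\lambda \wedge F = \sum_\alpha \int_{K_\alpha} \int_{\sigma_{\alpha,k}} \chi_\alpha\bigl(d\mathcal{H}^{d-1}|_{\sigma_{\alpha,k}} - \iota^* F\bigr) d\mu_\alpha(k) \geq 0.$$

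The forward implication is then immediate: if every leaf is $F$-calibrated, each leafwise integrand vanishes identically, so the displayed difference is zero.

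For the reverse implication, calibration of $T_\lambda$ forces the displayed non-negative expression to vanish, so for $\mu_\alpha$-a.e.\ $k \in K_\alpha$ one has $\iota^* F = d\mathcal{H}^{d-1}|_{\sigma_{\alpha,k}}$ on the leaf, i.e.\ $\mu$-a.e.\ leaf is $F$-calibrated. The remaining task is to upgrade ``$\mu$-a.e.\ leaf'' to ``every leaf'' to match the definition. For this I would invoke the hypothesis $\supp \mu_\alpha = K_\alpha$ from the definition of a transverse measure, which makes the set of calibrated parameters dense in the local leaf space $K_\alpha$. Given $k_0 \in K_\alpha$, choose $k_n \to k_0$ with each $\sigma_{\alpha,k_n}$ calibrated, and work in the flow-box coordinates $I \times J$ in which the leaves are affine slices. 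In these coordinates, the defining integration-by-parts formula of the normal trace (Theorem \ref{integration is welldefined}) implies that $\iota_{k_n}^* F \weakto^* \iota_{k_0}^* F$ in $L^\infty(J,\Omega^{d-1})$, while the Jacobians giving $d\mathcal{H}^{d-1}|_{\sigma_{\alpha,k_n}}$ converge to those of $\sigma_{\alpha,k_0}$ in $C^0$; passing to the limit in the equality $\iota_{k_n}^* F = d\mathcal{H}^{d-1}|_{\sigma_{\alpha,k_n}}$ yields calibration of $\sigma_{\alpha,k_0}$.

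The main obstacle is this density-plus-continuity upgrade: unlike in the continuous case, the normal trace of an $L^\infty$ form is only weakstar continuous under $C^1$ perturbation of the hypersurface, so one must verify that the calibration equation survives the limit. I expect to handle this by testing both sides against smooth $1$-parameter families of test forms on $J$ and using the distributional definition of the trace to get integration-by-parts bounds uniform in $k$, which then give the desired weakstar convergence through the flow box.
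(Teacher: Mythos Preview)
Your proposal follows essentially the same architecture as the paper's proof: unfold both $\int_M T_\lambda\wedge F$ and $\Mass(T_\lambda)$ via the coarea formula, use $\Comass(F)=1$ to get a nonnegative integrand, conclude that $\mu_\alpha$-a.e.\ leaf is calibrated, and then upgrade to every leaf using $\supp\mu_\alpha=K_\alpha$.

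The only substantive difference is in the upgrade step. You propose to argue directly that the normal trace $k\mapsto\iota_k^*F$ is weak-* continuous in the leaf parameter. The paper instead invokes the $L^\infty$ Poincar\'e lemma (Theorem~\ref{Hodge theorem}): after refining the flow boxes, write $F=\dif A$ with $A$ \emph{continuous}, and apply Stokes to get $\int_{\sigma_{\alpha,k}}F=\int_{\partial\sigma_{\alpha,k}}A$. Continuity of $A$ and $C^1$ convergence $\partial\sigma_{\alpha,k_j}\to\partial\sigma_{\alpha,k}$ make the limit immediate. Your weak-* continuity claim is correct (indeed, $\int_{\{k_n\}\times J}\phi F-\int_{\{k_0\}\times J}\phi F=\int_{[k_n,k_0]\times J}\dif\phi\wedge F\to 0$ since $\dif F=0$), but the paper's route via the continuous potential is cleaner and sidesteps the analysis you flag as the ``main obstacle.''
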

\begin{proof}
First suppose that $T_\lambda$ is $F$-calibrated.
Let $(\chi_\alpha)$ be a locally finite partition of unity subordinate to an open cover $(U_\alpha)$ of flow boxes for $\lambda$, let $(K_\alpha)$ be the local leaf spaces, and let $(\mu_\alpha)$ be the transverse measure.
After refining $(U_\alpha)$ we may assume that $U_\alpha$ is a ball which satisfies the hypotheses of the $L^\infty$ Poincar\'e lemma, Theorem \ref{Hodge theorem}. After shrinking $U_\alpha$ we may assume that $\chi_\alpha > 0$ on $U_\alpha$.
Then for leaves $\sigma_{\alpha,k}$, we rewrite (\ref{coarea formula on laminations}) as 
$$\Mass(\lambda) = \int_M T_\lambda \wedge F = \sum_\alpha \int_{K_\alpha} \int_{\sigma_{\alpha,k}} \chi_\alpha F \dif \mu_\alpha(k).$$
Let $\dif S_{\alpha,k}$ be the surface measure on $\sigma_{\alpha,k}$.
Then
$$\int_M \chi_\alpha \star |T_\lambda| = \int_{K_\alpha} \int_{\sigma_{\alpha,k}} \chi_\alpha \dif S_{\alpha,k} \dif \mu_\alpha(k),$$
so summing in $\alpha$, we obtain 
\begin{equation}\label{calibration condition contr}
\sum_\alpha \int_{K_\alpha} \int_{\sigma_{\alpha,k}} \chi_\alpha F \dif \mu_\alpha(k) = \Mass(\lambda) = \sum_\alpha \int_{K_\alpha} \int_{\sigma_{\alpha,k}} \chi_\alpha \dif S_{\alpha,k} \dif \mu_\alpha(k).
\end{equation}

We claim that $\lambda$ is \dfn{almost calibrated} in the sense that for every $\alpha$ and $\mu_\alpha$-almost every $k$, $\sigma_{\alpha, k}$ is calibrated.
If this is not true, then we may select $\beta$ and $K \subseteq K_\beta$ with $\mu_\beta(K) > 0$, such that for every $k \in K$, $\int_{\sigma_{\beta, k}} F < \vol(\sigma_{\beta, k})$.
Since $0 < \chi_\beta \leq 1$ and $F/\dif S_{\beta, k} \leq 1$ on $\sigma_{\beta, k}$, this is only possible if 
$$\int_{\sigma_{\beta, k}} \chi_\beta F < \int_{\sigma_{\beta, k}} \chi_\beta \dif S_{\beta, k}.$$
Integrating over $K$, and using the fact that in general we have $\int_{\sigma_{\alpha, k}} \chi_\alpha F \leq \int_{\sigma_{\alpha, k}} \chi_\alpha \dif S_{\alpha, k}$, we conclude that 
$$\sum_\alpha \int_{K_\alpha} \int_{\sigma_{\alpha, k}} \chi_\alpha F \dif \mu_\alpha(k) < \sum_\alpha \int_{K_\alpha} \int_{\sigma_{\alpha, k}} \chi_\alpha \dif S_{\alpha, k} \dif \mu_\alpha(k)$$
which contradicts (\ref{calibration condition contr}).

To upgrade $\lambda$ from an almost calibrated lamination to a calibrated lamination, we first, given $\sigma_{\alpha, k}$, choose $k_j$ such that $\sigma_{\alpha, k_j}$ is calibrated and $k_j \to k$.
By Theorem \ref{Hodge theorem}, we can find a continuous $d - 2$-form $A$ defined near $\sigma_{\alpha, k}$ with $F = \dif A$.
This justifies the following application of Stokes' theorem: 
$$\int_{\sigma_{\alpha, k}} F = \int_{\partial \sigma_{\alpha, k}} A.$$
Since $k_j \to k$, and $A$ is continuous,
\begin{align*}
\vol(\sigma_{\alpha, k}) &= \lim_{j \to \infty} \vol(\sigma_{\alpha, k_j}) = \lim_{j \to \infty} \int_{\sigma_{\alpha, k_j}} F = \lim_{j \to \infty} \int_{\partial \sigma_{\alpha, k_j}} A = \int_{\partial \sigma_{\alpha, k}} A = \int_{\sigma_{\alpha, k}} F.
\end{align*}

To establish the converse, suppose that $\lambda$ is $F$-calibrated, and let notation be as above.
Since $\lambda$ is $F$-calibrated, for every $\alpha$ and every $k$, the area form on $\sigma_{\alpha, k}$ is $F$. Therefore
\begin{align*}
\int_M T_\lambda \wedge F &= \sum_\alpha \int_{K_\alpha} \int_{\sigma_{\alpha, k}} \chi_\alpha F \dif \mu_\alpha(k) = \Mass(T_\lambda). \qedhere
\end{align*}
\end{proof}

\begin{lemma}\label{properties of calibrated laminations}
Suppose that $M$ is closed.
Let $F$ be a calibration, and let $\lambda$ be a measured oriented $F$-calibrated lamination.
Then:
\begin{enumerate}
\item $\lambda$ is homologically minimizing.
\item If $G$ is a calibration and cohomologous to $F$, then $\lambda$ is $G$-calibrated.
\end{enumerate}
\end{lemma}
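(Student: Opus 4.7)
The plan is to deduce both assertions from the combination of the calibration inequality $\int_M T \wedge F \leq \Mass(T)$, the identification of the Ruelle--Sullivan pairing with the cohomological pairing from (\ref{Ruelle Sullivan homology}), and the equivalence supplied by Lemma \ref{calibration condition}. All three facts have already been set up, so the proof should be a short chain of equalities and one inequality.

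For part (1), I would first invoke Lemma \ref{calibration condition} to pass from the hypothesis that every leaf of $\lambda$ is $F$-calibrated to the statement that the Ruelle--Sullivan current satisfies $\int_M T_\lambda \wedge F = \Mass(T_\lambda) = \Mass(\lambda)$. Next, take an arbitrary measured oriented lamination $\kappa$ with $[\kappa] = [\lambda]$. Because $M$ is closed and $F$ is closed, the pairing $\int_M T \wedge F$ only depends on the Poincar\'e dual of $[T]$ by (\ref{Ruelle Sullivan homology}), so
\[
\Mass(\lambda) = \int_M T_\lambda \wedge F = \langle [F],[\lambda]\rangle = \langle [F],[\kappa]\rangle = \int_M T_\kappa \wedge F.
\]
The calibration inequality, applied to $T_\kappa$, gives $\int_M T_\kappa \wedge F \leq \Comass(F)\,\Mass(T_\kappa) = \Mass(\kappa)$, which is exactly the desired bound $\Mass(\lambda)\leq \Mass(\kappa)$.

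For part (2), the same computation forces $G$-calibration. Since $[G]=[F]$, applying (\ref{Ruelle Sullivan homology}) to both forms yields
\[
\int_M T_\lambda \wedge G = \langle [G],[\lambda]\rangle = \langle [F],[\lambda]\rangle = \int_M T_\lambda \wedge F = \Mass(\lambda).
\]
But $\int_M T_\lambda \wedge G \leq \Comass(G)\,\Mass(T_\lambda) = \Mass(\lambda)$ since $G$ is a calibration, so equality holds throughout. Hence $T_\lambda$ is $G$-calibrated, and invoking the other direction of Lemma \ref{calibration condition} upgrades this to the statement that every leaf of $\lambda$ is $G$-calibrated.

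The only potential obstacle is ensuring that the pairings $\int_M T_\lambda \wedge F$ and $\int_M T_\lambda \wedge G$ are well-defined and continue to satisfy the calibration inequality for merely $L^\infty$ representatives; this is taken care of by the normal trace theorem (Theorem \ref{integration is welldefined}) together with the definition of mass as a supremum over comass-one forms, extended to closed $L^\infty$ forms by weakstar approximation. Beyond that bookkeeping, both parts reduce to writing down the one displayed chain of (in)equalities.
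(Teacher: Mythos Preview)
Your proposal is correct and follows essentially the same route as the paper's proof: invoke Lemma \ref{calibration condition} to pass between leafwise and current-level calibration, use the homological invariance (\ref{Ruelle Sullivan homology}) to transfer between cohomologous calibrations, and run the standard calibration inequality to get homological minimization. The paper compresses all of this into two sentences (referring only to ``a calibration argument''), whereas you have written out the chain of (in)equalities explicitly; your added remark about the well-definedness of the pairings for $L^\infty$ forms is more caution than the paper exercises, but it is not misplaced.
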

\begin{proof}
Every leaf of $\lambda$ is $F$-calibrated, hence minimal.
Since $\lambda$ is $F$-calibrated, so is $T_\lambda$ by Lemma \ref{calibration condition}, but then by (\ref{Ruelle Sullivan homology}), it follows that $T_\lambda$ is $G$-calibrated, and hence $\lambda$ is $G$-calibrated.
Moreover, since $T_\lambda$ is $F$-calibrated, a calibration argument shows that $\lambda$ is homologically minimizing.
\end{proof}

\subsection{Functions of least gradient}
The natural ``dual objects'' to calibrations are functions of least gradient, which we now define.

We begin with some topological preliminaries.
Let $M$ be a closed oriented Riemannian manifold of dimension $d$, and let $\tilde M \to M$ be the universal covering map.
Any homomorphism 
$$\alpha: \pi_1(M) \to \RR$$
induces a homomorphism $\alpha: H_1(M, \RR) \to \RR$.
Thus $\alpha$ is an element of $H^1(M, \RR)$ and by Poincar\'e duality, we view it as an element of $H_{d - 1}(M, \RR)$.
Concretely, the following are equivalent for a function $u \in BV_\loc(\tilde M, \RR)$:
\begin{enumerate}
\item $u$ is \dfn{$\alpha$-equivariant}, meaning that for every deck transformation $c \in \pi_1(M)$, and every $x \in \tilde M$,
$$u(cx) = u(x) + \alpha(c).$$
\item $\dif u$ descends to a $1$-current on $M$ whose cohomology class is the Poincar\'e dual of $\alpha$.
\end{enumerate}
In either case we write $[\dif u] = \alpha$, and write $\int_M \star |\dif u|$ or $\Mass(\dif u)$ to refer to the mass of the $1$-current that $\dif u$ induces on $M$.
If $[\dif u] = 0$ then we identify $u$ with the function that it induces on $M$.

\begin{definition}
Let $u \in BV(\tilde M, \RR)$ be a $\pi_1(M)$-equivariant function.
Suppose that, for every $v \in BV(M, \RR)$,
$$\int_M \star |\dif u| \leq \int_M \star |\dif u + \dif v|.$$
Then $u$ has \dfn{least gradient}.
\end{definition}

An $\alpha$-equivariant function $u$ has least gradient iff $\int_M \star |\dif u|$ is the stable norm, $\|\alpha\|_1$, of $\alpha$, which we defined in \S\ref{calibrated review sec}.

\begin{lemma}\label{existence for least gradient}
For each $\alpha \in H_{d - 1}(M, \RR)$, there exists an $\alpha$-equivariant function of least gradient on $\tilde M$.
\end{lemma}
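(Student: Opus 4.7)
The plan is to apply the direct method of the calculus of variations, with the only subtlety being the passage between the equivariant function $u$ on $\tilde M$ and a minimization problem on the compact base $M$.

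First I would reduce the equivariance condition to a problem on $M$. Since $\alpha \in H^1(M,\RR) \cong H_{d-1}(M,\RR)$, pick a smooth closed $1$-form $\omega_0$ representing $\PD(\alpha)$, and choose an $\alpha$-equivariant primitive $u_0$ of $\pi^* \omega_0$ on $\tilde M$, which exists since $\pi^*\omega_0$ is exact and the period of $\omega_0$ along a loop $c$ is $\alpha(c)$. Any other $\alpha$-equivariant $u \in BV_\loc(\tilde M, \RR)$ then differs from $u_0$ by a function $\tilde v$ invariant under $\pi_1(M)$, i.e.\ the pullback of some $v \in BV(M,\RR)$. Thus being a least-gradient function is equivalent to $v$ minimizing
\[
\mathcal F(v) := \int_M \star|\omega_0 + \dif v|
\]
over all $v \in BV(M,\RR)$, and we have $\inf \mathcal F = \|\alpha\|_1$.

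Next I would run the direct method. Take a minimizing sequence $(v_n) \subset BV(M,\RR)$, and normalize $\int_M v_n = 0$, which is permissible since adding a constant does not affect $\dif v_n$. Then $\Mass(\dif v_n) \leq \Mass(\omega_0) + \mathcal F(v_n) + \Mass(\omega_0)$ is bounded, and by the Poincar\'e inequality for $BV$ on the closed manifold $M$, $\|v_n\|_{L^1(M)}$ is also bounded. Hence $(v_n)$ is bounded in $BV(M,\RR)$. By the compact embedding $BV(M) \hookrightarrow L^p(M)$ for $p < d/(d-1)$ in Theorem \ref{BV Sobolev}, a subsequence converges in $L^1(M)$ and weakstar in $BV$ to some $v \in BV(M,\RR)$.

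Finally I would invoke lower semicontinuity of total variation with respect to weakstar $BV$ convergence: since $\omega_0 + \dif v_n \weakto^* \omega_0 + \dif v$ as currents of locally finite mass and $\omega_0$ is smooth,
\[
\mathcal F(v) = \int_M \star|\omega_0 + \dif v| \leq \liminf_{n \to \infty} \int_M \star|\omega_0 + \dif v_n| = \inf \mathcal F.
\]
Setting $u := u_0 + \pi^* v$ gives an $\alpha$-equivariant function of least gradient. The only point requiring care is the zero-mean normalization and the Poincar\'e inequality on a closed manifold, which together guarantee $L^1$-compactness; weakstar $BV$ lower semicontinuity of the total variation is standard and does not pose a real obstacle.
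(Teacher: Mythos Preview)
Your proof is correct and follows essentially the same approach as the paper: the direct method, compactness via the $BV$ Sobolev embedding, and lower semicontinuity of total variation. The only cosmetic difference is that you first reduce to a problem on $M$ by fixing a background closed $1$-form $\omega_0$ and minimizing over $v \in BV(M)$, whereas the paper works directly with the equivariant functions on $\tilde M$ in $BV_\loc$; your zero-mean normalization and Poincar\'e-inequality step make the $L^1$-compactness explicit, which the paper's sketch leaves implicit.
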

\begin{proof}
The argument here is a standard application of the direct method of the calculus of variations, so we just sketch the proof.
Let $(u_n)$ be a sequence of $\alpha$-equivariant functions such that
$$\lim_{n \to \infty} \int_M \star |\dif u_n| = \|\alpha\|_1.$$
This sequence is bounded in $BV_\loc(\tilde M, \RR)$, so by Alaoglu's theorem, it has a subsequence which converges in the weakstar topology of $BV_\loc$ to some function $u$ such that $\Mass(\dif u) \leq \|\alpha\|_1$.
By testing $\dif u_n$ against smooth $d - 1$-forms on $M$, we see that $[\dif u] = \alpha$ and so $u$ has least gradient.
\end{proof}

\begin{theorem}\label{least gradient iff lamination}
Assume that $d \leq 7$.
Let $u \in BV(\tilde M, \RR)$ be a $\pi_1(M)$-equivariant function which is nonconstant.
The following are equivalent:
\begin{enumerate}
\item $u$ has least gradient.
\item There is a homologically minimizing lamination $\lambda_u$ on $M$ such that:
\begin{enumerate}
\item $T_{\lambda_u} = \dif u$.
\item Every leaf of $\lambda_u$ is a minimal hypersurface.
\item Every leaf of $\lambda_u$ pulls back to a union of subsets of $\tilde M$ of the form $\partial \{u > y\}$ or $\partial \{u < y\}$ for some $y \in \RR$.
\end{enumerate}
\end{enumerate}
\end{theorem}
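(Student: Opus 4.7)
The plan is to prove the two implications separately, with the bulk of the work going into $(1) \Rightarrow (2)$. The converse $(2) \Rightarrow (1)$ is essentially tautological: given such a lamination $\lambda_u$, Lemma \ref{properties of calibrated laminations} (applied via duality between least gradient and calibrations) together with the identity $T_{\lambda_u} = \dif u$ gives $\Mass(\dif u) = \Mass(T_{\lambda_u}) = \|[\lambda_u]\|_1$. Since $[\dif u] = \PD([T_{\lambda_u}])$, adding $\dif v$ to $\dif u$ for any $v \in BV(M, \RR)$ does not change the homology class, so $\Mass(\dif u)$ is minimal among such competitors; that is, $u$ has least gradient.

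For $(1) \Rightarrow (2)$, the plan is to construct the leaves from the level sets of $u$. For each $y \in \RR$, let $E_y := \{u > y\}$ and $E_y^- := \{u < y\}$. Because $u$ has least gradient and
$$\Mass(\dif u) = \int_{-\infty}^{\infty} \Mass(\dif 1_{E_y}) \dif y$$
by the coarea formula, a standard slicing argument shows that for each $y \in \RR$, the set $\partial E_y$ is area-minimizing in the sense of geometric measure theory. Indeed, any competitor reducing area of $\partial E_y$ on a positive-measure set of $y$'s would let us build a $v$ with $\Mass(\dif(u + v)) < \Mass(\dif u)$. Since $2 \leq d \leq 7$, the Bombieri--De Giorgi--Giusti regularity theorem implies that each $\partial E_y$ (and each $\partial E_y^-$) is a smooth, embedded minimal hypersurface in $\tilde M$.

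Next I would assemble these hypersurfaces into a lamination. Let $\mathscr S$ be the set of connected components of $\partial E_y$ and $\partial E_y^-$ as $y$ ranges over $\RR$. Because $E_y \subseteq E_{y'}$ when $y > y'$, the elements of $\mathscr S$ are pairwise disjoint. The closedness of $\bigcup_{N \in \mathscr S} N$ in $\tilde M$ requires care: limits of leaves are again area-minimizing boundaries by compactness of area-minimizing currents, hence correspond to some level $y$ (using that $u$ is $L^1_\loc$). The uniform bound $\sup_{N \in \mathscr S} \|\Two_N\|_{C^0} < \infty$ follows from Schoen--Simon-type a priori curvature estimates for stable minimal hypersurfaces in dimensions $\leq 7$, combined with the local area bounds on $\partial E_y$ inherited from the bound $\Mass(\dif u) < \infty$. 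Theorem \ref{disjoint surfaces are lamination} then produces a Lipschitz lamination $\tilde \lambda_u$ on $\tilde M$; equivariance of $u$ (up to constants) under $\pi_1(M)$ makes $\tilde \lambda_u$ $\pi_1(M)$-invariant, so it descends to a lamination $\lambda_u$ on $M$.

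The final step is to equip $\lambda_u$ with a transverse measure so that $T_{\lambda_u} = \dif u$. On each flow box, transversality of leaves to a coordinate direction lets one identify the local leaf space with a subset of $\RR$ via the values of $u$, and one pushes forward Lebesgue measure accordingly; the coarea formula ensures this gives a well-defined, globally consistent transverse measure, and unpacking the definition of the Ruelle--Sullivan current via (\ref{coarea formula on laminations}) yields $T_{\lambda_u} = \dif u$. Homological minimality of $\lambda_u$ then follows from the least gradient property of $u$ via $\Mass(\lambda_u) = \Mass(\dif u) = \|\alpha\|_1$. The main obstacle I anticipate is the curvature bound needed to invoke Theorem \ref{disjoint surfaces are lamination} and the careful handling of level sets with nonempty interior (where $\partial E_y \neq \partial E_y^-$), which is exactly why both types of boundaries appear in the statement.
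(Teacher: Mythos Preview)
Your outline is essentially correct, but it takes a different route from the paper. The paper's proof is a two-line citation: both implications are delegated to \cite[Theorem B]{BackusCML}, which already packages the coarea slicing, the Bombieri--De Giorgi--Giusti regularity of level sets, the maximum-principle disjointness, the curvature estimates, and the construction of the transverse measure into a single statement that a $BV$ function locally has least gradient iff its level sets form a measured lamination by minimal hypersurfaces with Ruelle--Sullivan current $\dif u$. The paper then only adds the observation that equivariance lets $\tilde\lambda_u$ descend to $M$, and that homological minimality is equivalent to $\Mass(\dif u)=\|[\dif u]\|_1$. What you have written is, in effect, a sketch of the proof of that cited theorem, which is perfectly reasonable but more laborious.

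Two minor corrections to your sketch. For $(2)\Rightarrow(1)$, the reference to Lemma~\ref{properties of calibrated laminations} is misplaced: that lemma is about \emph{calibrated} laminations, whereas hypothesis (2) already gives you that $\lambda_u$ is homologically minimizing, so $\Mass(\dif u)=\Mass(\lambda_u)=\|[\lambda_u]\|_1$ directly, and least gradient follows from the remark preceding Lemma~\ref{existence for least gradient}. For $(1)\Rightarrow(2)$, the disjointness of the components of $\partial E_y$ and $\partial E_{y'}$ does not follow from the nesting $E_y\subseteq E_{y'}$ alone; one must invoke the strong maximum principle for minimal hypersurfaces to rule out tangential contact, which is precisely the mechanism behind Lemma~\ref{calibrated implies disjoint} in the paper and is also what handles the $\partial E_y\neq\partial E_y^-$ issue you flag at the end.
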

\begin{proof}
If $u$ has least gradient, then \cite[Theorem B]{BackusCML} implies that there is a measured oriented lamination $\tilde \lambda_u$ of minimal hypersurfaces on $\tilde M$ whose leaves are level sets of $u$, and whose Ruelle-Sullivan current is $\dif u$.
Since $u$ is equivariant, $\tilde \lambda_u$ descends to a lamination $\lambda_u$ on $M$ such that $\Mass(\lambda_u) = \Mass(\dif u)$.
Since $u$ has least gradient, $\lambda_u$ is homologically minimizing.

Conversely, if such a lamination exists, \cite[Theorem B]{BackusCML} implies that $u$ locally has least gradient and $\Mass(\dif u) = \Mass(\lambda_u)$, so $u$ has least gradient.
\end{proof}

Combining the above two results, we see that if $d \leq 7$, every nonzero class in $H_{d - 1}(M, \RR)$ contains a homologically minimizing lamination.

\subsection{Duality of calibrations and laminations}
Recall from \S\ref{calibrated review sec} the definition of the costable norm, $\|\cdot\|_\infty$.
If $F$ is a calibration in a cohomology class $\rho$, then either $\|\rho\|_\infty = 1$ (because $F$ minimizes its $L^\infty$ norm in $\rho$, and $\|F\|_{L^\infty} = 1$), or $F$ calibrates no currents whatsoever.
Conversely, if $\|\rho\|_\infty = 1$, then by Alaoglu's theorem, there is a calibration in $\rho$.

It is natural to ask if there is a \emph{continuous} calibration in $\rho$, as was assumed in \cite{bangert_cui_2017,Freedman_2016}.
If $d = 2$ one might try to generalize the argument of \cite{Evans08} to obtain a H\"older continuous calibration, but if $d \geq 8$ then continuous calibrations need not exist \cite{liu2023homologically}.
The situation that $3 \leq d \leq 7$ remains unclear.
If $\Ric_M \geq 0$, then the Bochner argument shows that the harmonic representative of $\rho$ is a calibration; however, the Bochner argument actually shows that $M = \Sph^1 \times N$ where $N$ is the calibrated hypersurface, so this is not very interesting.

In the setting of the Dirichlet problem for a domain on euclidean space, Maz\'on, Rossi, and Segura de Le\'on \cite{Mazon14} proved that a $BV$ function has least gradient iff it is calibrated by some calibration.
In fact, the same duality holds here, but in the equivariant setting the proof is trivial.

\begin{lemma}\label{Mazon Rossi Segura}
Let $u \in BV_\loc(\tilde M, \RR)$ be an equivariant function.
The following are equivalent: 
\begin{enumerate}
\item $u$ has least gradient.
\item There exists a calibration $F$ on $M$ such that $\dif u$ is $F$-calibrated.
\end{enumerate}
\end{lemma}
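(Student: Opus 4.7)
The plan is to prove both directions using Anzellotti's wedge product together with the duality between the stable and costable norms.

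For the direction that $F$-calibration of $\dif u$ implies that $u$ has least gradient, I first observe that for any $v \in BV(M, \RR)$, testing the definition of Anzellotti's wedge product against the constant function $\chi = 1 \in C^\infty(M)$ and using $\dif F = 0$ gives $\int_M \dif v \wedge F = 0$. Combined with Anzellotti's H\"older inequality \eqref{Anzellotti Holder inequality} applied with $\Comass(F) = 1$, this yields
$$\int_M \star|\dif u| = \int_M \dif u \wedge F = \int_M (\dif u + \dif v) \wedge F \leq \int_M \star|\dif u + \dif v|,$$
so $u$ has least gradient.

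For the converse, suppose $u$ has least gradient, and let $\alpha \in H_{d-1}(M, \RR)$ be the Poincar\'e dual of $[\dif u]$, so that $\Mass(\dif u) = \|\alpha\|_1$. Because the stable and costable norms are dual norms on the finite-dimensional spaces $H_{d-1}(M, \RR)$ and $H^{d-1}(M, \RR)$, the supremum
$$\|\alpha\|_1 = \sup\{\langle \rho', \alpha\rangle : \rho' \in H^{d-1}(M, \RR),\ \|\rho'\|_\infty \leq 1\}$$
is attained at some $\rho'$, and by the theorem of Section \ref{calibrated review sec} asserting that the infimum defining the costable norm is attained, there is a calibration $F \in L^\infty(M, \Omega^{d-1})$ with $[F] = \rho'$. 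Writing $\dif u = \omega + \dif w$ with $\omega$ a smooth closed representative of $[\dif u]$ and $w \in BV(M, \RR)$ (by de Rham), the identity $\int_M \dif w \wedge F = 0$ from the previous paragraph gives
$$\int_M \dif u \wedge F = \int_M \omega \wedge F = \langle [\dif u] \cup \rho', [M]\rangle = \langle \rho', \alpha\rangle = \|\alpha\|_1 = \Mass(\dif u),$$
so $F$ calibrates $\dif u$.

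The only subtle point — and presumably the reason the author calls this proof \emph{trivial} in the equivariant setting — is the identification of Anzellotti's pairing with the cup product pairing on cohomology; this could in principle require an approximation argument via the div-curl lemma (Lemma \ref{Anzellotti compensated compactness}), but the decomposition $\dif u = \omega + \dif w$ sidesteps any such step.
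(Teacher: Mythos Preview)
Your proof is correct and follows essentially the same approach as the paper's. The only difference is that you spell out the identity $\int_M \dif u \wedge F = \langle \rho, \alpha\rangle$ via the decomposition $\dif u = \omega + \dif w$ with $\omega$ smooth and $w \in BV(M)$, whereas the paper asserts this identity without comment; your extra sentence is a legitimate justification of a step the paper takes for granted.
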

\begin{proof}
If $\dif u$ is $F$-calibrated, then we have by Stokes' theorem and (\ref{Anzellotti Holder inequality}) that for any $v \in BV(M, \RR)$,
$$\int_M \star |\dif u| = \int_M \dif u \wedge F = \int_M (\dif u + \dif v) \wedge F \leq \int_M \star |\dif u + \dif v|,$$
so $u$ has least gradient.

Conversely, if $u$ has least gradient, then let $\alpha := [\dif u]$ and choose $\rho \in H^{d - 1}(M, \RR)$ such that $\langle \rho, \alpha\rangle = \|\alpha\|_1$ and $\|\rho\|_\infty = 1$.
In particular, there exists a calibration $F$ such that $[F] = \rho$, and 
$$\int_M \dif u \wedge F = \langle \rho, \alpha\rangle = \|\alpha\|_1 = \int_M \star |\dif u|,$$
so that $u$ has least gradient.
\end{proof}

The above proof motivates the introduction of the following terminology from convex geometry.
A \dfn{flat} in the stable unit sphere $\partial B$ is the intersection of $\partial B$ with a hyperplane.
In particular, every flat is convex.
If $\|\rho\|_\infty = 1$, its \dfn{dual flat} is
$$\rho^* := \{\alpha \in \partial B: \langle \rho, \alpha\rangle = 1\}.$$
This set is convex, compact, and nonempty; in general it does not have to be a singleton.
Every hyperplane in $H_{d - 1}(M, \RR)$ takes the form $\{\alpha \in H_{d - 1}(M, \RR): \langle \rho, \alpha\rangle = t\}$ for some $\rho$ in the costable unit sphere and some $t \in \RR$, so every flat in $\partial B$ is contained in $\rho^*$ for some $\rho \in \partial B^*$.

The next lemma was observed by Bangert and Cui, \cite{bangert_cui_2017}, in the setting that $F$ is continuous and we require no regularity on the laminations involved.

\begin{lemma}\label{calibrated means measured stretch}
Suppose that $M$ is a closed Riemannian manifold of dimension $d \leq 7$.
Let $\rho \in H^{d - 1}(M, \RR)$ satisfy $\|\rho\|_\infty = 1$, and let $F$ be a calibration in $\rho$.
Then there exists an $F$-calibrated measured oriented lamination.
\end{lemma}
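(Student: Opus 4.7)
The plan is to produce the desired measured oriented lamination as the lamination $\lambda_u$ associated by Theorem \ref{least gradient iff lamination} to a well-chosen function of least gradient, then use the duality developed in Lemma \ref{Mazon Rossi Segura} together with Lemma \ref{calibration condition} to upgrade this to an $F$-calibration.

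\textbf{Step 1: produce a dual homology class.} As noted right after the definition of $\rho^*$, the set $\rho^* = \{\alpha \in \partial B : \langle \rho, \alpha\rangle = 1\}$ is nonempty whenever $\|\rho\|_\infty = 1$. Pick any $\alpha \in \rho^*$; in particular $\alpha \neq 0$, $\|\alpha\|_1 = 1$, and $\langle \rho, \alpha\rangle = 1$.

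\textbf{Step 2: extract a function of least gradient and its lamination.} By Lemma \ref{existence for least gradient}, there exists an $\alpha$-equivariant function $u \in BV_\loc(\tilde M, \RR)$ of least gradient; since $\alpha \neq 0$, $u$ is nonconstant. The hypothesis $d \leq 7$ lets us apply Theorem \ref{least gradient iff lamination}, which yields a homologically minimizing measured oriented lamination $\lambda_u$ on $M$ with Ruelle--Sullivan current $T_{\lambda_u} = \dif u$ and $\Mass(\lambda_u) = \Mass(\dif u) = \|\alpha\|_1 = 1$.

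\textbf{Step 3: verify the calibration identity.} By Anzellotti's theorem, Theorem \ref{Anzellotti wedge product exists}, the pairing $\int_M \dif u \wedge F$ is well-defined, and since $F$ is closed and $\dif u$ is a closed $1$-current representing the Poincar\'e dual of $\alpha$,
\[
\int_M \dif u \wedge F = \langle [F], \PD^{-1}([\dif u])\rangle = \langle \rho, \alpha\rangle = 1 = \Mass(\dif u).
\]
Thus $T_{\lambda_u} = \dif u$ is $F$-calibrated in the sense of \S\ref{calibrated review sec}. Applying Lemma \ref{calibration condition} promotes this to the statement that the lamination $\lambda_u$ itself is $F$-calibrated, completing the proof.

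The only step with any real content is the identification $\int_M \dif u \wedge F = \langle \rho, \alpha\rangle$: one has to know that Anzellotti's wedge product of a $BV$ function with a closed $L^\infty$ form computes the de Rham cohomological pairing. This is not an obstacle here because it was already built into the proof of Lemma \ref{Mazon Rossi Segura}; indeed, the present lemma is essentially the ``realization'' half of Lemma \ref{Mazon Rossi Segura} combined with the lamination decomposition of Theorem \ref{least gradient iff lamination}, and requires no new analytic ingredients.
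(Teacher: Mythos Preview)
Your proof is correct and follows essentially the same route as the paper: choose $\alpha \in \rho^*$, take an $\alpha$-equivariant function of least gradient, observe that $\dif u$ is $F$-calibrated via the pairing $\int_M \dif u \wedge F = \langle \rho, \alpha\rangle = \|\alpha\|_1$, and then invoke Theorem \ref{least gradient iff lamination} and Lemma \ref{calibration condition}. The paper's proof is a terse two-line version of exactly this argument; your version simply makes the intermediate steps explicit.
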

\begin{proof}
Choose $\alpha \in \rho^*$, and let $u$ be an $\alpha$-equivariant function of least gradient.
Then $\dif u$ is $F$-calibrated, so the measured oriented lamination $\kappa$ given by Theorem \ref{least gradient iff lamination} is $F$-calibrated by Lemma \ref{calibration condition}.
\end{proof}

In view of the Maz\'on--Rossi--Segura de Le\'on theorem and Lemma \ref{calibrated means measured stretch}, it is natural to conjecture that \emph{if $F$ minimizes its $L^\infty$ norm subject to a boundary condition on a domain $U$ in euclidean space and $\|F\|_{L^\infty} = 1$, then $F$ calibrates some function on $U$}.
The following example shows that this conjecture is false.

\begin{example}\label{boundaries bad}
Let
$$v(x + iy) := \arctan\left(\frac{y}{x}\right)$$
defined on the open disk $U \subset \CC$ bounded by the circle $(x - 2)^2 + y^2 = 1$.
Working in the polar coordinates $x + iy = re^{i\theta}$, one has $\dif v = \dif \theta$, so $|\dif v| = r^{-1}$, which attains its maximum only at the boundary point $x + iy = 1$, where it is $1$.
Therefore $\dif v$ is a calibration.

Let $\iota: \partial U \to \overline U$ be the inclusion map.
If $F$ is another calibration which satisfies the boundary condition $\iota^* F = \iota^* \dif v$, then since $\dif F = 0$, we can write $F = \dif w$.
Then the trace of $w - v$ along $\partial U$ is a constant, which we may assume to be $0$.
But $v$ is \dfn{$\infty$-harmonic}, meaning that
$$\langle \nabla^2 v, \nabla v \otimes \nabla v\rangle = 0.$$
Indeed, in polar coordinates, the euclidean metric becomes
$$g = \dif r^2 + r^2 \dif \theta^2,$$
so the Christoffel symbol ${\Gamma^\theta}_{\theta \theta}$ vanishes.
Then we compute 
$$\langle \nabla \dif \theta, \dif \theta \otimes \dif \theta\rangle = \langle \nabla \dif \theta, \partial_\theta \otimes \partial_\theta \rangle r^{-4} = r^{-4} {\Gamma^\theta}_{\theta \theta} = 0.$$
Since $v$ is $\infty$-harmonic, $\|F\|_{L^\infty} \geq \|\dif v\|_{L^\infty}$ \cite{Crandall2008}, so that the ``costable norm'' of the boundary data $\iota^* \dif v$ is $1$.

But if $u$ is a function on $U$ such that $\dif u$ is $\dif v$-calibrated, then
$$\supp \dif u \cap U \subseteq \{|\dif v| = 1\} \cap U = \emptyset,$$ 
so $u$ is constant!
A more geometric way to visualize this phenomenon is to notice that the streamlines of $v$ -- that is, the integral curves of the gradient of $v$ -- are the circles centered on $0$.
If $u$ was $\dif v$-calibrated and $\gamma$ was a level curve of $u$, then $\gamma$ would be a streamline of $v$, but would also be $\dif v$-calibrated.
We would conclude that circles are geodesics, which is a contradiction.
\end{example}

\section{Construction of the canonical lamination}
\label{canonical sec}
Throughout this section, we fix a closed oriented Riemannian manifold $M$ of dimension $2 \leq d \leq 7$, and a cohomology class $\rho \in H^{d - 1}(M, \RR)$ in the costable unit sphere: $\|\rho\|_\infty = 1$.
We prove Theorem \ref{existence of calibrated lam}: \emph{the set of complete immersed hypersurfaces, which are calibrated by every calibration in $\rho$, is the set of leaves of a lamination with Lipschitz regularity}.

Let $F$ be a calibration in $\rho$.
The set $S := \{|F| = 1\}$ need not be the support of a lamination $\lambda$; and even if it was, we would not be able to conclude that $F$ calibrates $\lambda$.
For example, if $d \geq 3$, then one can exploit the possible nonintegrability of $\star F$ to produce counterexamples \cite[\S4]{bangert_cui_2017}.
More starkly, if $d = 2$, then the main theorem of \cite{daskalopoulos2020transverse} then implies that $S$ contains a geodesic lamination $\lambda$; on the other hand, the main theorem of \cite{backus2024lipschitz} implies that any closed set containing $\supp \lambda$ can be realized as the set $\{|G| = 1\}$ for some calibration $G$ in $\rho$.
If $M$ is hyperbolic, then $\lambda$ has Hausdorff dimension $1$ \cite{Birman1985}, so ``almost every'' closed subset of $M$ is $\{|G| = 1\}$ for some $G$.

We shall construct a lamination $\lambda_F$ whose support is contained in $S$, such that every $F$-calibrated hypersurface is a leaf of $\lambda_F$.
By Theorem \ref{disjoint surfaces are lamination}, we must establish the following:
\begin{enumerate}
\item There is an $F$-calibrated hypersurface. \label{plan existence}
\item There is a uniform bound on the curvatures of the $F$-calibrated hypersurfaces. \label{plan curvature}
\item Any two $F$-calibrated hypersurfaces are disjoint. \label{plan disjoint}
\item The limit of a sequence of $F$-calibrated hypersurfaces is a $F$-calibrated hypersurface.
\end{enumerate}
The nontriviality condition (\ref{plan existence}) is nothing more than Lemma \ref{calibrated means measured stretch}.
The hypersurface furnished by that lemma is actually calibrated by \emph{every} calibration in $\rho$, so the intersection of all laminations $\lambda_F$ is nonempty; this intersection shall be the canonical calibrated lamination.

We now show that the leaves of the putative canonical lamination satisfy the necessary curvature bounds; this is a little subtle because the leaves are injectively immersed but not embedded.
In the below lemmata, let $r_*$ be the minimum of the injectivity radius of $M$ and $\delta \|\Riem_M\|_{C^0}^{-1/2}$, where $\delta > 0$ is a dimensional constant to be determined later.
Let $\Sph^{d - 1}$ be the round sphere of dimension $d - 1$.

\begin{lemma}\label{transversality}
Let
$$U := \bigcup_{x \in M} \{\xi \in T_x M: 0 < |\xi| < r_*\},$$
and let $F: U \to M$ be the exponential map.
Then for every injectively immersed hypersurface $N \subset M$, $F$ is transverse to $N$.
\end{lemma}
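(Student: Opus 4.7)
The plan is to show that $F$ is a submersion on $U$, from which transversality to any immersed hypersurface is automatic. Recall that $F$ is transverse to $N$ at a point $\xi \in F^{-1}(N)$ precisely when
\[
dF_\xi(T_\xi U) + T_{F(\xi)} N = T_{F(\xi)} M,
\]
which certainly holds if $dF_\xi$ is surjective. The injective-immersion hypothesis on $N$ is used only to ensure that $T_{F(\xi)} N$ is a well-defined subspace of $T_{F(\xi)} M$ at each point $F(\xi) \in N$, so that the transversality condition is unambiguous.

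To verify the surjectivity of $dF_\xi$, fix $\xi \in U$ and set $x := \pi(\xi)$, where $\pi : TM \to M$ is the bundle projection. The fiber $\pi^{-1}(x) \cap U$ is a $d$-dimensional submanifold of $U$ whose tangent space at $\xi$ is the \emph{vertical subspace} $V_\xi \subseteq T_\xi U$, canonically identified with $T_x M$ itself. The restriction of $F$ to this fiber is nothing but $\exp_x$ (restricted to a punctured ball in $T_x M$), so $dF_\xi|_{V_\xi}$ coincides with $d(\exp_x)_\xi : T_x M \to T_{F(\xi)} M$.

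The key observation is that $d(\exp_x)_\xi$ is a linear isomorphism. Indeed, since $r_*$ is bounded above by the injectivity radius at $x$, the map $\exp_x$ restricts to a diffeomorphism from $\{\eta \in T_x M : |\eta| < r_*\}$ onto its image in $M$, so in particular its differential at $\xi$ is invertible. Consequently $dF_\xi$ is already surjective on $V_\xi$ alone, so $F$ is a submersion on all of $U$, and the transversality statement follows immediately. There is essentially no obstacle here: the argument uses only the injectivity-radius portion of the bound implicit in the definition of $r_*$, and the curvature bound $r_* \leq \delta \|\Riem_M\|_{C^0}^{-1/2}$ will only come into play in the subsequent Jacobi-field estimates used to upgrade this qualitative transversality to the uniform quantitative estimates needed for the laminar-flow-box construction.
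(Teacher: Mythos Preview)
Your proof is correct, and it is actually more direct than the paper's own argument. You observe that the restriction of $F$ to each fibre $T_xM \cap U$ is $\exp_x$ on a punctured ball of radius $r_* \leq \mathrm{inj}(x)$, hence a diffeomorphism onto its image; it follows that $dF_\xi$ is already surjective on the vertical subspace, so $F$ is a submersion and transversality to any submanifold is automatic. This uses only the injectivity-radius part of the definition of $r_*$.

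The paper instead produces an explicit vector in the image of $dF_\xi$ that is not tangent to $N$: it parallel-transports the unit normal $\eta$ of $N$ back along the geodesic to obtain $\overline\eta \in T_xM \cong V_\xi$, and then invokes the curvature bound $r_* \leq \delta\|\Riem_M\|_{C^0}^{-1/2}$ to argue via Jacobi-field comparison that $dF_\xi\overline\eta$ stays close to $\eta$ and hence remains transverse to $N$. This is more work than is needed for the bare transversality statement; your remark that the curvature bound is only genuinely required later (for the uniform geometric estimates feeding into the flow-box construction) is well taken. The paper's route has the minor advantage of identifying a \emph{specific} transverse direction with a quantitative angle bound, but for the lemma as stated your submersion argument is both sufficient and cleaner.
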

\begin{proof}
We must show that for every $(x, \xi) \in U$ such that $F(x, \xi) \in N$, the image of
$$\dif F(x, \xi): T_{(x, \xi)} T_x M \to T_{F(x, \xi)} M$$
contains a vector not tangent to $N$.
Let $\eta$ be the unit normal to $N$ at $F(x, \xi)$, and let $\overline \eta$ be the parallel transport of $\eta$ along the unique geodesic $\gamma$ from $F(x, \xi)$ to $x$.
Viewing $\overline \eta$ as an element of $T_{F(x, \xi)} T_x M$, we see that if $\delta$ was chosen small enough, then $\dif F(x, \xi) \overline \eta$ lies in a small neighborhood of $\eta$.
Indeed, if $\delta$ was chosen small enough, then $\gamma$ is much shorter than the curvature scale $\|\Riem_M\|_{C^0}^{-1/2}$.
In particular, $\dif F(x, \xi) \overline \eta$ is not tangent to $N$.
\end{proof}

\begin{lemma}
For every calibration $F$, every complete injectively immersed $F$-calibrated hypersurface $N \subset M$, every $x \in M$, every $0 < r \leq r_*$, and every component $N'$ of $N \cap B(x, r)$,
\begin{equation}\label{area bound for calibrated}
\vol(\Ball^{d - 1}) r^{d - 1} \leq \vol(N') \leq 2\vol(\Sph^{d - 1}) r^{d - 1}.
\end{equation}
\end{lemma}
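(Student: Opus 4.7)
The plan is to exploit that $N'$ is a minimal hypersurface sitting in the near-Euclidean ball $B(x, r)$; the comparison with Euclidean space is ensured by $r \leq r_*$, since $r_*$ was chosen smaller than both the injectivity radius and an absolute curvature scale.

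For the upper bound I would combine Stokes' theorem with the calibration property. By the $L^\infty$ Poincar\'e lemma (Theorem \ref{Hodge theorem}), $F = \dif A$ for a continuous primitive $A$ on $B(x, r)$, and by Lemma \ref{transversality} the leaf $N$ is transverse to $\partial B(x, r)$, so $\partial N' := \overline{N'} \cap \partial B(x, r)$ is a smooth compact $(d-2)$-submanifold of $\partial B(x, r) \cong \Sph^{d-1}$. Since $\Sph^{d-1}$ is simply connected, I can pick an oriented $(d-1)$-chain $C \subset \partial B(x, r)$ with $\partial C = \partial N'$ and $\vol(C) \leq \vol(\partial B(x, r))$. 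Stokes' theorem together with $F = \dif A$ then gives $\int_{N'} F = \int_{\partial N'} A = \int_C F$; combining this with the calibration identity $\int_{N'} F = \vol(N')$ and the comass bound $\int_C F \leq \vol(C)$ yields
$$\vol(N') \leq \vol(\partial B(x, r)) \leq 2 \vol(\Sph^{d - 1}) r^{d - 1},$$
the last inequality being the Euclidean area comparison for geodesic spheres of radius $r \leq r_*$.

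For the lower bound I would invoke the standard monotonicity formula for minimal hypersurfaces in a near-Euclidean ball. Since $N'$ is $F$-calibrated it is minimal, and for any $y \in N'$ and any $s$ with $B(y, s) \subset B(x, r)$, the connected component of $N \cap B(y, s)$ through $y$ lies inside $N'$ and has $(d - 1)$-volume at least $(1 - O(s^2)) \vol(\Ball^{d - 1}) s^{d - 1}$. To extract the stated bound I would select $y \in \overline{N'}$ minimizing $\rho := \dist(\cdot, x)$ and take $s := r - \rho(y)$, so that $B(y, s) \subset B(x, r)$.

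The main obstacle is showing that $s \gtrsim r$, i.e., that $N'$ is not squeezed into an arbitrarily thin annular shell near $\partial B(x, r)$. This would come from a maximum principle argument: $\rho^2|_{N'}$ is subharmonic up to an $r^2$-small curvature error on the minimal hypersurface $N'$, so its maximum $r^2$ is attained on $\partial N'$ while its interior minimum $y$ satisfies $\rho(y)$ bounded away from $r$ by a definite factor. Applying the monotonicity inequality at $y$ at the resulting scale $s \gtrsim r$ then produces the claimed lower bound.
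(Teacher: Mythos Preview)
Your upper-bound argument is essentially the paper's: use the $L^\infty$ Poincar\'e lemma to write $F=\dif A$, push $\int_{N'}F$ to the boundary via Stokes, and cap off inside $\partial B(x,r)$ using $H_{d-2}(\Sph^{d-1},\RR)=0$. One correction: Lemma~\ref{transversality} does \emph{not} say that $N$ is transverse to the fixed sphere $\partial B(x,r)$; it says the full exponential map $U\to M$ is transverse to $N$. The paper then applies the Thom transversality theorem to conclude that for \emph{almost every} pair $(x,r)$ the sphere $\partial B(x,r)$ meets $N$ transversely, and observes that the desired estimate is stable under small perturbations of $r$, so one may reduce to such a generic radius. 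Without that step your $\partial N'$ need not be a smooth $(d-2)$-manifold.

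The lower bound is a different matter. First, the paper's own proof establishes only the upper inequality; the lower one is never addressed (and only the upper bound feeds into the Schoen--Simon curvature estimate in the next lemma). Second, the inequality $\vol(\Ball^{d-1})\leq\vol(N')$ is in fact false for an arbitrary component $N'$: already in a flat torus, take $N$ a totally geodesic hyperplane calibrated by a constant form and center $x$ at distance $r-\varepsilon$ from $N$; then $N\cap B(x,r)$ is a single disk of radius $\sim(r\varepsilon)^{1/2}$, whose volume tends to $0$ as $\varepsilon\to 0$. Your proposed mechanism---that subharmonicity of $\rho^2|_{N'}$ forces the interior minimum of $\rho$ to be bounded away from $r$---goes the wrong way: subharmonicity controls the \emph{maximum} of $\rho^2$ (it is attained on $\partial N'$), but says nothing about the minimum, and the flat example shows no such bound exists. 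So this half of the argument should simply be dropped; the stated lower bound appears to be a slip in the lemma's statement rather than something you are expected to prove.
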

\begin{proof}
If $\delta$ was chosen small enough, then
$$\vol(\partial B(x, r)) < 2\vol(\Sph^{d - 1}) r^{d - 1}.$$
Let $F: U \to M$ be the exponential map as in Lemma \ref{transversality}, so that $F$ is transverse to $N$.
By putting polar coordinates on each tangent space, we may view $U$ as a fiber bundle,
$$M \times (0, r_*) \to U \to \Sph^{d - 1}.$$
By the Thom transversality theorem, for almost every $(x, r) \in M \times (0, r_*)$, the induced map
\begin{align*}
f_{x, r}: \Sph^{d - 1} &\to M \\
\omega &\mapsto F(x, r\omega)
\end{align*}
is transverse to $N$.
But $f_{x, r}$ is the embedding $\Sph^{d - 1} \to \partial B(x, r)$.
The estimate (\ref{area bound for calibrated}) is preserved by slight perturbations of $r$, so we may use the above considerations to reduce to the case that $N$ is transverse to $\partial B(x, r)$.

Let $N'$ be a component of $N \cap B(x, r)$, so that $N'$ is embedded (not just injectively immersed).
By transversality, $N' \cap \partial B(x, r)$ is diffeomorphic to a closed $d - 2$-dimensional submanifold of $\Sph^{d - 1}$.
Since $H_{d - 2}(\Sph^{d - 1}, \RR) = 0$, there exists a relatively open set $V \subseteq \partial B(x, r)$ which is bounded by $N \cap \partial B(x, r)$.
Because of how we chose $r_*$, we may use the $L^\infty$ Poincar\'e lemma, Theorem \ref{Hodge theorem}, to find a continuous $d - 2$-form $A$ on a neighborhood of the closure of $B(x, r)$, such that $F = \dif A$.
Then
\begin{align*}
\vol(N \cap B(x, r)) &= \int_{N \cap B(x, r)} F = \int_{N \cap \partial B(x, r)} A = \int_V F \leq \vol(V) \leq \vol(\partial B(x, r)) \\
&< 2\vol(\Sph^{d - 1}) r^{d - 1}. \qedhere
\end{align*}
\end{proof}

\begin{lemma}
There exists a constant $C > 0$, only depending on $M$, such that for every calibration $F$ and complete injectively immersed $F$-calibrated hypersurface $N$, we have the curvature bound
\begin{equation}\label{curvature bound for calibrated}
\|\Two_N\|_{C^0} \leq C.
\end{equation}
\end{lemma}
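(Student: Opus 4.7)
The plan is to run a blow-up argument and exploit the Bernstein rigidity of area-minimizing hypersurfaces in low codimension, which is available precisely in the range $d - 1 \leq 6$ corresponding to $d \leq 7$. The key observation is that every $F$-calibrated hypersurface $N$ is locally area-minimizing: for any sufficiently small ball $B(x, r) \subset M$, the $L^\infty$ Poincar\'e lemma (Theorem \ref{Hodge theorem}) lets us write $F = \dif A$ on $B(x, r)$, and then the usual calibration inequality, applied to compactly supported perturbations of $N \cap B(x, r)$, shows that $N \cap B(x, r)$ minimizes area among such perturbations.

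Suppose the claim fails. Then there exist sequences of calibrations $F_n$, complete injectively immersed $F_n$-calibrated hypersurfaces $N_n$, and points $x_n \in N_n$ with $\mu_n := |\Two_{N_n}(x_n)| \to \infty$. Passing to a subsequence, $x_n \to x_\infty$. Fix normal coordinates around $x_\infty$ and rescale the metric by $\mu_n$ to produce hypersurfaces $\tilde N_n := \mu_n(N_n - x_n)$ sitting inside $(B(0, \mu_n r_*/2), \mu_n^2 g)$. As $n \to \infty$, the rescaled metrics converge in $C^\infty_\loc$ to the euclidean metric on $\RR^d$. By Lemma \ref{transversality} and the area estimate (\ref{area bound for calibrated}), the connected component $\tilde N_n'$ of $\tilde N_n$ through the origin satisfies
$\vol(\tilde N_n' \cap B_R(0)) \leq 2 \vol(\Sph^{d-1}) R^{d-1}$
for every $R \leq \mu_n r_*/2$, so the family $\tilde N_n'$ has uniform polynomial area growth.

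Since each $\tilde N_n'$ is locally area-minimizing, the Federer-Fleming compactness theorem for integral currents (equivalently, Allard compactness for rectifiable varifolds) extracts a subsequential limit $N_\infty$: a complete, locally area-minimizing integral hypersurface in $\RR^d$ with $\vol(N_\infty \cap B_R(0)) \leq 2\vol(\Sph^{d-1}) R^{d-1}$ for all $R > 0$. Because $\dim N_\infty = d - 1 \leq 6$, the Bernstein theorem for area-minimizing hypersurfaces (Fleming, De Giorgi, Almgren, Simons) forces $N_\infty$ to be a hyperplane. Allard's regularity theorem upgrades the flat convergence to $C^{1,\alpha}_\loc$ convergence near $0$, and then elliptic regularity for the minimal surface equation upgrades this further to $C^2_\loc$ convergence. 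Therefore $|\Two_{N_\infty}(0)| = \lim_{n \to \infty} |\Two_{\tilde N_n'}(0)| = 1$, contradicting the fact that a hyperplane has $\Two \equiv 0$.

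The main technical hurdle is making the blow-up rigorous in the face of the fact that $N_n$ is merely injectively immersed: one must work with the embedded component $\tilde N_n'$ through the origin, as was already done in the preceding area estimate, and verify that the uniform polynomial area bound on components is preserved under rescaling. Once these components are replaced by integral currents (endowed with the orientations inherited from $N_n$), the compactness, Bernstein, and Allard regularity steps are all standard; the dimension hypothesis $d \leq 7$ is used exactly once, in Bernstein's theorem.
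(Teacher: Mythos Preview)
Your approach is genuinely different from the paper's. The paper does not run a blow-up argument at all: it simply observes that each component $N'$ of $N \cap B(x,r)$ is a stable minimal hypersurface satisfying the area bound $\vol(N') \lesssim r^{d-1}$ from (\ref{area bound for calibrated}), and then quotes the Schoen--Simon curvature estimate for stable minimal hypersurfaces with area bounds \cite[pg785, Corollary 1]{Schoen81} to conclude $\|\Two_{N'}\|_{C^0(B(x,r/2))} \lesssim 1/r$. A finite cover of $M$ finishes the proof. Your contradiction/blow-up route is the other standard way to obtain such estimates, and it has the virtue of making the role of the dimension restriction $d \leq 7$ completely transparent via Bernstein's theorem.

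That said, there is a gap in your argument at the Allard step. To upgrade flat (or varifold) convergence $\tilde N_n' \to N_\infty$ to $C^{1,\alpha}$ graphical convergence near the origin, Allard's theorem requires the limit to have density exactly $1$ at $0$. You have not established this: the upper area bound $\vol(\tilde N_n' \cap B_R) \leq 2\vol(\Sph^{d-1}) R^{d-1}$ from (\ref{area bound for calibrated}) only bounds the density by a large constant, so $N_\infty$ could a priori be a hyperplane with multiplicity $k > 1$, in which case Allard does not directly apply and the passage $|\Two_{\tilde N_n'}(0)| \to |\Two_{N_\infty}(0)|$ is unjustified. Connectedness of $\tilde N_n'$ does not rule this out, since the sheets could join outside any fixed compact set. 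The standard remedy is a point-selection argument: rather than blowing up at an arbitrary point where $|\Two_{N_n}|$ is large, choose $x_n$ to (nearly) maximize $|\Two_{N_n}(x)| \cdot \dist(x, \partial B)$ on a suitable ball, so that after rescaling the second fundamental form is uniformly bounded on compact sets. One then extracts a \emph{smooth} subsequential limit directly (no Allard needed), which is a complete minimal hypersurface in $\RR^d$ with $|\Two(0)| = 1$, and Bernstein yields the contradiction. With that fix your proof goes through; without it, the argument is incomplete.
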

\begin{proof}
Let $x \in N$ and let $r > 0$ be small enough depending on $M$.
Then each component $N'$ of $N \cap B(x, r)$ is $F$-calibrated, and therefore a stable minimal hypersurface.
By (\ref{area bound for calibrated}), $\vol(N') \lesssim r^{d - 1}$.
So by \cite[pg785, Corollary 1]{Schoen81} (see also \cite[Chapter 2, \S\S4-5]{colding2011course}),
\begin{align*}
\|\Two_{N'}\|_{C^0(B(x, r/2))} \lesssim_{d, \|\Riem_g\|_{C^0(B(x, 2r))}} \frac{1}{r}.
\end{align*}
Since $N'$ was an arbitrary component, the same estimate holds for $N$.
Using the compactness of $M$, we may cover it by finitely many balls in which estimates of this form hold to conclude (\ref{curvature bound for calibrated}).
\end{proof}

\begin{lemma}\label{calibrated implies disjoint}
Let $F$ be a calibration, and let $N, N'$ be immersed $F$-calibrated hypersurfaces. Then:
\begin{enumerate}
\item If $N \cap N'$ is nonempty, then for each $x \in N \cap N'$ there is an open neighborhood $U$ of $x$ such that $N \cap U = N' \cap U$. \label{disjoint:locally unique}
\item If $N \cap N'$ is nonempty, and $N, N'$ are complete and connected, then $N = N'$. \label{disjoint:unique}
\item $N$ is injectively immersed. \label{disjoint:injective}
\end{enumerate}
\end{lemma}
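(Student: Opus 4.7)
I would prove (\ref{disjoint:locally unique}) first via a cut-and-paste argument combined with the strong maximum principle, and then deduce (\ref{disjoint:unique}) by a continuation argument and (\ref{disjoint:injective}) from (\ref{disjoint:locally unique}) applied to two hypothetical sheets at a self-intersection.

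For (\ref{disjoint:locally unique}), fix $x \in N \cap N'$ and pick $r \in (0, r_*]$ small enough that Theorem \ref{Hodge theorem} yields a continuous $d-2$-form $A$ on a neighborhood of $\overline{B(x,r)}$ with $F = \dif A$, and that, by the curvature bound (\ref{curvature bound for calibrated}), the components $N_0, N_0'$ of $N \cap B(x,r)$ and $N' \cap B(x,r)$ passing through $x$ are embedded $C^{1,1}$ graphs over their respective tangent planes at $x$. The crux is to show $T_x N = T_x N'$. Suppose instead these tangent planes differ; then after shrinking $r$, $N_0$ and $N_0'$ meet transversally along a $C^{1,1}$ codimension-$2$ submanifold $\Gamma$. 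I would then swap half-sheets of $N_0$ and $N_0'$ across $\Gamma$ to form two piecewise-$C^{1,1}$ oriented hypersurfaces $\Sigma_1, \Sigma_2$ with $\Sigma_1 \cup \Sigma_2 = N_0 \cup N_0'$ as point sets and $\partial \Sigma_1 + \partial \Sigma_2 = \partial N_0 + \partial N_0'$ as oriented chains on $\partial B(x,r)$. Smoothly rounding the corners along $\Gamma$ produces embedded $\tilde\Sigma_1, \tilde\Sigma_2$ with the same boundaries but strictly smaller total volume:
\[
\vol(\tilde\Sigma_1) + \vol(\tilde\Sigma_2) < \vol(N_0) + \vol(N_0').
\]
On the other hand, since $F = \dif A$ and $F$ calibrates both $N_0$ and $N_0'$, Stokes' theorem yields
\[
\int_{\tilde\Sigma_1} F + \int_{\tilde\Sigma_2} F = \int_{\partial\Sigma_1} A + \int_{\partial\Sigma_2} A = \int_{\Sigma_1} F + \int_{\Sigma_2} F = \vol(N_0) + \vol(N_0'),
\]
which contradicts the previous inequality via the comass bound $\int_{\tilde\Sigma_i} F \leq \vol(\tilde\Sigma_i)$. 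Thus $T_x N = T_x N'$, and the Hopf-type strong maximum principle for minimal graphs with coincident tangent planes forces $N_0 = N_0'$ near $x$.

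For (\ref{disjoint:unique}), the set of points of $N$ that lie in $N'$ is open in $N$ by (\ref{disjoint:locally unique}); it is closed in $N$ because the uniform $C^{1,1}$ bound (\ref{curvature bound for calibrated}) furnishes a $C^{1,1}$ limit sheet of $N'$ through any limit point of intersections, and this limit sheet must agree with $N$ by (\ref{disjoint:locally unique}). Connectedness of $N$ then gives $N \subseteq N'$, and symmetry yields $N = N'$. For (\ref{disjoint:injective}), a self-intersection of $N$ at some point $x$ would exhibit two distinct embedded $F$-calibrated local sheets through $x$, which by (\ref{disjoint:locally unique}) must coincide in a neighborhood of $x$, contradicting their distinctness.

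The hardest step is the transverse case of (\ref{disjoint:locally unique}): engineering the swap and corner-rounding construction so that the oriented boundaries of $\tilde\Sigma_i$ and $\Sigma_i$ match exactly, so that Stokes' theorem can be applied cleanly with the only continuous (and not smooth) primitive $A$ furnished by Theorem \ref{Hodge theorem}.
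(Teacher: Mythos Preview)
Your reductions of (\ref{disjoint:unique}) and (\ref{disjoint:injective}) to (\ref{disjoint:locally unique}) match the paper's. For (\ref{disjoint:locally unique}) the paper takes a different and shorter route to the equality of tangent planes: since $F$ pulls back to the area form on each of $N$ and $N'$, the vector $(\star F(x))^\sharp$ is forced to be the unit normal to both, so $T_xN=T_xN'$ at \emph{every} intersection point. Your cut-and-paste argument is a legitimate alternative that avoids invoking the pointwise value of a merely $L^\infty$ form; it is longer but arguably more honest about regularity.

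The real gap is in the second half of (\ref{disjoint:locally unique}). Once $T_xN=T_xN'$, you invoke a ``Hopf-type strong maximum principle for minimal graphs with coincident tangent planes'' to force $N_0=N_0'$. No such principle is available from tangency at a single point: writing $v:=u-u'$ for the difference of graphing functions, $v$ solves a linear elliptic equation and has a second-order zero at the origin, but nothing prevents $v$ from being, say, a nontrivial solution with leading term a degree-two harmonic polynomial. What the paper actually uses is that the tangent planes coincide at \emph{every} point of $N_0\cap N_0'$, so that $\{v=0\}\subseteq\{v=0,\,\dif v=0\}$; by Hardt--Simon this critical nodal set is $(d-3)$-rectifiable in the $(d-1)$-dimensional base, hence cannot disconnect it, so $v$ has a sign and the ordinary strong maximum principle finishes. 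Your argument can be repaired in exactly this way---your cut-and-paste contradiction applies at any transverse intersection point, yielding $\{v=0\}\subseteq\{\dif v=0\}$---but the Hardt--Simon step (or an equivalent nodal-set estimate) is essential and is not subsumed by any maximum principle.
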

\begin{proof}
We first observe that for each $x \in N$, $(\star F(x))^\sharp$ is the (unique) normal vector to $N$ at $x$ (and similarly for $N'$), and so if $x \in N \cap N'$ then $N, N'$ have the same tangent space at $x$.
So for each $x \in N \cap N'$, there exists $r > 0$ and normal coordinates $(\xi, \eta) \in \RR^{d - 1} \times \RR$ on $B(x, r)$ based at $x$, such that for each pair of sheets $N_* \subseteq N \cap B(x, r)$, $N_*' \subseteq N' \cap B(x, r)$ which contain $x$, there exists a relatively open set $V \subseteq \{\eta = 0\}$, an open set $U \subseteq B(x, r)$ containing $x$, and functions $u, u': V \to \RR$ such that:
\begin{prfenum}
\item $N_* \cap U = \{(\xi, u(\xi)): \xi \in V\}$.
\item $N_*' \cap U = \{(\xi, u'(\xi)) : \xi \in V\}$.
\item $u(0) = u'(0) = 0$.
\item If $u(\xi) = u'(\xi)$ then $\dif u(\xi) = \dif u'(\xi)$.
\end{prfenum}
Let $v := u - u'$. Then:
\begin{prfenum}
\item $v(0) = 0$.
\item $v$ satisfies a linear elliptic PDE on $V$ \cite[Proof of Theorem 7.3]{colding2011course}.
\item If $v(\xi) = 0$ then $\dif v(\xi) = 0$.
\end{prfenum}
We claim that $v$ is identically $0$.
If this is not true, the set $\{v = 0\} = \{v = \dif v = 0\}$ is $d - 3$-rectifiable \cite[Lemma 1.9]{Hardt89}, but $\dim V = d - 1$, so $\{v \neq 0\}$ is connected.
So either $v \geq 0$ or $v \leq 0$, and $v$ has a zero; this contradicts the maximum principle.

The above discussion shows that $N_* \cap U = N_*' \cap U$.
Taking $N = N'$ we see that $N$ only has one sheet in $B(x, r)$ which contains $x$, so (\ref{disjoint:injective}) holds.
So running the same argument, without assuming that $N = N'$, yields (\ref{disjoint:locally unique}).
A continuity argument then implies (\ref{disjoint:unique}).
\end{proof}

We must show that a limit of $F$-calibrated hypersurfaces is $F$-calibrated, and to make this precise we shall need the notion of a \dfn{Vietoris limit superior} of a sequence of closed sets, \cite[\S4.F]{kechris2012classical}.
If $(K_n)$ is a sequence of closed subsets of $M$, then $\limsup_{n \to \infty} K_n$ is the set of all $x$ such that for every open set $U \ni x$, there exist infinitely many $n \in \NN$ such that $U \cap K_n \neq \emptyset$; one easily checks that $\limsup_{n \to \infty} K_n$ is closed.

\begin{lemma}\label{limit of calibrateds is calibrated}
Let $F$ be a calibration, let $(N_n)$ be a sequence of $F$-calibrated complete connected immersed hypersurfaces, and let $K := \limsup_{n \to \infty} \overline{N_n}$.
For every $x \in K$ there exists a $F$-calibrated complete connected immersed hypersurface $N \subseteq K$ such that $x \in N$.
\end{lemma}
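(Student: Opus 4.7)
The plan is to extract $N$ as a pointed $C^{1,\alpha}$-limit of sheets of the $N_{n_k}$ approaching $x$, and then to verify the calibration condition locally via the $L^\infty$ Poincar\'e lemma. Since $x \in K$, I first pass to a subsequence $(n_k)$ and choose points $x_k \in N_{n_k}$ with $x_k \to x$. The uniform bounds (\ref{area bound for calibrated}) and (\ref{curvature bound for calibrated}) ensure that, in normal coordinates around each $x_k$, the sheet of $N_{n_k}$ through $x_k$ is a graph over its tangent plane with uniform $C^{1,\alpha}$-bounds for some fixed $\alpha \in (0, 1)$. A pointed Arzel\`a--Ascoli argument, iterated by a Cheeger--Gromov-style diagonal extraction and using the uniform curvature bound to propagate the local graph description along a growing neighborhood of $x_k$ in $N_{n_k}$, then yields (after passing to a further subsequence) a complete connected immersed $C^{1,\alpha}$ hypersurface $N$ containing $x$, obtained as the pointed limit of $(N_{n_k}, x_k)$. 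By construction every point of $N$ is a limit of points on the $N_{n_k}$, so $N \subseteq K$.

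Next, I show that $N$ is $F$-calibrated. Fix $y \in N$ and a radius $0 < r \leq r_*$ small enough that Theorem \ref{Hodge theorem} applies on $B(y, r)$; it furnishes a H\"older continuous $d - 2$-form $A$ on $B(y, r)$ with $F = \dif A$. Shrinking $r$ slightly if needed, pick a sheet $\Omega$ of $N \cap B(y, r)$ through $y$ with $C^1$ boundary; by local $C^{1,\alpha}$-convergence, there are sheets $\Omega_k \subseteq N_{n_k} \cap B(y, r)$ converging to $\Omega$ in $C^1$. Then $\vol(\Omega_k) \to \vol(\Omega)$ by the continuity of area in the $C^1$ topology, while Stokes' theorem combined with the continuity of $A$ yields
\[
\int_{\Omega_k} F \;=\; \int_{\partial \Omega_k} A \;\longrightarrow\; \int_{\partial \Omega} A \;=\; \int_\Omega F.
\]
Since each $\Omega_k$ is $F$-calibrated, $\vol(\Omega_k) = \int_{\Omega_k} F$, so passing to the limit gives $\vol(\Omega) = \int_\Omega F$. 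Letting $\Omega$ shrink to an arbitrarily small neighborhood of $y$ in $N$ forces the pullback of $F$ to $N$ to agree with the area form almost everywhere near $y$, and since $y \in N$ was arbitrary, $N$ is $F$-calibrated.

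The main obstacle is that $F$ is only in $L^\infty$, not continuous, so the calibration condition is a priori unstable under the $C^{1,\alpha}$-convergence we obtain; one cannot just pass to the limit in $\int_{\Omega_k} F = \vol(\Omega_k)$ naively. The $L^\infty$ Poincar\'e lemma (Theorem \ref{Hodge theorem}) is exactly what rescues us, by reducing $\int_{\Omega_k} F \to \int_\Omega F$ to a limit of integrals of a continuous form along $C^1$-converging boundaries, where the convergence is elementary. A secondary difficulty is that each $N_n$ is only injectively immersed and not embedded in $M$, which forces one to work with pointed $C^{1,\alpha}$-convergence of marked sheets through $x_k$, rather than treating the $N_n$ as closed subsets of $M$; this is why the statement is phrased in terms of the Vietoris $\limsup$ and only produces a hypersurface through a given point $x$ rather than a single limit hypersurface equal to all of $K$.
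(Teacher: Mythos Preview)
Your proposal is correct and follows essentially the same strategy as the paper: both arguments extract a subsequence $x_k \to x$, use the uniform curvature bound (\ref{curvature bound for calibrated}) to write nearby sheets as graphs with uniform control, pass to a local limit by compactness, and then verify the calibration condition on the limit via the $L^\infty$ Poincar\'e lemma (Theorem \ref{Hodge theorem}) combined with Stokes' theorem on $C^1$-converging boundaries. The only differences are in packaging: the paper invokes the minimal surface equation and interior Schauder estimates to upgrade to $C^2$ convergence and then builds the complete $N$ by taking the union of all $F$-calibrated extensions of the local piece $N^x$ inside $K$ (proving completeness by a uniform-radius contradiction), whereas you obtain completeness directly from a pointed diagonal extraction; both are standard and yield the same object.
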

\begin{proof}
By taking a subsequence, we may assume that there exist $x_n \in N_n$ such that $x_n \to x$.
By Lemma \ref{calibrated implies disjoint}, we may also assume that if $N_n \cap N_m$ is nonempty then $n = m$.
Combining this with the curvature bound (\ref{curvature bound for calibrated}), we obtain the hypotheses of \cite[Lemma 2.4]{BackusCML}.
The conclusion of that lemma is that for every $\delta > 0$ there exists $r > 0$ only depending on $M$, and normal coordinates $(\xi, \eta) \in \RR^{d - 1} \times \RR$ on $B(x, r)$ based at $x$ such that for every $n$,
\begin{equation}\label{approximate normal vector by vertical}
\|\normal_{N_n} - \partial_\eta\|_{C^0(B(x, r))} \leq \delta.
\end{equation}
If $\delta$ was chosen small enough, depending only on $M$, then by the vertical line test, there exists a relatively open set $V \subseteq \{\eta = 0\}$ and a sequence of functions $u_n$ on $V$, such that:
\begin{prfenum}
\item $x \in V$.
\item There exists $c_0 > 0$ which only depends on $M$ such that $\diam(V) \geq c_0$.
\item For every $n$, $N_n \cap \{(\xi, \eta) \in B(x, r): \xi \in V\} = \{(\xi, u_n(\xi)): \xi \in V\}$.
\end{prfenum}
The functions $u_n$ solve the minimal surface equation, 
$$Pu(\xi) := F(\xi, u(\xi), \dif u(\xi), \nabla^2 u(\xi)) = 0$$
where one can use \cite[(7.21)]{colding2011course} to show that $F$ has the form 
$$F(\xi, \eta, A, B) := \tr B + O((|\xi| + |\eta| + |A|)(1 + |B|))$$
where the implied constant only depends on $M$.
But $|\xi| + |u_n(\xi)| \lesssim r$ and, if 
$$\|\normal_{N_n} - \partial_\eta\|_{C^0(B(x, r))} \leq \frac{1}{10},$$
then one may show that
$$|\dif u_n(\xi)| \leq \|\dif u_n\|_{C^0} \lesssim \|\normal_{N_n} - \partial_\eta\|_{C^0(B(x, r))}.$$
So by (\ref{approximate normal vector by vertical}), we can first choose $\delta$ small enough depending on $M$, and then choose $r$ small enough depending on $\delta$, so that for every $n$ large enough depending on $r$, the equation $Pu_n = 0$ is uniformly elliptic.
In particular, by the interior Schauder estimate \cite[Theorem 6.2]{gilbarg2015elliptic}, we may choose $\delta$ small enough, depending only on $M$, that there exists a connected, relatively open set $W \subseteq V$ such that:
\begin{prfenum}
\item $x \in W$. \label{W prop 1}
\item There exists $c_1 > 0$ which only depends on $M$ such that $\diam(W) \geq c_1$. \label{W prop 2}
\item For every sufficiently large $n$, $\|u_n\|_{C^3(W)} \leq 1$.
\end{prfenum}
Therefore there exists $u \in C^2(W)$ such that:
\begin{prfenum}
\item After passing to a subsequence, $u_n \to u$ in $C^2(W)$.
\item $N^x := \{(\xi, u(\xi)): \xi \in W\}$ contains $x$.
\item $N^x \subseteq K$.
\end{prfenum}
We moreover claim that, possibly after shrinking $W$ (while preserving \ref{W prop 1} and \ref{W prop 2}):
\begin{prfenum}
\item $N^x$ is $F$-calibrated. \label{local limit surface is calibrated}
\item $N^x$ is geodesically convex. \label{local limit surface is convex}
\item There exists $c_2 > 0$ which only depends on $M$ such that $\dist_{N^x}(x, \partial N^x) \geq c_2$. \label{local limit surface has distance comparison}
\end{prfenum}
To prove this, let $N_n^x := \{(\xi, u_n(x)): \xi \in W\}$.
If $\diam(W)$ was chosen small enough (depending only on $M$), then we can use the $L^\infty$ Poincar\'e lemma, Theorem \ref{Hodge theorem}, to find a continuous $d - 2$-form $A$ on $W$ such that $\dif A = F$.
Since $u_n \to u$ in $C^2(W)$, we can compute using Stokes' theorem
$$\int_{N^x} F = \int_{\partial N^x} A = \lim_{n \to \infty} \int_{\partial N_n^x} A = \lim_{n \to \infty} \int_{N_n^x} F = \lim_{n \to \infty} \vol(N_n^x) = \vol(N^x),$$
which proves \ref{local limit surface is calibrated}.
By shrinking $W$ slightly more, we can impose \ref{local limit surface is convex}.
Moreover, since $\partial N^x \subset \partial W$, and the curvature bound (\ref{curvature bound for calibrated}) allows us to compare distances in $M$ and distances in $N^x$, \ref{local limit surface has distance comparison} follows from \ref{W prop 2}.

Let $N$ be the union of all $F$-calibrated connected immersed hypersurfaces contained in $K$ which extend $N^x$.
If $N$ is incomplete, then there exists $y \in N$ such that $\dist_N(y, \partial N) < c_2$.
Then $y \in K$, so there exists a $F$-calibrated connected immersed hypersurface $N^y \subseteq K$ such that $y \in N^y$ and \ref{local limit surface is convex} and \ref{local limit surface has distance comparison} hold.
But $N^y \subseteq N$, so by \ref{local limit surface is convex} and \ref{local limit surface has distance comparison},
$$\dist_N(y, \partial N^y) \geq \dist_N(y, \partial N^y) \geq c_2,$$
which is a contradiction. Therefore $N$ is complete.
\end{proof}

\begin{lemma}\label{existence of semicanonical lamination}
Let $F$ be a calibration in $\rho$.
Then the set of $F$-calibrated connected complete immersed hypersurfaces is the set of leaves of a lamination $\lambda_F$, which contains every measured oriented $F$-calibrated lamination.
\end{lemma}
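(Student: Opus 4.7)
The plan is to apply Theorem~\ref{disjoint surfaces are lamination} to the family $\mathcal S$ of all $F$-calibrated complete connected immersed hypersurfaces, so I would first verify its hypotheses one at a time. Nonemptiness of $\mathcal S$ is a direct consequence of Lemma~\ref{calibrated means measured stretch}: the measured oriented $F$-calibrated lamination it produces has at least one leaf, which lies in $\mathcal S$. That $F$-calibrated hypersurfaces are minimal follows from the standard calibration argument recalled in \S\ref{calibrated review sec}; the uniform second-fundamental-form bound $\sup_{N\in\mathcal S}\|\Two_N\|_{C^0}<\infty$ is the estimate~(\ref{curvature bound for calibrated}); and distinct members of $\mathcal S$ are disjoint as point-sets by Lemma~\ref{calibrated implies disjoint}(\ref{disjoint:unique}), since two complete connected $F$-calibrated hypersurfaces that meet must coincide.

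The one hypothesis of Theorem~\ref{disjoint surfaces are lamination} that deserves care is closedness of $\bigcup_{N \in \mathcal S} N$. Given $x$ in its closure, I would pick $x_n \to x$ with $x_n \in N_n \in \mathcal S$, so that $x$ lies in the Vietoris limit superior $K := \limsup_n \overline{N_n}$. Lemma~\ref{limit of calibrateds is calibrated} then directly furnishes a complete connected $F$-calibrated immersed hypersurface $N \subseteq K$ with $x \in N$, and this $N$ belongs to $\mathcal S$ by construction. All hypotheses verified, Theorem~\ref{disjoint surfaces are lamination} produces the Lipschitz lamination $\lambda_F$ whose leaves are exactly $\mathcal S$.

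For the second assertion, let $\kappa$ be a measured oriented $F$-calibrated lamination. Each leaf of $\kappa$ is, by definition, a complete connected injectively immersed $F$-calibrated hypersurface, hence a member of $\mathcal S$ and therefore a leaf of $\lambda_F$; in particular $\supp\kappa \subseteq \supp\lambda_F$. To see $\supp\kappa$ is saturated, suppose a leaf $N$ of $\lambda_F$ meets $\supp\kappa$ at some point $x$; then $x$ also lies on some leaf $N'$ of $\kappa$, and because $N$ and $N'$ are both complete, connected, and $F$-calibrated, Lemma~\ref{calibrated implies disjoint}(\ref{disjoint:unique}) forces $N = N'$, so the entire leaf $N$ is contained in $\supp\kappa$. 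Thus $\supp\kappa$ is a closed saturated subset of $\supp\lambda_F$, making $\kappa$ a sublamination of $\lambda_F$.

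I do not expect any serious obstacle, since every substantive ingredient (existence of an $F$-calibrated leaf, the disjointness dichotomy, the curvature bound, closure of the leaf set under Vietoris limits, and the general lamination-from-disjoint-minimal-surfaces theorem) has already been established in the lemmas above. The only minor bookkeeping is ensuring that the complete connected hypersurface extracted from $K$ in Lemma~\ref{limit of calibrateds is calibrated} is the same object that Theorem~\ref{disjoint surfaces are lamination} accepts as input, and that the leaves of $\kappa$ are of the same regularity class as the members of $\mathcal S$; both are immediate from the respective definitions.
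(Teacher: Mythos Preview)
Your proposal is correct and follows essentially the same route as the paper: assemble the family $\mathcal S$ of $F$-calibrated complete connected immersed hypersurfaces, verify the hypotheses of Theorem~\ref{disjoint surfaces are lamination} using Lemma~\ref{calibrated means measured stretch} (nonemptiness), Lemma~\ref{calibrated implies disjoint} (disjointness), the bound~(\ref{curvature bound for calibrated}) (curvature), and Lemma~\ref{limit of calibrateds is calibrated} (closedness), then read off $\lambda_F$. Your treatment of the second clause---that any measured oriented $F$-calibrated $\kappa$ is a sublamination of $\lambda_F$---is slightly more explicit than the paper's, which leaves it implicit in the identification of the leaf set with $\mathcal S$.
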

\begin{proof}
Let $\mathscr L_F$ be the set of connected complete immersed $F$-calibrated hypersurfaces.
By Lemma \ref{calibrated implies disjoint}, $\mathscr L_F$ consists of pairwise disjoint injectively immersed minimal hypersurfaces.
The curvature bound (\ref{curvature bound for calibrated}) only depends on $M$, and implies that the elements of $\mathscr L_F$ have curvatures bounded uniformly in $C^0$.
By Lemma \ref{calibrated means measured stretch}, $\mathscr L_F$ is nonempty.

Let $E$ be the union of all elements of $\mathscr L_F$.
If $(x_n)$ is a sequence in $E$, say $x_n \in N_n$ for some $N_n \in \mathscr L_F$, and $x_n \to x$, then $x \in \limsup_{n \to \infty} \overline{N_n}$.
So by Lemma \ref{limit of calibrateds is calibrated}, there exists $N \in \mathscr L_F$ such that $x \in N$.
In particular, $x \in E$, so $E$ is closed.

By the above discussion and Theorem \ref{disjoint surfaces are lamination}, $\mathscr L_F$ is the set of leaves of some lamination $\lambda_F$.
\end{proof}

\begin{proof}[Proof of Theorem \ref{existence of calibrated lam}]
Let $S$ be the set of calibrations in $\rho$, which is nonempty since $\|\rho\|_\infty = 1$.
Then there is a lamination which is $F$-calibrated by every $F \in S$.
Indeed, by Lemma \ref{calibrated means measured stretch}, there is a measured oriented lamination $\kappa$ which is $F$-calibrated for some $F \in S$, and by Lemma \ref{properties of calibrated laminations}, $\kappa$ is $F$-calibrated for every $F \in S$.

For every $F \in S$, let $\lambda_F$ be the calibrated lamination produced by Lemma \ref{existence of semicanonical lamination}.
By Lemma \ref{existence of intersections}, there is a lamination $\lambda_\rho$ whose set of leaves is the intersection over $F \in S$ of the sets of leaves of $\lambda_F$. 
Then $\lambda_\rho$ has all desired properties.
\end{proof}

\section{Transverse measures on the canonical lamination}\label{canonical structure}
\subsection{Ergodic theory of \texorpdfstring{$\lambda_\rho$}{the canonical lamination}}
Let $M$ be a closed oriented Riemannian manifold of dimension $2 \leq d \leq 7$.
For each oriented lamination $\lambda$ in $M$, let $\mathcal M(\lambda)$ be the set of transverse probability measures to sublaminations of $\lambda$.
This set inherits the vague topology on the space of Borel probability measures on $\supp \lambda$; in view of (\ref{decomposition of Ruelle Sullivan}), this topology is the same as the topology on the space of measured laminations (see \cite{BackusCML}) restricted to $\mathcal M(\lambda)$.
It is clear that $\mathcal M(\lambda)$ is convex, and one may use the compactness of the space of Borel probability measures on the compact metrizable space $\supp \lambda$ \cite[Theorem 17.23]{kechris2012classical} to show that $\mathcal M(\lambda)$ is compact.
By the Krein-Milman theorem, if $\mathcal M(\lambda)$ is nonempty, then so is its set of extreme points, $\mathcal E(\mathcal M(\lambda))$.

\begin{definition}
A measure $\mu \in \mathcal M(\lambda)$ is \dfn{ergodic} if, for every saturated set $E$, either $\mu(E) = 0$ or $\mu(E) = 1$.
\end{definition}

\begin{lemma}
Every extreme point of $\mathcal M(\lambda)$ is ergodic, and the set of ergodic measures is linearly independent in the space of signed Borel measures on $\supp \lambda$.
\end{lemma}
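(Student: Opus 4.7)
My plan is to prove the two claims in sequence: first, that extremality forces ergodicity via the classical convex-decomposition argument; second, that distinct ergodic measures are mutually singular, which then readily implies linear independence.

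For the extremality direction, I would argue the contrapositive. If $\mu \in \mathcal M(\lambda)$ is not ergodic, pick a saturated Borel set $E$ with $0 < \mu(E) < 1$. Since $E$ is saturated, in each flow box $(\Psi_\alpha, K_\alpha)$ the set $E$ is a union of plaques parametrized by a Borel subset $K_E^\alpha \subseteq K_\alpha$, so the local transverse measure of $\mu|_E$ is exactly $\mu_\alpha|_{K_E^\alpha}$. The normalized measure $\mu_1 := \mu|_E/\mu(E)$ is thus a transverse probability measure to the sublamination whose local leaf space is $\supp(\mu_\alpha|_{K_E^\alpha})$, and similarly for $\mu_2 := \mu|_{E^c}/\mu(E^c)$. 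Since $\mu_1(E) = 1 \neq 0 = \mu_2(E)$, the identity $\mu = \mu(E)\mu_1 + \mu(E^c)\mu_2$ exhibits a nontrivial convex decomposition.

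For linear independence, I first claim any two distinct ergodic measures $\mu, \nu \in \mathcal M(\lambda)$ are mutually singular. Applying ergodicity of $\mu$ to the closed saturated set $\supp \nu$ gives $\mu(\supp \nu) \in \{0,1\}$; by symmetry I may reduce to the case $\supp \mu = \supp \nu$, else mutual singularity is immediate. Setting $\sigma := (\mu+\nu)/2$ and $f := d\mu/d\sigma \in [0,2]$, the crucial step is to show $f$ is $\sigma$-a.e.\ constant on leaves. This follows from the Ruelle--Sullivan formula (\ref{coarea formula on laminations}): in each flow box both $\mu$ and $\sigma$ disintegrate against the common leaf Hausdorff measure with respect to transverse measures $\mu_\alpha$ and $\sigma_\alpha$, so in flow-box coordinates $f$ agrees with $d\mu_\alpha/d\sigma_\alpha$, depending only on the transverse coordinate; the local representatives glue consistently because transition maps preserve both transverse measures. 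Granted this, each level set $\{f > t\}$ agrees $\sigma$-a.e.\ with a saturated set, so ergodicity of $\mu$ (resp.\ $\nu$) forces $f$ to be $\mu$-a.e.\ equal to a constant $c_\mu$ (resp.\ $\nu$-a.e.\ equal to $c_\nu$). If $c_\mu \neq c_\nu$, then any $t$ strictly between them yields a saturated set separating $\mu$ from $\nu$; if $c_\mu = c_\nu$, integrating $f \equiv c_\mu$ against $\sigma$ gives $c_\mu = 1$, hence $\mu = \sigma = \nu$, contradicting distinctness.

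Finally, pairwise mutual singularity implies linear independence by the standard argument: for each $j$, choose for each $i \neq j$ a Borel set $A_{ji}$ with $\mu_j(A_{ji}) = 1$ and $\mu_i(A_{ji}) = 0$, and set $E_j := \bigcap_{i \neq j} A_{ji}$; testing any relation $\sum c_i \mu_i = 0$ against $E_j$ yields $c_j = 0$. The main technical obstacle is rigorously establishing the saturation of the Radon--Nikodym derivative $f = d\mu/d\sigma$. The intuition is transparent---transverse measures couple a transverse datum on $K_\alpha$ with a common leaf measure, so the leaf direction cancels in any ratio---but making this precise requires unpacking the flow-box disintegration and checking the cocycle condition for the local Radon--Nikodym functions under overlap maps.
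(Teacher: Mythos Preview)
Your proof is correct. The first claim (extreme implies ergodic) is handled identically in the paper, which simply cites the standard argument from Einsiedler--Ward.

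For linear independence, your route genuinely differs from the paper's. You prove the stronger fact that distinct ergodic measures are mutually singular, via the Radon--Nikodym derivative $f = d\mu/d\sigma$ against $\sigma = (\mu+\nu)/2$ and the observation that $f$ is leafwise constant (hence its superlevel sets are saturated); linear independence then drops out by testing against separating sets. The paper instead argues directly by an inductive reduction: given a finite dependence $\sum_{\mu \in S} c_\mu \mu = 0$ with some $c_\nu \neq 0$, it uses that transverse measures are determined by their values on saturated sets to find a saturated $E$ splitting $S$ into a proper subset $T$ (where each measure gives $E$ full mass) and its complement (where each gives $E$ zero mass), then restricts the relation to $T$ and iterates down to $|S| \leq 1$. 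Your approach is the classical ergodic-theory one and yields mutual singularity as a bonus; the paper's is more combinatorial and avoids disintegration and Radon--Nikodym entirely, at the cost of not producing the singularity statement. Both are short, and the technical point you flag---that $f$ depends only on the transverse coordinate in each flow box---is exactly what makes your version work and is correctly justified by the Ruelle--Sullivan disintegration.
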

\begin{proof}
The first claim is an easy modification of the proof of \cite[Theorem 4.4]{einsiedler2010ergodic}, and the second is essentially the proof that every ultrafilter on a finite set is principal.
To be more precise, let $S$ be a finite set of ergodic measures, and choose $c_\mu \in \RR$ such that $\sum_{\mu \in S} c_\mu\mu = 0$.
The measures in $S$ are determined by their values on saturated sets, so if some coefficient $c_\nu$ is nonzero, then there exists a saturated set $E$ and a proper subset $T \subset S$ such that:
\begin{prfenum}
\item $\nu \in T$.
\item For every $\mu \in T$, $\mu(E) = 1$.
\item For every $\mu \in S \setminus T$, $\mu(E) = 0$.
\end{prfenum}
Then $S' := \{1_E \mu: \mu \in T\}$ satisfies:
\begin{prfenum}
\item For every $\mu \in S'$, $\mu$ is ergodic. 
\item $\sum_{\mu \in S'} c_\mu \mu = 0$.
\item There exists $\nu \in S'$ such that $c_\nu$ is nonzero.
\item $\card S' < \card S$.
\end{prfenum}
Therefore we can repeat the argument with $S$ replaced by $S'$.
After finitely many iterations, we reduce to the case that $\card S \leq 1$, in which case we have a contradiction.
\end{proof}

Now let $B$ be the stable unit ball of $H_{d - 1}(M, \RR)$ and $B^*$ be the costable unit ball.
For each $\rho \in B^*$, we consider the set $\mathcal M(\lambda_\rho)$ of transverse probability measures to the canonical lamination, $\lambda_\rho$.
By Lemma \ref{properties of calibrated laminations}, $\mathcal M(\lambda_\rho)$ is the set of measured oriented laminations which are calibrated by some calibration in $\rho$.
The map which sends a measured oriented lamination to its homology class restricts to a an affine map $\mathcal M(\lambda_\rho) \to \rho^*$, which is surjective by Lemma \ref{calibrated means measured stretch}.
In particular, if $\alpha$ is an extreme point of the dual flat, $\rho^*$, then $\alpha$ is the homology class of an ergodic measure.

We summarize the above discussion as Corollary \ref{extreme points are indecomposable}.

\subsection{The Arnoux--Levitt lemma}
Following an idea of Auer and Bangert \cite{Auer01}, we study $\mathcal M(\lambda_\rho)$ using an ergodic-theoretic lemma of Arnoux and Levitt, \cite[Proposition 3.1]{Arnoux1986}.
We need a more general version of the Arnoux--Levitt lemma, and the original proof is in French, so we include a proof here.

Let us identify transverse measures with positive transverse cocycles (cocycles which act on curves transverse to the lamination and are cooriented with the lamination); this is standard, and we refer to \cite[\S7.2]{daskalopoulos2020transverse} for a justification of this identification.

\begin{lemma}\label{equality of measures}
Let $\lambda$ be an oriented lamination, $U \subseteq M$ open, and $\mu, \nu \in \mathcal M(\lambda)$.
Assume that:
\begin{enumerate}
\item $\lambda$ is not a closed hypersurface, and there is a leaf of $\lambda$ which is dense in $\supp \lambda \cap U$. \label{msrequality:irrational}
\item $\mu(U) = \nu(U) = 1$. \label{msrequality:ergodic}
\item There exists $b \geq 0$ such that for every $1$-cycle $C \subset U$ which is transverse to $\lambda$, $\mu(C) - \nu(C) \in b\ZZ$. \label{msrequality:integrality}
\end{enumerate}
Then $\mu = \nu$.
\end{lemma}
\begin{proof}
We follow \cite[\S3, Lemme]{Arnoux1986} which is a similar result when $\lambda$ is a minimal component of a foliation.

Let $C \subset U$ be a transverse curve to $\lambda$, which is cooriented with $\lambda$, and such that $\mu(C) > 0$ and $\nu(C) > 0$.
By assumptions (\ref{msrequality:irrational}) and (\ref{msrequality:ergodic}), there exists a leaf $N$ such that:
\begin{prfenum}
\item $N$ is dense in $\supp \lambda \cap U$.
\item $N \cap C$ is infinite and dense in $\supp \lambda \cap U \cap C$.
\end{prfenum}
If we prove that $\mu(C) = \nu(C)$ for every sufficiently short cooriented transverse curve $C$, then the result follows for all curves by $\sigma$-additivity.
Therefore we may shorten $C$ so that:
\begin{prfenum}
\item $C$ begins and ends on $N$.
\item If $b > 0$ then $\mu(C) + \nu(C) < b$.
\end{prfenum}
Let $\sigma \subset N$ be a curve from the beginning of $C$ to the end of $C$, and let $C'$ be a deformation of $C \cup \sigma$ through homotopies which leave $\lambda \setminus N$ fixed, so that $C'$ is a transverse cycle to $\lambda$.
Then, by (\ref{msrequality:integrality}), for some $k \in \ZZ$,
$$\mu(C) = \mu(C') = \nu(C') + kb = \nu(C) + kb.$$
If $b = 0$ then we are done; otherwise, since $\mu(C) + \nu(C) < b$, it follows that $k = 0$.
\end{proof}

\begin{lemma}[Arnoux--Levitt lemma]\label{abelian case}
Let $\lambda$ be an oriented lamination, and let $\mathcal I(\lambda)$ be the set of ergodic probability measures transverse to sublaminations $\kappa \subseteq \lambda$ such that $\kappa$ is not a closed hypersurfaces.
Then the homology classes of measures in $\mathcal I(\lambda)$ are linearly independent, and if they span $H_{d - 1}(M, \RR)$, then $H_{d - 1}(M, \RR) = 0$.
\end{lemma}
\begin{proof}
Let $\kappa_1, \dots, \kappa_q$ be distinct sublaminations of $\lambda$, such that for each $1 \leq i \leq q$:
\begin{prfenum}
\item There exists $m_i \geq 1$ and distinct probability measures $\mu_i^1, \dots, \mu_i^{m_i}$ such that for each $1 \leq j \leq m_i$, $\mu_i^j$ is ergodic and transverse to $\kappa_i$.
\item $\kappa_i$ is not a closed hypersurface.
\end{prfenum}
Notice that if $q > 0$ then $H_{d - 1}(M, \RR) \neq 0$.
We define open sets $U_i$ and leaves $N_i$ of $\kappa_i$, such that:
\begin{prfenum}
\item $N_i$ is dense in $\supp \kappa_i \cap U_i$.
\item For every $j$, $\mu_i^j(U_i) = 1$.
\end{prfenum}
To do this we break into cases:
\begin{prfenum}
\item Suppose that $\kappa_i$ is not a foliation with a dense leaf. By the Morgan--Shalen decomposition (Theorem \ref{MorganShelan}) and the ergodicity of $\mu_1^i$, $\kappa_i$ is exceptional, and if $i' \neq i$ then $\supp \kappa_{i'}$ avoids an open set $U_i$ containing $\supp \kappa_i$. Let $N_i$ be any leaf of $\kappa_i$.
\item Suppose that $\kappa_i$ is a foliation with a dense leaf $N_i$. In particular $\kappa_i = \lambda$. For any $i' \neq i$, and any $j$, $\mu_i^j$ is ergodic and $\supp \kappa_{i'}$ is saturated, so $\mu_i^j(\supp \kappa_{i'}) = 0$. Therefore $U_i := M \setminus \bigcup_{i' \neq i} \supp \kappa_i$ has $\mu_i^j(U_i) = 1$. Since $\kappa_{i'}$ is not a foliation, $\supp \kappa_{i'}$ is a saturated closed subset of $M$ which is not $M$; therefore it misses the dense leaf $N_i$.
\end{prfenum}
Next we show:
\begin{prfenum}
\item For every $i$, $([\mu_i^j])_j$ is linearly independent. \label{abelian:irrational means independent}
\end{prfenum}
Suppose that there are $a_j \in \RR$ such that $\sum_j a_j [\mu^i_j] = 0$, let $\mu$ be the sum of $a_j \mu_i^j$ over $j$ such that $a_j > 0$, and let $\nu$ be the sum of $-a_j \mu_i^j$ over $j$ such that $a_j < 0$.
Then $[\mu] - [\nu] = 0$, so by Lemma \ref{equality of measures} with $b = 0$ and $U = M$, $\mu - \nu = 0$, hence $\sum_j a_j \mu^i_j = 0$.
By ergodicity, $(\mu_i^j)_j$ is linearly independent, so $a_j = 0$, establishing \ref{abelian:irrational means independent}.

We claim there are $1$-cocycles $t_i$ such that:
\begin{prfenum}
\item There exists a $1$-cycle $C_i \subset U_i$ such that $t_i(C_i) \neq 0$.
\item For every $j \neq i$, and every $1$-cycle $C \subset U_j$, $t_i(C) = 0$. \label{abelian:rational independent}
\item There exists $q_i \in \QQ$ such that for every $1$-cycle $C$, $t_i(C) \in q_i \ZZ$. \label{abelian:rational independent 2}
\end{prfenum}
By composing with the natural homomorphism $H_1(M, \ZZ) \to H_1(M, \RR)$, we can think of the cohomology class of $\mu_i^1$ as a homomorphism
$$[\mu_i^1]: H_1(M, \ZZ) \to \RR.$$
Since $M$ is compact, $H_1(M, \ZZ)$ is finitely generated and so we can slightly perturb the value of $[\mu_i^1]$ on the generators to obtain $t_i$ with the desired properties.

To complete the proof it is enough to show that
\begin{prfenum}
\item $(t_i, [\mu_i^j])_{i, j}$ is linearly independent. \label{abelian:irrational means independent 2}
\end{prfenum}
Suppose that there are $a_i^j, a_i \in \RR$ such that 
$$\sum_{i, j} a_i^j [\mu_i^j] + \sum_i a_i t_i = 0.$$
Then for every $1$-cycle $C$ in $U_i$,
$$\sum_j a_i^j \mu_i^j(C) = - a_i t_i(C) \in a_i q_i \ZZ.$$
Let $\mu_i^+$ be the sum of $a_i^j \mu_i^j$ taken over $j$ such that $a_i^j > 0$, and let $\mu_i^-$ be the sum of $a_i^j \mu_i^j$ taken over $j$ such that $a_i^j < 0$.
Then $\mu_i^+ - \mu_i^- \in a_i q_i \ZZ$, so by Lemma \ref{equality of measures}, $\mu_i^+ = \mu_i^-$, or in other words $\sum_j a_i^j [\mu_i^j] = 0$.
So by \ref{abelian:irrational means independent}, $a_i^j = 0$, so $\sum_i a_i t_i = 0$.
By \ref{abelian:rational independent} and \ref{abelian:rational independent 2}, $(t_i)_i$ is linearly independent, so $a_i = 0$, establishing \ref{abelian:irrational means independent 2}.
\end{proof}

\begin{theorem}\label{ergodic decomposition thm}
Let $\lambda$ be an oriented lamination which is not a foliation with a dense leaf.
Then $\mathcal E(\mathcal M(\lambda))$ is the set of ergodic measures transverse to sublaminations of $\lambda$.
\end{theorem}
\begin{proof}
We have already seen that every extreme point of $\mathcal M(\lambda)$ is an ergodic measure and so we just need to show the converse.
By the Morgan--Shalen decomposition, Theorem \ref{MorganShelan}, we may reduce to the case that $\lambda$ either has only closed leaves, or has no closed leaves.
If $\lambda$ has only closed leaves, then every ergodic sublamination is a single leaf, and in particular cannot be written as a convex combination of any other ergodic sublaminations.
Otherwise, $\lambda$ has no closed leaves, so by Lemma \ref{abelian case}, $\mathcal E(\mathcal M(\lambda))$ is finite, say $\{\mu_1, \dots, \mu_m\}$.
For each $i \neq j$, there exists a saturated set $E_{ij}$ such that $\mu_i(E_{ij}) = 1$ but $\mu_j(E_{ij}) = 0$.
In particular, if $\mu$ is not an extreme point and we write $\mu = \sum_i d_i \mu_i$, then there exist $i \neq j$ such that $d_i, d_j > 0$.
Then $d_i \leq \mu(E_{ij}) \leq 1 - d_j$, so $\mu$ is not ergodic.
\end{proof}

\subsection{The dual flat \texorpdfstring{$\rho^*$}{of a cohomology class}}
Let $b_1 := \dim H_1(M, \RR)$ be the first Betti number of $M$, and let $\rho \in \partial B^*$ be a costable unit class.
We are going to prove Theorem \ref{vertex counting}: \emph{$\rho^*$ is a polytope, vertices of $\rho^*$ have rational direction iff they are represented by closed leaves of $\lambda_\rho$, and $\rho^*$ has at most $b_1 - 1$ vertices of irrational direction}, Corollary \ref{strict convexity iff unique ergodicity}: \emph{if $B$ is strictly convex, then every ergodic calibrated lamination is uniquely ergodic}, and Theorem \ref{Auer Bangert thm}(\ref{ABt1}): \emph{if $S \subset \partial B$ is flat and $\alpha, \beta \in S$, then their intersection product $\alpha \cdot \beta$ vanishes}.

\begin{lemma}\label{rational implies closed}
Let $F$ be a calibration and let $\lambda$ be an ergodic, $F$-calibrated, measured oriented lamination.
The following are equivalent:
\begin{enumerate}
\item $[\lambda]$ has rational direction.
\item $\lambda$ is a closed hypersurface.
\end{enumerate}
\end{lemma}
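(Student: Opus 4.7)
The easy direction is immediate: if $\supp \lambda$ is a closed oriented hypersurface $N$ with positive transverse weight $c$, then $[\lambda] = c[N]$ lies in a rational ray of $H_{d-1}(M, \RR)$, since $[N] \in H_{d-1}(M, \ZZ)/\mathrm{torsion}$.

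For the converse, suppose $\alpha := [\lambda]$ has rational direction. By Lemma \ref{properties of calibrated laminations}, $\lambda$ is homologically minimizing, and after positively rescaling the transverse measure $\mu$ I may assume $\alpha \in H_{d-1}(M, \ZZ)/\mathrm{torsion}$, so that $\alpha$ corresponds to a homomorphism $\pi_1(M) \to \ZZ$. The plan is to construct a circle-valued least gradient ``potential'' for $\lambda$, collapse its image to a single point using ergodicity, and then invoke Theorem \ref{MorganShelan} to rule out all cases but a single closed leaf. Concretely, Theorem \ref{least gradient iff lamination} supplies an $\alpha$-equivariant function of least gradient $u : \tilde M \to \RR$ with $T_\lambda = du$, whose leaves of $\tilde \lambda$ pull back to boundaries of superlevel sets $\partial \{u > y\}$. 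Since $\alpha$ is $\ZZ$-valued, $\bar u := u \bmod 1$ descends to $\bar u : M \to \RR/\ZZ$, and each leaf of $\lambda$ is contained in a single fibre $\bar u^{-1}(\bar y)$.

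For any Borel $A \subseteq \RR/\ZZ$, the set $\bar u^{-1}(A) \cap \supp \lambda$ is a union of leaves, hence saturated; ergodicity of $\mu$ therefore forces $\bar u_*\mu$ to be a Dirac mass at some $\bar y_0 \in \RR/\ZZ$, and every leaf of $\tilde \lambda$ sits at one of the integer-separated levels $\bar y_0 + \ZZ$ of $u$. To finish I split by Theorem \ref{MorganShelan}: ergodicity places $\mu$ on a single clopen piece of the Morgan--Shelan decomposition, so $\lambda$ is a parallel family of closed leaves, an exceptional lamination, or a foliation with a dense leaf. In the first case, the transverse measure lives on a closed subset of $\RR$ (the local leaf space), and ergodicity forces it to a Dirac atom, yielding the desired single closed leaf. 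The remaining two cases must be excluded: each contains a leaf dense in $\supp \lambda$, and holonomy-invariance of $\mu$ on any transversal forbids atoms along a dense leaf (they would propagate to infinitely many atoms of equal mass on a single transversal), so $u$ is continuous. But then $u$ is a continuous function on $\tilde M$ which takes values in the discrete set $\bar y_0 + \ZZ$ on $\supp \tilde \lambda$ and is locally constant on its complement, so $u$ is itself locally constant on the connected manifold $\tilde M$, contradicting $du = T_\lambda \neq 0$. I expect the main obstacle to be this continuity-forces-constancy step in the exceptional case, where $u$ on a transversal resembles a devil's staircase on a Cantor set; I would handle it fibrewise on a single transversal, using continuity of $u$ to match gap values with Cantor-set values and conclude that $u$ is constant on the transversal.
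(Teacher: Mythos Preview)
Your proposal is correct and takes a genuinely different route from the paper. The paper argues directly: if some leaf $N$ is not closed, it accumulates on itself at a point $x \in M$; lifting to $\tilde M$, the infinitely many sheets of $N$ near $x$ become level sets $\partial\{\tilde u > t\}$ at integer-separated heights $t$ (since they all project to the same value in $\Sph^1$). The monotonicity formula for minimal hypersurfaces gives a uniform lower bound $c > 0$ on the area of each such sheet in a fixed ball, and the coarea formula then forces $\int_{B(\tilde x, r)} \star|\dif\tilde u| = \infty$, contradicting $\tilde u \in BV_{\loc}$. Ergodicity enters only in the last line, to pass from ``every leaf is closed'' to ``$\lambda$ is a single closed leaf.''

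Your argument instead front-loads ergodicity (to pin $\bar u_*\mu$ at a single point of $\Sph^1$) and invokes the Morgan--Shalen decomposition to reduce to the exceptional and dense-leaf cases, finishing with a soft continuity argument on a transversal rather than any quantitative minimal-surface input. This trades the monotonicity formula for Theorem~\ref{MorganShelan}, which is a reasonable exchange; your route is arguably more elementary in that it never touches the area estimates. The devil's-staircase step you flag is real but routine once isolated: on a short transversal $\tau$ with $\mu(\tau) < 1$, the function $f(t) = \mu|_\tau([0,t])$ is continuous (atomless measure), monotone, and equal to a fixed constant at $\mu|_\tau$-a.e.\ point (from $\bar u_*\mu = \delta_{\bar y_0}$ and $\mu(\tau)<1$). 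Hence $f$ is constant on $\supp\mu|_\tau$; since $\dif f = \mu|_\tau$ vanishes off $\supp\mu|_\tau$ and $f$ is continuous, $f$ is globally constant, forcing $\mu|_\tau = 0$ and giving the contradiction.
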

\begin{proof}
If $\lambda$ is a closed hypersurface $N$, then $[\lambda]$ is a rescaling of $[N]$, and $[N]$ is the image of the class of $N$ in $H_{d - 1}(M, \ZZ)$.

Conversely, assume that $[\lambda]$ has rational direction.
Since $\pi_1(M)$ is finitely generated, we may rescale $M$ suitably so that $[\lambda]$ is a representation $\alpha: \pi_1(M) \to \ZZ$.
Since such representations are identified with homotopy classes of maps $M \to \Sph^1$, the Ruelle-Sullivan current $T_\lambda$ takes the form $\dif u$ for some map $u: M \to \Sph^1$.
Let $\tilde u \in BV_\loc(\tilde M, \RR)$ be the universal cover of $u$.

Towards contradiction, let $N$ be a leaf of $\lambda$ which is not closed, and let $\tilde N \subset \tilde M$ be the preimage of $N$.
Since $N$ is not closed and $M$ is compact, there exists $x \in N$ such that $N$ accumulates on itself at $x$, in the sense that for every sufficiently small $r > 0$, $N \cap B(x, r)$ has infinitely many connected components.
Let $\tilde x \in \tilde M$ be a point in the preimage of $x$.
Thus the set $T$ of $t \in \RR$ such that $\partial \{\tilde u > t\}$ intersects $B(\tilde x, r/2)$ is infinite.

We claim that there exists $c > 0$ such that for any $t \in \RR$ such that $\partial \{\tilde u > t\}$ intersects $B(\tilde x, r/2)$,
$$\vol(\partial \{\tilde u > t\} \cap B(\tilde x, r)) \geq c.$$
To see this, let $\tilde y \in \partial \{u > t\} \cap B(\tilde x, r/2)$.
Since $\partial \{\tilde u > t\}$ is smooth, its density $\theta$ (in the sense of rectifiable sets) at $\tilde y$ is the volume of the unit ball of $\RR^{d - 1}$.
By the monotonicity formula for minimal hypersurfaces \cite[Theorem 7.11]{Marques}, there exists $A \geq 0$ which only depends on $M$ such that for any $\rho > 0$,
$$\vol(\partial \{\tilde u > t\} \cap B(\tilde y, \rho)) \geq e^{-A\rho^2} \theta \rho^{d - 1}$$
and the claim follows by taking $c := e^{-Ar^2/4} \theta$ and $\rho := r/2$.

The image of $T$ in $\Sph^1$ is a point, so for any $t, s \in T$, either $t = s$ or $|t - s| \geq 1$.
We may assume that there is an infinite increasing sequence $(t_n)$ in $T$.
By the coarea formula \cite[Theorem 1.23]{Giusti77},
$$\int_{B(x, r)} \star |\dif \tilde u| \geq \sum_{n=0}^\infty \int_{t_n}^{t_{n + 1}} \vol(\partial \{u > t\}) \dif t \geq c \sum_{n=0}^\infty (t_{n + 1} - t_n) = \infty,$$
which is a contradiction, since $\tilde u \in BV_\loc(\tilde M, \RR)$.

So if $[\lambda]$ has rational direction, then every leaf of $\lambda$ is closed.
Since $\lambda$ is ergodic, it follows that $\lambda$ is a single closed leaf.
\end{proof}

\begin{lemma}\label{lma: linear dependence means disconnection}
Let $\mathcal S$ be a finite set of disjoint closed hypersurfaces in $M$.
If $M \setminus \bigcup \mathcal S$ is connected, then the set of homology classes of $\mathcal S$ is linearly independent over $\RR$.
\end{lemma}
\begin{proof}
Assume towards contradiction that there is a $d$-chain $\sigma$, $N_1, \dots, N_k \in \mathcal S$, and $c_1, \dots, c_k \in \RR \setminus \{0\}$, such that $\partial \sigma = \sum_{j \leq k} c_j N_j$.
We identify $\sigma$ with the function on the open set $U := M \setminus \bigcup \mathcal S$ which sends a point $x$ to the multiplicity of $\sigma$ at $x$.
Then $\sigma$ is locally constant, but $U$ is connected, so $\sigma$ is actually constant.
Therefore $\partial \sigma = 0$, so $\{N_1, \dots, N_k\}$ is linearly dependent when viewed as a set of $d - 1$-chains.
So there exists $2 \leq j \leq k$ such that $N_1 \cap N_j \neq \emptyset$, a contradiction.
\end{proof}

\begin{proof}[Proof of Theorem \ref{vertex counting}]
Let $\Pi: \mathcal M(\lambda_\rho) \to \partial B$ be the map which sends a transverse probability measure to its homology class.
Let $m$ be the number of measures on sublaminations which are not closed hypersurfaces.
By Lemma \ref{rational implies closed}, $\Pi$ sends closed hypersurfaces to rational extreme points of $\rho^*$ and all other measures to irrational extreme points.
By Corollary \ref{extreme points are indecomposable}, $\Pi$ is surjective, so the number of irrational extreme points is at most $m$.
By Lemma \ref{abelian case}, $m \leq \max(0, b_1 - 1)$.

We claim that $\rho^*$ has only finitely many rational extreme points.
If not, there are infinitely many closed leaves $N_n$ of $\lambda_\rho$ with distinct homology classes $\alpha_n \in \mathcal E(\rho^*)$.
Then $\{\alpha_1, \dots, \alpha_{b_1 + 1}\}$ is linearly dependent, so by Lemma \ref{lma: linear dependence means disconnection}, $U := M \setminus \bigcup_{n \leq b_1 + 1} N_n$ must be disconnected.
So every noncompact leaf $N$ of $\lambda_\rho$ must miss a connected component of the open set $U$; it follows that $N$ is not dense in $M$.
In particular, $\lambda_\rho$ is not a foliation with a dense leaf.

Let $\mu \in \mathcal M(\lambda_\rho)$ be defined by $\mu(N_n) = 2^{-n}$ (so $\mu(M \setminus \bigcup_n N_n) = 0$).
Then $\supp \mu$ is a sublamination $\kappa$ of $\lambda_\rho$ in which $\bigcup_n N_n$ is dense, and $\kappa$ is not a foliation with a dense leaf.
So by the Morgan--Shalen decomposition (Theorem \ref{MorganShelan}), $\kappa$ is the disjoint union of finitely many parallel families of closed leaves.
By the pigeonhole principle, there exist $m < n$ such that $N_m$ and $N_n$ are in the same parallel family.
Therefore $\alpha_m = \alpha_n$, a contradiction; this establishes the claim.

Therefore $\rho^*$ is a closed convex set with only finitely many extreme points.
So $\rho^*$ is a convex polytope.
\end{proof}

\begin{proof}[Proof of Corollary \ref{strict convexity iff unique ergodicity}]
Suppose that $B$ is strictly convex, and let $(\kappa, \mu)$ be an ergodic lamination which is $F$-calibrated for some calibration $F$.
We may assume that $\kappa$ is not a closed hypersurface, since closed hypersurfaces are uniquely ergodic.
Let $\alpha$ be the homology class of $(\kappa, \mu)$ and let $\rho$ be the cohomology class of $F$.
By Lemma \ref{properties of calibrated laminations}, $\mu \in \mathcal M(\lambda_\rho)$, so by Corollary \ref{extreme points are indecomposable} and strict convexity of $B$, $\rho^* = \{\alpha\}$.
Therefore, by Lemma \ref{abelian case}, $\mathcal M(\lambda_\rho) = \{\mu\}$, so $\kappa$ is uniquely ergodic.
\end{proof}

\begin{proof}[Proof of Theorem \ref{Auer Bangert thm}(\ref{ABt1})]
There exists $\rho \in \partial B^*$ such that $S \subseteq \rho^*$.
By Lemma \ref{calibrated means measured stretch}, there exist measured sublaminations $\kappa_\alpha, \kappa_\beta$ of $\lambda_\rho$, of classes $\alpha, \beta$.
Let $\dif u_\alpha, \dif u_\beta$ be their Ruelle-Sullivan currents, and suppose that $x$ is in the union of their supports.
If $N$ denotes the leaf of $\lambda_\rho$ containing $x$, then for $\sigma = \alpha, \beta$,
$$\dif u_\sigma(x) = \normal_N^\flat(x) \mu_\sigma(x)$$
where $\mu_\sigma$ is given by (\ref{decomposition of Ruelle Sullivan}).
In particular, $\dif u_\alpha|_{\supp \dif u_\beta}$ is a (possibly distributional) scalar field times $\dif u_\beta$, so $\dif u_\alpha \wedge \dif u_\beta = 0$, hence $\alpha \cdot \beta = 0$.
\end{proof}
\subsection{Strict convexity and the derived series of \texorpdfstring{$\pi_1(M)$}{the fundamental group}}
Let $\Gamma := \pi_1(M)$ and let $(\Gamma^{(n)})$ be the derived series of $\Gamma$.
We now prove Theorem \ref{Auer Bangert thm}(\ref{ABt2}): \emph{if $\Gamma^{(1)}/\Gamma^{(2)}$ is a torsion group, then the stable unit ball $B$ is strictly convex}.

If $\hat M \to M$ is a Galois covering space, let $\Gal(\hat M, M)$ be the Galois group of deck transformations of $\hat M \to M$, so $\Gamma = \Gal(\tilde M, M)$.
The \dfn{universal abelian covering space} of $M$, $\tilde M^{\rm ab} \to M$, is the Galois covering space such that
\begin{equation}\label{Galois group is homology}
\Gal(\tilde M^{\rm ab}, M) = \frac{\Gamma}{\Gamma^{(1)}} = H_1(M, \ZZ).
\end{equation}
Since $\RR$ is abelian, we have a natural isomorphism
\begin{align*}
\Hom(\Gamma, \RR) &= \Hom(\Gamma/\Gamma^{(1)}, \RR)
\end{align*}
and every $\alpha$-equivariant function $u$ on $\tilde M$ descends to an $\alpha$-equivariant function $u^{\rm ab}$ on $\tilde M^{\rm ab}$.
Since $\RR$ is abelian and torsion-free, and $\Gamma^{(1)} = \pi_1(\tilde M^{\rm ab})$, $\Gamma^{(1)}/\Gamma^{(2)}$ is a torsion group iff
$$H^1(\tilde M^{\rm ab}, \RR) = \Hom(\Gamma^{(1)}, \RR) = 0.$$

The next two lemmata appeared in \cite{Auer12}, though the proof of Theorem \ref{Auer Bangert thm}(\ref{ABt2}) does not.
Since this manuscript is not publicly available, or complete, we reproduce them here with full credit to the original authors.

\begin{lemma}[{\cite{Auer12}}]\label{superlevel sets are connected}
Let $u$ be an $\alpha$-equivariant function of least gradient on $\tilde M$.
Then the set $\{u^{\rm ab} > t\}$ is connected.
\end{lemma}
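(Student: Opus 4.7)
The plan is to argue by contradiction. Suppose $\{u^{\rm ab} > t\}$ is disconnected, and let $A$ be a proper connected component. I will construct an $\alpha^{\rm ab}$-equivariant competitor to $u^{\rm ab}$ by ``flattening'' it at level $t$ on the union of deck-translates of $A$, show this descends to $M$, and derive a strict decrease of gradient mass on $M$, contradicting the least gradient hypothesis on $u$.

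For each $c \in H_1(M, \ZZ) = \Gal(\tilde M^{\rm ab}, M)$, the translate $cA$ is a connected component of the shifted superlevel set $\{u^{\rm ab} > t + \alpha^{\rm ab}(c)\}$. In the case when $A$ is bounded, its stabilizer in $H_1(M, \ZZ)$ must be finite (any nontrivial infinite-order element would generate an unbounded orbit inside $A$), so the family $\{cA\}_{c \in H_1(M, \ZZ)}$ consists of pairwise disjoint bounded sets. Set $E := \bigcup_c cA$ and define
$$\tilde u^{\rm ab}(x) := \begin{cases} t + \alpha^{\rm ab}(c) & \text{if } x \in cA, \\ u^{\rm ab}(x) & \text{if } x \notin E. \end{cases}$$
The translates being disjoint makes this well-defined, and it is $\alpha^{\rm ab}$-equivariant by construction. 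Using continuity of $u^{\rm ab}$ (via local regularity for the lamination of Theorem \ref{least gradient iff lamination}, e.g., $\infty$-harmonicity in dimension two), we have $\partial(cA) \subseteq \{u^{\rm ab} = t + \alpha^{\rm ab}(c)\}$, so $\tilde u^{\rm ab}$ is continuous across $\partial E$ and lies in $BV_{\rm loc}$. Pulling back to $\tilde M$, this defines an $\alpha$-equivariant competitor $\tilde u$ to $u$, with $[d\tilde u] = \alpha$.

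A fundamental-domain computation on a domain $D \subset \tilde M^{\rm ab}$ shows $\int_M \star |d\tilde u| = \int_{D \setminus E} \star |du^{\rm ab}|$, while $\int_M \star |du| = \int_D \star |du^{\rm ab}|$. The deficit $\int_{D \cap E} \star |du^{\rm ab}|$ is strictly positive because $u^{\rm ab}$ is non-constant on the open set $A$, taking value $t$ on $\partial A$ by continuity and strictly greater values in the interior. This strict decrease contradicts the least gradient hypothesis.

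The main obstacle is ruling out the case when every component of $\{u^{\rm ab} > t\}$ is unbounded. My plan is to apply the same construction to $-u$ (which is $(-\alpha)$-equivariant and of least gradient) to see that no component of $\{u^{\rm ab} < t\}$ is bounded either. Then, invoking the density estimate (\ref{reverse isoperimetric inequality}) for perimeter-minimizing sets on the bounded-curvature manifold $\tilde M^{\rm ab}$: in large balls, each disjoint unbounded component of $\{u^{\rm ab} > t\}$ would occupy positive volume density and contribute boundary that the perimeter-minimizing set $\{u^{\rm ab} > t\}$ cannot afford. Making this yield a quantitative contradiction is the key remaining step.
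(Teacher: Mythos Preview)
Your approach---constructing an equivariant competitor by flattening $u^{\rm ab}$ on deck-orbits of a component $A$---is genuinely different from the paper's, which never builds a competitor at all. The paper instead finds a connected ``half-space'' $H \subseteq \{u^{\rm ab} > t\}$ built from translates of a fundamental domain, takes a component $X$ disjoint from $H$, and runs a dichotomy on how positive deck translates act on $X$: either some translate sends $X$ into itself (forcing $X$ to meet $H$), or all positive translates move $X$ off itself, in which case $X$ descends injectively to the minimal cover $\hat M$ on which $u$ is defined, yielding an unbounded perimeter-minimizing set of infinite volume (Lemma~\ref{unbounded implies infinite measure}) that injects into the closed manifold $M$.

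Your argument has a concrete error in the bounded case and an admitted gap in the unbounded case. The error: the translates $\{cA\}_{c \in H_1(M,\ZZ)}$ are \emph{not} pairwise disjoint. The set $cA$ is a component of $\{u^{\rm ab} > t + \alpha^{\rm ab}(c)\}$, and for $\alpha^{\rm ab}(c) > 0$ this superlevel set is contained in $\{u^{\rm ab} > t\}$, so $cA$ may well sit strictly inside $A$ (or inside another translate). Finiteness of the stabilizer only tells you the translates are not all equal; it does not prevent nesting. Consequently your competitor $\tilde u^{\rm ab}$ is ill-defined on points lying in several nested translates, and there is no obvious repair that preserves equivariance. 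Separately, your appeal to continuity of $u^{\rm ab}$ via ``$\infty$-harmonicity'' conflates the two sides of the duality: $\infty$-harmonic functions are the calibrations (primitives of $\dif v$), whereas $u^{\rm ab}$ is a function of least gradient, which is only $BV$ in general and can jump across leaves of $\lambda_\rho$. The mass-reduction step can in fact be salvaged in $BV$ without continuity (truncation does not increase variation), but the disjointness failure is fatal as written.

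For the unbounded case, the sketch using the density estimate does not yield a contradiction: having several unbounded perimeter-minimizing components in $\tilde M^{\rm ab}$ is not by itself inconsistent with any local estimate, since $\tilde M^{\rm ab}$ has infinite volume. The paper's argument bypasses this entirely by passing to the cover $\hat M$ with cyclic deck group $\Gamma/\ker\alpha$ and exploiting that the problematic component injects into the \emph{compact} base $M$---that is where finiteness of volume bites.
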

\begin{proof}
Suppose that $\{u^{\rm ab} > t\}$ is disconnected.
Then $\alpha$ is nonzero: if $\alpha = 0$, then $u$ descends to a function of least gradient on $M$, which is constant since $M$ is closed, and then $\{u^{\rm ab} > t\}$ is either empty or $M$, a contradiction.

Let $F$ be a fundamental domain of $M$ in $\tilde M^{\rm ab}$.
Since $u^{\rm ab} \in L^\infty_\loc$ \cite[Theorem 4.3]{Gorny20} and $F$ is compact, there exists $t_0 \in \RR$ such that $u > t_0$ on $F$.
Using (\ref{Galois group is homology}) to interpret $H_1(M, \ZZ)$ as the group of deck transformations of $\tilde M^{\rm ab}$, let
$$H := \bigcup_{\substack{\rho \in H_1(M, \ZZ) \\ \langle \alpha, \rho\rangle > t - t_0}} \rho(F).$$
For every $x \in H$, there exists $\rho \in H_1(M, \ZZ)$ and $y \in F$, $x = \rho(y)$, and then 
$$u^{\rm ab}(x) = u^{\rm ab}(y) + \langle \alpha, \rho\rangle > t_0 + t - t_0 = t$$
so $H \subseteq \{u^{\rm ab} > t\}$.

Since $H$ is the set of translations of the connected fundamental domain $F$ by a half-space in the deck group, $H$ is connected.
But $\{u^{\rm ab} > t\}$ is disconnected, so there must be a connected component $X$ of $\{u^{\rm ab} > t\}$ which is disjoint from $H$.
For any $\rho \in H_1(M, \ZZ)$ such that $\langle \alpha, \rho\rangle > 0$, $\rho$ sends $\{u^{\rm ab} > t\}$ into itself, since for every $x \in \{u^{\rm ab} > t\}$,
$$u^{\rm ab}(\rho(x)) = u^{\rm ab}(x) + \langle \alpha, \rho\rangle > t + 0 = t.$$
In particular, $\rho$ sends $X$ into a component of $\{u^{\rm ab} > t\}$.
Thus there are two cases to consider:
\begin{prfenum}
\item There exists $\rho \in H_1(M, \ZZ)$ such that $\langle \alpha, \rho\rangle > 0$, but $\rho(X) \subseteq X$. \label{connectivity:intoself}
\item For every $\rho \in H_1(M, \ZZ)$ such that $\langle \alpha, \rho\rangle > 0$, $\rho(X)$ is a subset of a component of $\{u^{\rm ab} > t\}$ which is not $X$. \label{connectivity:notintoself}
\end{prfenum}

In case \ref{connectivity:intoself}, there exists $x \in X$ and $\theta \in H_1(M, \ZZ)$ such that $\theta(x) \in F$; then, for $m$ large, 
$$\langle \alpha, m\rho - \theta\rangle > t - t_0,$$
so $m\rho(x) \in H$.
Therefore $m\rho(X)$ meets $H$, so $X$ meets $H$, a contradiction.

In case \ref{connectivity:notintoself}, let $\hat M$ be the minimal covering space on which $u$ descends to a function $\hat u: \hat M \to \RR$, thus $\Gal(\hat M, M) = \Gamma/\ker(\alpha)$.
Then $\hat u$ has least gradient, and $X$ descends to a component $\hat X$ of $\{\hat u > y\}$.
Then the projection $\psi: \hat M \to M$ restricts to an \emph{injective} map $\hat X \to M$.
Indeed, if $x_1, x_2 \in \hat X$ and $\psi(x_1) = \psi(x_2)$, then there exists $\rho \in H_1(M, \ZZ)/\ker(\alpha)$ such that $\psi(x_1) = x_2$.
If $\rho$ is nonzero, then after switching the roles of $x_1, x_2$ as necessary, we may assume that $\rho$ is represented by some $\overline \rho \in H_1(M, \ZZ)$ such that $\langle \alpha, \overline \rho\rangle > 0$, a contradiction.

By a straightforward generalization of \cite[Theorem 1]{BOMBIERI1969}, $\hat X$ is perimeter-minimizing.
If $\hat X$ is bounded, then $\partial \hat X$ is competing with the empty set and hence is empty, a contradiction; so $\hat X$ is unbounded and therefore has infinite volume by Lemma \ref{unbounded implies infinite measure}.
But $\psi$ is an isometry, so $\psi_*(\hat X)$ is an infinite-volume subset of the closed manifold $M$, a contradiction.
\end{proof}

\begin{lemma}[{\cite{Auer12}}]\label{abelian cover connected}
Let $u$ be an $\alpha$-equivariant function of least gradient on $\tilde M$, and let $\mathscr G$ be a set of curves in $\tilde M^{\rm ab}$ which spans $H_1(\tilde M^{\rm ab}, \RR)$.
If $\partial \{u^{\rm ab} > t\}$ misses every curve in $\mathscr G$, then $\partial \{u^{\rm ab} > t\}$ is connected.
\end{lemma}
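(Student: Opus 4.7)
The plan is to argue by contradiction. Suppose $\partial V := \partial\{u^{\rm ab} > t\}$ is disconnected, so we may write $\partial V = A \sqcup B$ with $A, B$ nonempty clopen subsets of $\partial V$. My aim is to construct a loop $\gamma$ in $\tilde M^{\rm ab}$ whose algebraic intersection number with $A$, regarded as a Borel--Moore cycle, is nonzero, and then to derive a contradiction from the hypotheses on $\mathscr G$ via Poincar\'e--Lefschetz duality.

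The key ingredient is that both $V := \{u^{\rm ab} > t\}$ and $W := \{u^{\rm ab} < t\}$ are connected. The first is Lemma \ref{superlevel sets are connected} directly; the second follows by applying the same lemma to $-u$, which is $(-\alpha)$-equivariant and of least gradient. By Theorem \ref{least gradient iff lamination}, $\partial V$ is locally a union of smooth minimal hypersurfaces separating $V$ from $W$, so in particular every neighborhood of every point of $\partial V$ meets both $V$ and $W$. Pick $p \in A$ and $q \in B$, together with nearby points $p_V, q_V \in V$ and $p_W, q_W \in W$. I build $\gamma$ as the concatenation of four segments: a short transverse arc $\alpha_p$ from $p_W$ to $p_V$ crossing $A$ exactly once and avoiding $B$; a path $\gamma_V \subset V$ from $p_V$ to $q_V$ (by path-connectedness of $V$); a short transverse arc $\alpha_q$ from $q_V$ to $q_W$ crossing $B$ exactly once and avoiding $A$; and a path $\gamma_W \subset W$ from $q_W$ back to $p_W$ (by path-connectedness of $W$). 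By construction $\gamma$ meets $\partial V$ transversely in exactly two points, one on $A$ (with a definite sign) and one on $B$, so its algebraic intersection number satisfies $\gamma \cdot A = \pm 1$.

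To finish, observe that $A$ is a closed, co-oriented (possibly non-compact) smooth hypersurface in $\tilde M^{\rm ab}$ without boundary, so it represents a class $[A] \in H_{d-1}^{\rm BM}(\tilde M^{\rm ab}, \RR)$; its Poincar\'e--Lefschetz dual in $H^1(\tilde M^{\rm ab}, \RR)$ pairs with $[\gamma] \in H_1(\tilde M^{\rm ab}, \RR)$ to give $\gamma \cdot A \neq 0$. Since $\mathscr G$ spans $H_1(\tilde M^{\rm ab}, \RR)$, I can write $[\gamma] = \sum_i c_i [\sigma_i]$ as a finite linear combination with $\sigma_i \in \mathscr G$ and $c_i \in \RR$. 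By hypothesis, each $\sigma_i$ is disjoint from $\partial V \supseteq A$, so $\sigma_i \cdot A = 0$; linearity of the intersection pairing then yields the contradiction $\gamma \cdot A = \sum_i c_i (\sigma_i \cdot A) = 0$.

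The main technical obstacle I foresee is arranging the transverse arcs $\alpha_p, \alpha_q$ when the leaves of $\lambda_u$ making up $\partial V$ can accumulate on themselves, and setting up the intersection pairing cleanly when $A$ is non-compact (so that $[A]$ lives naturally in Borel--Moore homology rather than singular homology). Both issues are controlled by the $C^1$ regularity and disjointness of leaves supplied by Theorem \ref{least gradient iff lamination}, together with the fact that locally $\partial V$ is a co-oriented embedded hypersurface with $V$ on one side and $W$ on the other.
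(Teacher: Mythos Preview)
Your proposal is correct and follows essentially the same approach as the paper: both use Lemma~\ref{superlevel sets are connected} (applied to $u$ and $-u$) to get connectedness of the two sides of $\partial\{u^{\rm ab}>t\}$, build a loop $\gamma$ that crosses one component once, and conclude via the intersection pairing that $[\gamma]$ is homologically nontrivial. The paper's version is terser---it picks two components $N_1,N_2$, builds $\gamma$ crossing $N_1$ once, and stops at ``$[\gamma]\cdot[N_1]=1$''---whereas you spell out the final step (writing $[\gamma]$ in terms of $\mathscr G$ and using that each $\sigma_i$ misses $A$) and are more explicit about the Borel--Moore/Poincar\'e--Lefschetz framework needed since $A$ is noncompact.
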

\begin{proof}
We reason by contrapositive.
Let $N_1, N_2$ be two distinct components of $\partial \{u^{\rm ab} > t\}$.
By Lemma \ref{superlevel sets are connected} (and the analogous result for sublevel sets), $\tilde M^{\rm ab} \setminus \partial \{u^{\rm ab} > t\}$ has two components $E_1, E_2$.
We construct a curve $\gamma$, transverse to $N_1$, which starts at a point $x \in N_1$, passes through $E_1$, crosses $N_2$ into $E_2$, and then returns to $x$.
In particular $\gamma$ meets $N_1$ at a single point, so their intersection number $[\gamma] \cdot [N_1] = 1$ (possibly after reorienting).
Therefore $[\gamma]$ is a nontrivial class in $H_1(\tilde M^{\rm ab}, \RR)$.
\end{proof}

\begin{proof}[Proof of Theorem \ref{Auer Bangert thm}(\ref{ABt2})]
We prove the contrapositive.
If $B$ is not strictly convex, then there exists $\rho \in \partial B^*$ such that $\rho^*$ is not singleton.
In particular, there are two distinct extreme points $\alpha, \beta \in \mathcal E(\rho^*)$, and by Corollary \ref{extreme points are indecomposable}, we can find distinct ergodic measured oriented sublaminations $\kappa_\alpha, \kappa_\beta$ of $\lambda_\rho$.
Let $u_\alpha, u_\beta$ be primitives of the Ruelle-Sullivan currents on $\tilde M$; by equivariance, they drop to functions $u^{\rm ab}_\alpha, u^{\rm ab}_\beta$ on the universal abelian cover $\tilde M^{\rm ab}$.

There must exist leaves $N_\alpha$ of $\kappa_\alpha$, and $N_\beta$ of $\kappa_\beta$, which are distinct.
If this is not true, then both $\kappa_\alpha, \kappa_\beta$ are the same closed hypersurface, and in particular $\alpha = \beta$, a contradiction.
In particular, by adding constants to $u_\alpha$ and $u_\beta$, we may assume that $\partial \{u_\alpha > 0\}$ and $\partial \{u_\beta > 0\}$ descend to distinct leaves of the covering lamination $\tilde \lambda^{\rm ab}_\rho$.
As sets, $\partial \{u_\alpha > 0\}$ and $\partial \{u_\beta > 0\}$ are boundaries and therefore are closed; they are also disjoint, since they are distinct leaves of the same lamination.
Therefore they are separated by open sets.

Since $\star |\dif u_\alpha|$ and $\star |\dif u_\beta|$ are elements of $\mathcal M(\lambda_\rho)$, so is their mean, which can be expressed as $\star |\dif u|$ where $u := (u_\alpha + u_\beta)/2$.
In particular, $u$ has least gradient, and
$$\partial \{u > 0\} = \partial \{u_\alpha > 0\} \cup \partial \{u_\beta > 0\}$$
and since the right hand side is separated by open sets, $\partial \{u > 0\}$ is disconnected. 
So by Lemma \ref{abelian cover connected}, $H_1(\tilde M^{\rm ab}, \RR)$ is nonzero.
\end{proof}

\section{Epilogue}
\subsection{The earthquake norm}\label{Teichmuller conjectures}
The picture which seems to be emerging is that the stable norm is highly analogous to the earthquake norm on the cotangent space $T_\sigma^* \mathscr T_g$ to the Teichm\"uller space of a closed hyperbolic surface $(\Sigma_g, \sigma)$ of genus $g$.
The starting point for this observation is the earthquake theorem, \cite{Kerckhoff83}, which asserts that one can identify each $\alpha \in T_\sigma^* \mathscr T_g$ uniquely with a measured geodesic lamination in $(\Sigma_g, \sigma)$; the \dfn{earthquake norm} $\|\alpha\|_{\rm eq}$ is the mass of the corresponding measure.
It follows from \cite[Theorem 1.6]{daskalopoulos2023} that for each geodesic lamination $\lambda$ in $(\Sigma, \sigma)$, the map $\mathcal M(\lambda) \to T_\sigma^* \mathscr T_g$ is affine.
The dual norm to the earthquake norm is the stretch norm, which is given by infinitesimal minimizing Lipschitz maps, just as the costable norm is given by calibrations.

The earthquake norm satisfies an analogue of Theorem \ref{Auer Bangert thm}(\ref{ABt1}).
(Theorem \ref{Auer Bangert thm}(\ref{ABt2}) holds vacuously, since $\pi_1(\Sigma_g)$ is nonabelian and free.)

\begin{theorem}[{\cite[Theorem 6.1]{huang2024earthquakemetricteichmullerspace}}]\label{maximal flats are Lagrangian}
Let $\alpha, \beta \in T_\sigma^* \mathscr T_g$ satisfy $\|\alpha\|_{\rm eq} = \|\beta\|_{\rm eq} = 1$.
The following are equivalent:
\begin{enumerate}
\item $\alpha, \beta$ are contained in the same maximal flat of the earthquake unit sphere.
\item As measured geodesic laminations, $\alpha, \beta$ do not intersect transversely.
\end{enumerate}
\end{theorem}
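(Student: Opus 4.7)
The strategy is to adapt the proof of Theorem \ref{Auer Bangert thm}(\ref{ABt1}) to the Teichm\"uller setting, with the role of the canonical calibrated lamination taken by the infinitesimal analogue of Thurston's canonical maximally-stretched lamination from Theorem \ref{existence of stretched lam}. The goal is to show that every maximal flat of the earthquake unit sphere is the image of $\mathcal M(\lambda)$ under the Lie-valued Ruelle--Sullivan embedding into $T_\sigma \mathscr T_g$, for some geodesic lamination $\lambda$ on $(M, \sigma)$. The proposition then follows immediately from the non-crossing of leaves of a lamination.

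The implication (2) $\Rightarrow$ (1) is essentially immediate once the framework is set up. If $\alpha$ and $\beta$ do not intersect transversely, then $\lambda := \supp \alpha \cup \supp \beta$ is itself a geodesic lamination, and both $\alpha, \beta$ are transverse probability measures to $\lambda$, so $\alpha, \beta \in \mathcal M(\lambda)$. The Lie-valued Ruelle--Sullivan map embeds $\mathcal M(\lambda)$ affinely into $T_\sigma \mathscr T_g$, and since the earthquake norm of a transverse probability measure equals its total transverse mass, namely $1$, the image lies in the earthquake unit sphere. A convex subset of the unit sphere lies in a single flat, and hence in some maximal flat containing both $\alpha$ and $\beta$.

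For the implication (1) $\Rightarrow$ (2), write the common maximal flat as $F = v^* := \{\gamma : \|\gamma\|_1 = 1,\ \omega(\gamma, v) = 1\}$ for some $v \in T_\sigma \mathscr T_g$ with $\|v\|_\infty = 1$; such a $v$ exists by convex duality together with Wolpert's Theorem \ref{Wolpert duality}. I would then construct an \emph{infinitesimal canonical maximally-stretched lamination} $\lambda_v$ containing $\supp \gamma$ for every $\gamma \in v^*$, by analogy with the construction of $\lambda_\rho$ in Theorem \ref{existence of calibrated lam}: concretely, $\supp \lambda_v$ would be built as the closure of the union of the supports of all ergodic measured laminations in $v^*$, assembled into a single lamination by verifying that any two such supports are pairwise non-crossing. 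Given $\lambda_v$, since $\supp \alpha \cup \supp \beta \subseteq \supp \lambda_v$ and leaves of a geodesic lamination do not cross, $\alpha$ and $\beta$ do not intersect transversely.

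The main obstacle is the non-crossing property inside the construction of $\lambda_v$, which is the Teichm\"uller-theoretic analogue of Lemma \ref{calibrated implies disjoint}: one must show that leaves of two ergodic measured laminations $\alpha, \beta \in v^*$ cannot cross transversely. A natural approach is by contradiction: a transverse intersection forms a geodesic bigon, and one can perform a local surgery (``smoothing the bigon'') which preserves the Lie-valued Ruelle--Sullivan cohomology class, and hence the Weil--Petersson pairing $\omega(\cdot, v)$, while strictly decreasing the hyperbolic length. This would contradict the fact that $\alpha, \beta$ already saturate the inequality $\omega(\cdot, v) \leq \|v\|_\infty \|\cdot\|_1$ at unit earthquake norm.
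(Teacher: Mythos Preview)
The paper does not prove this theorem; it is quoted as \cite[Theorem 6.1]{huang2024earthquakemetricteichmullerspace} and used as input for the discussion in \S\ref{Teichmuller conjectures}. So there is no in-paper proof to compare against, and your proposal must be evaluated on its own.

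Your argument for (2) $\Rightarrow$ (1) is fine. The difficulty, as you correctly identify, is (1) $\Rightarrow$ (2), and here your proposal has a genuine gap. First, the sentence ``a transverse intersection forms a geodesic bigon'' is incorrect: two hyperbolic geodesics meeting transversely intersect in a single point, and there are no geodesic bigons in a hyperbolic surface. Presumably you mean to resolve the crossing (cut and reglue at the intersection point), but then the resulting object is only piecewise geodesic, not a measured geodesic lamination, so its ``earthquake norm'' is not \emph{a priori} its hyperbolic length. More seriously, your claim that this surgery ``preserves the Lie-valued Ruelle--Sullivan cohomology class'' is unjustified: the current $T_\mu$ depends on the tangent field $B_\lambda$ of the lamination, not just on an underlying $1$-cycle, and a local cut-and-paste changes $B_\lambda$ discontinuously at the corner. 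There is no reason the class in $H^1(M, \mathfrak g_\sigma)$ should survive this.

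What one actually needs is a strict subadditivity statement for the earthquake norm: if $i(\alpha,\beta) > 0$, then the measured geodesic lamination $\gamma$ with $E_0'(\gamma,\sigma) = E_0'(\alpha,\sigma) + E_0'(\beta,\sigma)$ (furnished by Theorem \ref{local earthquake theorem}) satisfies $\ell_\gamma(\sigma) < \ell_\alpha(\sigma) + \ell_\beta(\sigma)$. Given this, the convexity of $v^*$ forces $\tfrac12(\alpha+\beta) \in v^*$, hence $\|\tfrac12(\alpha+\beta)\|_1 = 1$, contradicting strict subadditivity. But proving strict subadditivity is precisely the content of the cited theorem, and your surgery sketch does not establish it. In the abelian setting of Theorem \ref{flats are nonintersecting} this step is trivial because the wedge of parallel $1$-currents vanishes pointwise; in the $\mathfrak g_\sigma$-valued setting there is no such pointwise identity, and the argument in \cite{huang2024earthquakemetricteichmullerspace} uses genuinely different tools (properties of the length function along earthquake paths) that you have not invoked.
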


The intersection product on measured geodesic laminations on $(\Sigma_g, \sigma)$ corresponds to the Weil-Petersson $2$-form; in particular, it is symplectic.
So by Theorem \ref{maximal flats are Lagrangian}, every earthquake flat $F$ is contained in a $3g - 3$-dimensional subspace of $T_\sigma^* \mathscr T_g$.
Therefore the earthquake norm also satisfies an analogue of Theorem \ref{vertex counting}:

\begin{corollary}\label{crly: earthquake vertex counting}
Let $F \subset T_\sigma^* \mathscr T_g$ be a maximal flat of the earthquake unit sphere.
Then:
\begin{enumerate}
\item There exists a geodesic lamination $\lambda$ such that $F = \mathcal M(\lambda)$.
\item $F$ is a convex polytope with at most $3g - 3$ vertices.
\end{enumerate}
\end{corollary}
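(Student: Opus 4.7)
My plan is to prove (1) by exhibiting $\lambda$ directly from $F$ and appealing to maximality, and to prove (2) by combining the Lagrangian dimension bound already recorded above with the linear independence of ergodic transverse measures.

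For (1), I would set $\lambda$ to be the closure of $\bigcup_{\alpha \in F} \supp \alpha$. By Theorem \ref{maximal flats are Lagrangian}, any two elements of $F$ are measured geodesic laminations whose supports do not intersect transversely, so $\lambda$ is a closed union of pairwise non-transversely-intersecting simple complete geodesics, and hence a geodesic lamination. Every $\alpha \in F$ is then a transverse probability measure on a sublamination of $\lambda$, so $F \subseteq \mathcal M(\lambda)$. Since $\mathcal M(\lambda)$ is itself a flat of the earthquake unit sphere (as recorded in the discussion preceding the corollary), maximality of $F$ forces $F = \mathcal M(\lambda)$.

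For (2), any two elements of $F$ are $\omega$-orthogonal by Theorems \ref{maximal flats are Lagrangian} and \ref{Wolpert duality}, so the linear span of $F$ is an isotropic subspace of the symplectic vector space $(T_\sigma \mathscr T_g, \omega)$, and therefore has dimension at most $3g - 3$. The vertices of $F = \mathcal M(\lambda)$ are its extreme points, which are precisely the ergodic transverse probability measures on sublaminations of $\lambda$. Since ergodic measures are linearly independent as signed Borel measures (by the lemma in \S\ref{canonical structure}), it will suffice to transport this linear independence across the infinitesimal earthquake map; the vertices will then be linearly independent vectors in a $(3g - 3)$-dimensional subspace, forcing at most $3g - 3$ of them.

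The main obstacle I anticipate is precisely this transport of linear independence, since Theorem \ref{local earthquake theorem} provides a uniqueness statement only for genuine (nonnegative) measured laminations, not for their signed combinations. I would resolve this by splitting any hypothetical relation $\sum_i c_i \alpha_i = 0$ in $T_\sigma \mathscr T_g$, among vertex earthquake vectors $\alpha_i$ corresponding to ergodic measures $\mu_i$, into its positive and negative parts $\mu^+ := \sum_{c_i > 0} c_i \mu_i$ and $\mu^- := \sum_{c_i < 0} |c_i|\mu_i$, which are transverse measures to sublaminations of $\lambda$ with equal infinitesimal earthquakes; the uniqueness in Theorem \ref{local earthquake theorem} gives $\mu^+ = \mu^-$, and then linear independence of the ergodic measures $\mu_i$ as signed Borel measures forces every $c_i$ to vanish.
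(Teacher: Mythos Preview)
Your proposal is correct and follows essentially the same approach as the paper. For (1), the paper constructs $\lambda$ via a dense sequence $(\alpha_i)$ in $F$, taking $\lambda_i = \bigcup_{j \le i} \supp \alpha_j$ and passing to the Vietoris limit, whereas you take the closure of the full union directly; these amount to the same construction. For (2), the paper asserts without elaboration that $\mathcal E(\mathcal M(\lambda))$ is linearly independent in the $(3g-3)$-dimensional isotropic subspace, while you supply the missing justification: your positive/negative splitting argument, combined with Kerckhoff's uniqueness (Theorem \ref{local earthquake theorem}) and the linear independence of ergodic measures as signed Borel measures, is exactly what is needed to transport linear independence across the infinitesimal earthquake map, and is a genuine clarification of a step the paper leaves implicit.
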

\begin{proof}
Let $(\alpha_i)$ be a dense sequence in $F$.
By Theorem \ref{maximal flats are Lagrangian}, if we let $\lambda_i := \bigcup_{j \leq i} \supp \alpha_j$, then $\lambda_i$ is (the support of) a geodesic lamination such that $\lambda_i \subseteq \lambda_{i + 1}$.
Taking the limit (say, in the Vietoris topology), $\lambda_i$ converges to a lamination $\lambda$ for which $\alpha_i \in \mathcal M(\lambda)$.
Since $\mathcal M(\lambda)$ is compact, $F \subseteq \mathcal M(\lambda)$, and since $F$ is maximal, $\mathcal M(\lambda) \subseteq F$.
By an easy generalization of Theorem \ref{ergodic decomposition thm}, $\mathcal E(F)$ is the set of ergodic measures on $\mathcal M(\lambda)$, so it is linearly independent.
But $\mathcal E(F)$ is contained in a $3g - 3$-dimensional vector space, so $\card \mathcal E(F) \leq 3g - 3$.
\end{proof} 

\begin{conjecture}\label{earthquake conjecture}
For every vector $v$ in the stretch unit sphere of $T_\sigma \mathscr T_g$ there exists a hyperbolic structure $\tau \in \mathscr T_g$ such that $v^*$ is the canonical lamination maximally stretched by the homotopy class of the identity $(\Sigma_g, \sigma) \to (\Sigma_g, \tau)$.
\end{conjecture}

Conjecture \ref{earthquake conjecture} seems quite likely to hold in view of Corollaries \ref{extreme points are indecomposable} and \ref{crly: earthquake vertex counting}.
A natural attempt to prove Conjecture \ref{earthquake conjecture} is to show that there is a diffeomorphism $\exp_\sigma: T_\sigma \mathscr T_g \to \mathscr T_g$ which maps every ray emanating from $0$ in $T_\sigma \mathscr T_g$ to a geodesic ray in $\mathscr T_g$ with respect to Thurston's stretch metric, such that $v^* \subseteq \mathcal M(\lambda_{\sigma, \exp_\sigma(v)})$.
We refer to Pan and Wolf, \cite{pan2022raystructuresteichmullerspace}, for more discussion of exponential maps for Thurston's stretch metric.

\subsection{Higher dimension and codimension}\label{sec: high dimension}
In this paper we only considered cohomology classes in $H^{d - 1}(M, \RR)$ where the manifold $M$ had dimension $d \leq 7$.
However, the definition of the costable norm makes sense in any cohomology group $H^k(M, \RR)$ and so it is natural to ask if Theorem \ref{existence of calibrated lam} holds for $k \leq d - 2$ or $d \geq 8$.

A version of Theorem \ref{existence of calibrated lam} is available when $k = 1$.
Let $\rho \in H^1(M, \RR)$ and $\|\rho\|_\infty = 1$.
By thinking of $\rho$ as a representation $\pi_1(M) \to \RR$, we can identify calibrations in $\rho$ with functions $v$ on $\tilde M$ such that $\Lip(v) = 1$.
From this, and the proof of \cite[Lemma 5.2]{Gu_ritaud_2017}, it follows that there is a unique largest geodesic lamination $\lambda_\rho$ which is calibrated by every calibration in $\rho$.
It seems plausible that one can generalize arguments of \cite[\S6]{daskalopoulos2020transverse}, to establish that $\rho^*$ is the set of homology classes of transverse measures to sublaminations of $\lambda_\rho$, and show that the rational extreme points of $\rho^*$ correspond to closed leaves in $\lambda_\rho$.
However, if $d \geq 3$ then the codimension of $\lambda_\rho$ is too large to hope that any of the intersection-theoretic results of this paper generalize to the $k = 1$ case.

If $2 \leq k \leq d - 2$, then there can be no generalization of Theorem \ref{existence of calibrated lam} to $H^k(M, \RR)$, since the submanifolds calibrated by a class $\rho \in H^k(M, \RR)$ can intersect each other.
For example, let $M = \PP^2_\CC$, let $\rho$ be the K\"ahler class of $\PP^2_\CC$, and let $F$ be the K\"ahler form of $\PP^2_\CC$, which is a calibration by Wirtinger's inequality.
Then every projective algebraic curve in $\PP^2_\CC$ is $F$-calibrated, and so is a leaf of the putative canonical lamination calibrated by $\rho$!

The construction of the canonical calibrated lamination heavily relied on the regularity theory of stable minimal hypersurfaces \cite{Schoen81}, which is only valid when $d \leq 7$.
We outline a possible approach to generalizing this paper to $d \geq 8$.
Let $F$ be a calibration, $\rho := [F]$, $N$ be an $F$-calibrated hypersurface, and $x \in N$.
After throwing away a set of codimension $\geq 8$, we may assume that $x$ is not a singular point of $N$, so that $|\Two_N|$ is bounded near $x$.
Let $N'$ be another $F$-calibrated hypersurface such that $x$ is not a singular point of $N'$.
It should follow from Simon's maximum principle \cite{Simon87} and the proof of Lemma \ref{calibrated implies disjoint} that there exists $\delta > 0$ such that $B(x, \delta) \cap N \cap N' = \emptyset$.

The natural next step in the construction of $\lambda_\rho$ would be to assume towards contradiction that $N'$ oscillates wildly at some point $y \in N' \cap B(x, \delta)$.
By elliptic regularity, and the fact that $N, N'$ are disjoint, one would like to conclude that $N$ also oscillates wildly, contradicting the bound on $|\Two_N|$.
If that argument worked, then we would conclude that $|\Two_{N'}|$ is bounded near $y$.
By Allard's regularity theorem \cite[Theorem 7.8]{WhiteNotes}, that argument \emph{almost} works, but actually only shows that if $|\Two_{N'}|$ is very large at some point close to $y$, then on coarse scales, $N'$ looks like a current of higher multiplicity at $y$.
This is the same mechanism by which a stack of catenoids (Example \ref{ex: stack of catenoids}) fails to be a lamination.

It is possible that stacks of catenoids constitute the worst-case scenario.
Motivated by this, we make two bold conjectures.

\begin{conjecture}
Let $\mathcal S$ be a set of disjoint minimal hypersurfaces in $M$.
Then there is a closed set $Z$ such that $\dim Z \leq d - 3$, and for every $U \Subset M$ there is a Lipschitz lamination $\lambda$ in $U$ such that $\{N \cap U: N \in \mathcal S\}$ is dense in the set of leaves of $\lambda$.
\end{conjecture}

\begin{conjecture}
Let $d \geq 8$, let $\rho \in H^{d - 1}(M, \RR)$, assume that $\|\rho\|_\infty = 1$, and let $\mathcal S$ be the set of minimal hypersurfaces which are calibrated by every calibration in $\rho$.
Then there is a closed set $Z$ such that $\dim Z \leq d - 3$ and, for every $U \Subset M \setminus Z$, there is a Lipschitz lamination $\lambda$ in $U$ such that $\{N \cap U: N \in \mathcal S\}$ is dense in the set of leaves of $\lambda$.
\end{conjecture}

The results of \S\ref{canonical structure} should not be able to detect the presence of a codimension $\geq 3$ singular set, so we expect them to generalize ceteris paribus to $d \geq 8$.

\printbibliography

\end{document}